\newcommand*{\mailto}[1]{\href{mailto:#1}{\nolinkurl{#1}}}
\numberwithin{equation}{section}
\newtheorem{re}{Remark}[section]
\newtheorem{prop}{Proposition}[section]
\newtheorem{theo}{Theorem}[section]
\newtheorem{lem}{Lemma}[section]
\newtheorem{col}{Corollary}[section]
\newtheorem{defi}{Definition}
\newcommand{\be}{\begin{equation}}
\newcommand{\ee}{\end{equation}}
\newcommand\bes{\begin{eqnarray}} \newcommand\ees{\end{eqnarray}}
\newcommand{\bess}{\begin{eqnarray*}}
\newcommand{\eess}{\end{eqnarray*}}
\newcommand{\D}{\displaystyle}
\def\XXint#1#2#3{{\setbox0=\hbox{$#1{#2#3}{\int}$}
     \vcenter{\hbox{$#2#3$}}\kern-.5\wd0}}
\numberwithin{equation}{section}
\begin{document}

\title[global existence and incompressible limit for flow of liquid crystals]{Global existence and incompressible limit in critical spaces for compressible flow of liquid crystals}

\author[Q. Bie]{Qunyi Bie}
\address{College of Science, China Three Gorges University, Yichang 443002, PR China}
\email{\mailto{qybie@126.com}}

\author[H. Cui]{Haibo Cui}
\address{School of Mathematical Sciences, Huaqiao University, Quanzhou 362021,
 PR China}
\email{\mailto{cuihaibo2000@163.com}}

\author[Q. Wang]{Qiru Wang$^{*}$}
\address{School of Mathematics and Computational Science, Sun Yat-Sen University, Guangzhou 510275, PR China}
\email{\mailto{mcswqr@mail.sysu.edu.cn}}
\author[Z.-A. Yao]{Zheng-an Yao}
\address{School of Mathematics and Computational Science, Sun Yat-Sen University, Guangzhou 510275, PR China}
\email{\mailto{mcsyao@mail.sysu.edu.cn}}


\thanks{Research Supported by the NNSF of China (nos.\,11271379, 11271381), the National Basic Research Program of China (973 Program) (Grant No.\,2010CB808002) and the Science Foundation of China Three Gorges University (no. KJ2013B030).}
\thanks{$^*$ Corresponding author, email: mcswqr@mail.sysu.edu.cn, tel.: 86-20-84037100, fax: 86-20-84037978.
}
\keywords{Liquid crystal flow; global well-posedness; critical space; incompressible limit}
\subjclass[2010]{35Q35, 76N10, 35B40.}

\begin{abstract}
The Cauchy problem for  the compressible flow of nematic liquid crystals in the framework of critical spaces is considered. We first establish the existence and uniqueness of global solutions provided that the initial data are close to some equilibrium states. This result improves the work by Hu and Wu [SIAM J. Math. Anal., 45 (2013), pp. 2678-2699] through relaxing the regularity requirement of the initial data in terms of the director field. We then  consider the incompressible limit problem for ill prepared initial data. We prove that as the Mach number tends to zero, the global solution to the compressible flow of liquid crystals converges to the solution to the corresponding incompressible model in some function spaces. Moreover, the accurate converge rates are obtained.

\end{abstract}

\maketitle
\tableofcontents
\section{Introduction and main results}\setcounter{equation}{0}\setcounter{section}{1}\indent
In this paper we consider the global well-posedness and incompressible limit to the following compressible  flow of nematic liquid crystals in critical spaces:
 \begin{equation}\label{1.1}
 \left\{\begin{array}{ll}\medskip\displaystyle\partial_t \rho+{\rm div}(\rho {\bf u})
=0,\\
 \medskip\displaystyle\partial_t(\rho{\bf u})+{\rm div}(\rho{\bf u}\otimes{\bf u})-\mu\Delta{\bf u}-(\mu+\lambda)\nabla{\rm div}{\bf u}+\nabla P(\rho)\\
 \medskip\quad=-\xi{\rm div}{\Big(}\nabla{\bf d}\odot\nabla{\bf d}-\frac{1}{2}|\nabla{\bf d}|^2{\bf I}{\Big)},\\
 \displaystyle\partial_t{\bf d}+{\bf u}\cdot\nabla{\bf d}=\theta(\Delta{\bf d}+|\nabla{\bf d}|^2{\bf d}),
 \end{array}
 \right.
 \end{equation}
where $\rho\in \mathbb{R}$ is the density function of the fluid, ${\bf u}\in \mathbb{R}^N (N\geq 2)$ is the velocity, and ${\bf d}\in \mathbb {S}^{N-1}$ represents the director field for the averaged macroscopic molecular orientations. The scalar function $P\in\mathbb{R}$ is the pressure, which is an increasing and convex function in $\rho$. We denote by $\lambda$ and $\mu$ the two Lam\'{e} coefficients of the fluid, which are constant and satisfy $\mu>0$ and $\nu:=\lambda+2\mu>0$. Such a condition ensures ellipticity for the operator $\mu\Delta+(\lambda+\mu)\nabla{\rm div}$ and is satisfied in the physical cases. The constants $\xi>0$, $\theta>0$ stand for the competition between the kinetic energy and the potential energy, and the microscopic elastic relaxation time (or the Debroah number) for the molecular orientation field, respectively. The symbol $\otimes$ denotes the Kronecker tensor product such that ${\bf u}\otimes{\bf u}=({\bf u}_i{\bf u}_j)_{1\leq i,j \leq N}$
and the term $\nabla{\bf d}\odot\nabla{\bf d}$ denotes a matrix whose $(i,j)-$th entry is $\partial_{x_i}{\bf d}\cdot\partial_{x_j}{\bf d}~(1\leq i,j\leq N)$. ${\bf I}$ is the $N\times N$ identity matrix. To complete the system \eqref{1.1}, the initial data are given by
\be\label{1.11}
\rho|_{t=0}=\rho_0(x), ~~{\bf u}|_{t=0}={\bf u}_0(x), ~~{\bf d}|_{t=0}={\bf d}_0(x),\,\,  {\rm with}\,\,\,  {\bf d}_0\in\mathbb{S}^{N-1}.
\ee

The hydrodynamic theory of liquid crystals was first proposed by Ericksen \cite{ericksen1961conservation, ericksen1962hydrostatic} and Leslie \cite{leslie1968some} in 1960s. In 1989, Lin \cite{lin1989nonlinear} first derived a simplified Ericksen-Leslie equation modeling
liquid crystal flows when the fluid is an incompressible and viscous fluid. Subsequently, Lin and Liu \cite{lin1995nonparabolic} showed the global existence of weak solutions and smooth solutions for the approximation system. Recently, Hong \cite{hong2011global} and Lin et al.\,\cite{lin2010liquid} showed independently the global existence of a weak solution of an incompressible model of system \eqref{1.1}
in two-dimensional space. Furthemore, in \cite{lin2010liquid}, the regularity of solutions except for a countable set of singularities whose projection on the time axis is a finite set had been obtained. Very recently, in dimension three, Lin and Wang \cite{lin2015global} have proved the existence of global weak solutions  under the assumption that ${\bf d}_0\in \mathbb{S}_+^2$ by developing some new compactness arguments, here $\mathbb{S}_+^2$ is the upper hemisphere.

 As for the compressible case, Huang et al.\,\cite{huang2012strong} proved the local existence of unique strong solution of \eqref{1.1} provided that the initial data $\rho_0, {\bf u}_0, {\bf d}_0$ are sufficiently regular and satisfy a natural compatibility condition. And a criterion for possible breakdown of such a local strong solution at finite time was given in terms of blow up of the quantities $\|\rho\|_{L_t^\infty L_x^\infty}$ and $\|\nabla{\bf d}\|_{L_t^3L_x^\infty}$. In \cite{huang2012blow}, an alternative blow-up criterion was derived in terms of the temporal integral of both the $L^\infty$-norm of the deformation
tensor $\mathcal{D}{\bf u}$ and the square of the $L^\infty$-norm of $\nabla{\bf d}$. In terms of the global well-posedness, results in one dimensional space have been obtained in \cite{ding2012compressible,ding2011weak}. In two dimensions, Jiang et al.\,\cite{jiang2014global} established the global existence of weak solutions under the small initial energy. In two or three dimensions, if some component of initial direction field is small, Jiang et al.\,\cite{jiang2013on} established the global existence of weak solutions to the initial-boundary problem with large initial energy and without any smallness condition on the initial density and velocity.
Recently, Lin et al.\,\cite{lin2014global}
 established the existence of global weak solutions in three-dimensional space, provided the initial orientational director field ${\bf d}_0$ lies in the hemisphere $\mathbb{S}_+^2$.

The low Mach number limit of the system \eqref{1.1}-\eqref{1.11} has also been studied recently. Hao and Liu\,\cite{hao2013incompressible} investigated  the so-called incompressible limit (i.e., the low Mach number limit) for
solutions in the whole space $\mathbb{R}^N (N = 2, 3)$ and a bounded domain of $\mathbb{R}^N (N = 2, 3)$ with Dirichlet boundary conditions.
Ding et al.\,\cite{ding2013incompressible} studied the incompressible limit with periodic boundary conditions in $\mathbb{R}^N (N = 2, 3)$.
Wang and Yu\,\cite{wang2014incompressible} proved the incompressible limit for weak solutions in a bounded domain. For more about the incompressible limit problem, one can refer to \cite{jiang2010incompressible, lions1998incompressible,ou2011low} and the references therein.

Let us mention that all of the above results were performed in the framework of Sobolev spaces. Inspired by\,\cite{danchin2000global} for the compressible Navier-Stokes equations, it is natural to study the system \eqref{1.1}-\eqref{1.11} in  critical Besov spaces. We observe  that the system \eqref{1.1} is invariant by the transformation
\be\label{1.9}
\tilde{\rho}=\rho(l^2t, lx),~~\tilde{\bf u}=l{\bf u}(l^2t, lx),~~\tilde{\bf d}={\bf d}(l^2t, lx)
\ee
 up to a change of the pressure law $\tilde{P}=l^2P$. A critical space is a space in which the norm is invariant under the scaling
$$
(\tilde{e},\tilde{\bf f}, \tilde{\bf g})(x)=(e(lx), l{\bf f}(lx), {\bf g}(lx)).
$$
 Very recently, in the case $N=3$, Hu and Wu \cite{hu2013global} studied the global strong solution to \eqref{1.1}-\eqref{1.11} in critical Besov spaces provided that the initial datum is close to an equilibrium state $(1,0,\hat{\bf d})$ with a constant vector $\hat{\bf d}\in \mathbb{S}^2$. More precisely, there exist two positive constants $\eta_0$ and $\Gamma_0$ such that if $\rho_0-1\in\tilde{B}_\nu^{\frac{1}{2},\infty}(\mathbb{R}^3)$, ${\bf u}_0\in \dot{B}_{2,1}^{\frac{1}{2}}(\mathbb{R}^3)$, and ${\bf d}_0-\hat{\bf d}\in\tilde{B}_\nu^{\frac{1}{2},\infty}(\mathbb{R}^3)$ satisfy
\be\label{1.6}
\|\rho_0-1\|_{\tilde{B}_\nu^{\frac{1}{2},\infty}}+\|{\bf u}_0\|_{\dot{B}_{2,1}^{\frac{1}{2}}}+\|{\bf d}_0-\hat{\bf d}\|_{\tilde{B}_\nu^{\frac{1}{2},\infty}}\leq \eta_0,
\ee
then system \eqref{1.1}-\eqref{1.11} has a unique global strong solution $(\rho, {\bf u}, {\bf d})$ with
\be\label{1.7}
\left.\begin{array}{ll}\medskip\D
\rho-1\in L^1(\mathbb{R}^+; \tilde{B}_\nu^{\frac{3}{2},1})\cap C(\mathbb{R}^+; \tilde{B}_\nu^{\frac{3}{2},\infty}),\\
\medskip\D  {\bf u}\in \left( L^1(\mathbb{R}^+; \dot{B}_{2,1}^{\frac{5}{2}})\cap C(\mathbb{R}^+; \dot{B}_{2,1}^{\frac{1}{2}})\right)^{3}, \\
\D {\bf d}-\hat{\bf d}\in\left(L^1(\mathbb{R}^+; \tilde{B}_\nu^{\frac{7}{2},\infty})\cap C(\mathbb{R}^+; \tilde{B}_\nu^{\frac{3}{2},\infty})\right)^3
\end{array}
\right.
\ee
satifying
\be\label{1.8}
\begin{split}
&\|\rho-1\|_{L^\infty(\mathbb{R}^+; \tilde{B}_\nu^{\frac{3}{2},\infty})}
+\|{\bf u}\|_{L^\infty(\mathbb{R}^+; \dot{B}_{2,1}^{\frac{1}{2}})}
+\|{\bf d}-\hat{\bf d}\|_{L^\infty(\mathbb{R}^+; \tilde{B}_\nu^{\frac{3}{2},\infty})}\\[2mm]
&\quad+\|\rho-1\|_{L^1(\mathbb{R}^+; \tilde{B}_\nu^{\frac{3}{2},1})}
+\|{\bf u}\|_{L^\infty(\mathbb{R}^+; \dot{B}_{2,1}^{\frac{5}{2}})}
+\|{\bf d}-\hat{\bf d}\|_{L^\infty(\mathbb{R}^+; \tilde{B}_\nu^{\frac{7}{2},\infty})}\leq \Gamma_0\eta_0.
\end{split}
\ee
Here $\dot{B}_{p,r}^s$ and $\tilde{B}_\nu^{s_1, r}$ denote  the homogeneous Besov space and  hybrid Besov space, respectively. We are going to explain these notations in Section 2.

The purpose of this paper includes the following two aspects:

 On one hand, we establish global strong solutions to the Cauchy problem of \eqref{1.1}-\eqref{1.11} in critical Besov spaces with initial data close to a stable equilibrium.  From \cite{hu2013global}, when $N=3$ and \eqref{1.6} holds true, the system \eqref{1.1}-\eqref{1.11} has a unique global strong solution $(\rho-1, {\bf u}, {\bf d}-\hat{\bf d})$ satisfying \eqref{1.8}. Concerning the global well-posedness with respect to ${\bf d}$, we carry out in the framework of critical Besov space $\dot{B}_{2,1}^{\frac{3}{2}}$ (if $N=3$) but not the hybrid Besov space $\tilde{B}_\nu^{\frac{3}{2},\infty}$ in \cite{hu2013global}.  Since $\tilde{B}_\nu^{\frac{3}{2},\infty}
\approx\dot{B}_{2,1}^{\frac{1}{2}}\cap\dot{B}_{2,1}^{\frac{3}{2}}
\subset\dot{B}_{2,1}^{\frac{3}{2}}$, the regularity requirement of the initial data in terms of the director field ${\bf d}_0-\hat{\bf d}$ is relaxed. The key point is that, different from the estimate of ${\bf d}$ in hybrid Besov space in \cite[Proposition 4.1]{hu2013global}, we make use of  the estimate of ${\bf d}$ in the homogeneous Besov space $\dot{B}_{2,1}^{\frac{3}{2}}$ (see Proposition \ref{pr3.2} below when $N=3$). In addition,  the global estimates of a linear hyperbolic-parabolic system given by Danchin \cite{danchin2000global} (see also \cite[Chapter 10]{bahouri2011fourier}) play an important role.

On the other hand, we give the rigorous justification of the convergence of the incompressible limit for global strong solutions to the compressible equations of liquid crystals when the initial data are ill prepared and small in a critical space. Meanwhile, the accurate converge rates are obtained. Our proof follows the ideas of Danchin \cite{danchin2002zero} and the key point is to use some dispersive inequalities for the wave equation: the so-called Strichartz estimates (see e.g., \cite{ginibre1995generalized, keel1998endpoint, strichartz1977restrictions} and the references therein).

We would like to point out that \cite{desjardins1999low} is the first paper devoted to the incompressible limit problem where Strichartz estimates have been used. In the spirit of \cite{desjardins1999low}, Danchin \cite{danchin2002zero} studied the zero Mach number limit in critical spaces for barotropic compressible Navier-Stokes equations. Fang and Zi \cite{fang2014incompressible} investigated the incompressible limit of Oldroyd-B fluids in the whole space.

Before presenting the main statements of this paper, we introduce the following function space:
\be\nonumber
\begin{split}
\mathfrak{B}_\nu^s(T)={\Big\{}(e, {\bf f}, {\bf g})&\in {\Big(}L^1(0, T; \tilde{B}_\nu^{s,1})\cap C([0,T];\tilde{B}_\nu^{s,\infty}){\Big)}\\
&\quad\times{\Big(}L^1(0, T; \dot{B}_{2,1}^{s+1})\cap C([0, T];\dot{B}_{2,1}^{s-1}){\Big)}^N\\
&\quad\times{\Big(}L^1(0, T; \dot{B}_{2,1}^{s+2})\cap C([0, T];\dot{B}_{2,1}^{s}){\Big)}^N{\Big\}}
\end{split}
\ee
and
\be\nonumber
\begin{split}
\|(e, {\bf f}, {\bf g})\|_{\mathfrak{B}_\nu^s(T)}&=\|e\|_{L_T^\infty(\tilde{B}_\nu^{s,\infty})}+\|{\bf f}\|_{L_T^\infty(\dot{B}_{2,1}^{s-1})}+\|{\bf g}\|_{L_T^\infty(\dot{B}_{2,1}^{s})}\\[2mm]
&\quad+\nu\|e\|_{L_T^1(\tilde{B}_\nu^{s,1})}+\underline{\nu}\|{\bf f}\|_{L_T^1(\dot{B}_{2,1}^{s+1})}+\theta\|{\bf g}\|_{L_T^1(\dot{B}_{2,1}^{s+2})}.
\end{split}
\ee
Here $T>0$, $s\in \mathbb{R}$, $\nu:=\lambda+2\mu$\,\,and\,\,$\underline{\nu}:=\min(\mu, \lambda+2\mu)$. We use the notation $\mathfrak{B}_\nu^s$ if $T=+\infty$ by changing the interval $[0, T]$ into $[0, \infty)$ in the definition above.

Our first result of this paper reads as follows.
 \begin{theo}\label{th1.1}
 Let $\hat{{\bf d}}\in \mathbb{R}^N$ be an arbitrary constant unit vector, and assume that $P^\prime(1)=1$.  There exist two positive constants $\eta$ and $\Gamma$ such that if $\rho_0-1\in\tilde{B}_\nu^{\frac{N}{2},\infty}, ~{\bf u}_0\in\dot{B}_{2,1}^{\frac{N}{2}-1}$ and ~${\bf d}_0-\hat{\bf d}\in\dot{B}_{2,1}^{\frac{N}{2}}$ satisfy
 \be
 \label{1.10}
 \|\rho_0-1\|_{\tilde{B}_\nu^{\frac{N}{2},\infty}}+\|{\bf u}_0\|_{\dot{B}_{2,1}^{\frac{N}{2}-1}}+\|{\bf d}_0-\hat{{\bf d}}\|_{\dot{B}_{2,1}^{\frac{N}{2}}}\leq \eta,
 \ee
then the following results hold true:

{\rm (i)}\, System \eqref{1.1}-\eqref{1.11} has a global strong solution $(\rho, {\bf u}, {\bf d})$ with $(\rho-1, {\bf u}, {\bf d}-\hat{{\bf d}})$ in $\mathfrak{B}_\nu^{\frac{N}{2}}$ satisfying
\be\label{1.12}
\|(\rho-1, {\bf u}, {\bf d}-\hat{{\bf d}})\|_{\mathfrak{B}_\nu^{\frac{N}{2}}}\leq
\Gamma{\Big(} \|\rho_0-1\|_{\tilde{B}_\nu^{\frac{N}{2},\infty}}+\|{\bf u}_0\|_{\dot{B}_{2,1}^{\frac{N}{2}-1}}+\|{\bf d}_0-\hat{{\bf d}}\|_{\dot{B}_{2,1}^{\frac{N}{2}}}{\Big)}.
\ee

{\rm (ii)}~Uniqueness holds in $\mathfrak{B}_\nu^{\frac{N}{2}}$ if $N\geq 3$ and in
$\mathfrak{B}_\nu^{1}\cap\mathfrak{B}_\nu^{s}\,(1<s<2)$ if $N=2$.
 \end{theo}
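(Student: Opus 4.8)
The plan is to prove Theorem \ref{th1.1} by a standard Friedrichs-type approximation scheme combined with compactness, following the strategy Danchin developed for the compressible Navier--Stokes equations in \cite{danchin2000global} and adapted here to accommodate the extra director field equation. First I would reformulate the system near the equilibrium $(1,0,\hat{\bf d})$: set $a=\rho-1$, keep ${\bf u}$, and set ${\bf e}={\bf d}-\hat{\bf d}$. Writing out \eqref{1.1} in these variables produces a linear part --- a damped hyperbolic equation for $a$ coupled with the Lam\'e operator acting on ${\bf u}$, plus a heat equation $\partial_t{\bf e}-\theta\Delta{\bf e}=\cdots$ for the director --- and a collection of quadratic (and higher) nonlinear terms: transport terms ${\bf u}\cdot\nabla a$, ${\bf u}\cdot\nabla{\bf e}$, the convective term $\operatorname{div}(\rho{\bf u}\otimes{\bf u})$, the pressure remainder $(P'(\rho)-P'(1))\nabla a$, the capillary stress $\operatorname{div}(\nabla{\bf e}\odot\nabla{\bf e}-\frac12|\nabla{\bf e}|^2{\bf I})$, the quasilinear corrections $\frac{a}{1+a}(\mu\Delta{\bf u}+(\mu+\lambda)\nabla\operatorname{div}{\bf u})$, and the Ginzburg--Landau term $\theta|\nabla{\bf e}|^2({\bf e}+\hat{\bf d})$.

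The core of the argument is an a priori estimate in the space $\mathfrak{B}_\nu^{N/2}$. For the $(a,{\bf u})$ block I would invoke the global estimate for the linear hyperbolic--parabolic system from Danchin \cite{danchin2000global} (equivalently \cite[Chapter 10]{bahouri2011fourier}), which controls $\|a\|_{L^\infty_T(\tilde B_\nu^{N/2,\infty})}+\|{\bf u}\|_{L^\infty_T(\dot B_{2,1}^{N/2-1})}+\nu\|a\|_{L^1_T(\tilde B_\nu^{N/2,1})}+\underline\nu\|{\bf u}\|_{L^1_T(\dot B_{2,1}^{N/2+1})}$ by the data plus the source terms, crucially without loss of regularity thanks to the hybrid-Besov smoothing of the density. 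For ${\bf e}$ I would use the standard maximal-regularity estimate for the heat semigroup in $\dot B_{2,1}^{s}$ (Proposition \ref{pr3.2}, in the $\dot B_{2,1}^{N/2}$ setting rather than the hybrid space of \cite{hu2013global}), giving $\|{\bf e}\|_{L^\infty_T(\dot B_{2,1}^{N/2})}+\theta\|{\bf e}\|_{L^1_T(\dot B_{2,1}^{N/2+2})}$ in terms of the data plus the transport and Ginzburg--Landau sources. Then I would estimate every nonlinear term using the product/composition laws in Besov spaces --- the paracalculus bounds $\|fg\|_{\dot B_{2,1}^{N/2-1}}\lesssim\|f\|_{\dot B_{2,1}^{N/2-1}}\|g\|_{\dot B_{2,1}^{N/2}}$ and the tame estimates $\|F(a)\|_{\dot B_{2,1}^{s}}\lesssim C(\|a\|_{L^\infty})\|a\|_{\dot B_{2,1}^{s}}$ --- so that the right-hand side becomes at least quadratic in $\|(a,{\bf u},{\bf e})\|_{\mathfrak{B}_\nu^{N/2}}$. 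A continuation/bootstrap argument on the time interval then shows that if the data satisfy \eqref{1.10} with $\eta$ small enough, the solution norm stays below $\Gamma\eta$ for all time, yielding \eqref{1.12}.

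For existence, I would build approximate solutions $(a_n,{\bf u}_n,{\bf e}_n)$ by the Friedrichs scheme (spectral truncation of the data and of the nonlinearities to frequencies in an annulus), for which each truncated system is a globally solvable ODE in a Banach space; the uniform a priori bound above applies to each $n$ (with $\eta$ independent of $n$), giving a sequence bounded in $\mathfrak{B}_\nu^{N/2}$. Then I extract a limit by compactness: the uniform bounds give weak-$*$ limits, and one upgrades to strong convergence in, say, $C([0,T];\dot B_{2,1}^{N/2-1-\varepsilon}_{\mathrm{loc}})$ using Aubin--Lions together with time-derivative bounds read off from the equations, which suffices to pass to the limit in all the nonlinear terms. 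Continuity in time with values in the endpoint spaces follows from the $L^1_T$ control of the time derivatives in the appropriate norms. For uniqueness I would estimate the difference of two solutions: when $N\ge3$ the difference can be bounded at the same regularity $\mathfrak{B}_\nu^{N/2}$ using a Gronwall argument (the density difference lives in $\dot B_{2,1}^{N/2-1}$, obtained via logarithmic interpolation of the hybrid bound, and all products close since $N/2-1>0$); when $N=2$ the product laws at the critical index $0$ fail, so one works one derivative lower, in $\mathfrak{B}_\nu^{1}$, and recovers the missing information from the stated extra regularity $\mathfrak{B}_\nu^{s}$, $1<s<2$, exactly as in Danchin's two-dimensional uniqueness proof.

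The main obstacle, and the point where this paper differs from \cite{hu2013global}, is the director-field estimate: one must close the nonlinear terms $\operatorname{div}(\nabla{\bf e}\odot\nabla{\bf e})$ and $\theta|\nabla{\bf e}|^2({\bf e}+\hat{\bf d})$ using only ${\bf e}\in L^\infty_T(\dot B_{2,1}^{N/2})\cap L^1_T(\dot B_{2,1}^{N/2+2})$, rather than the stronger hybrid control. The quadratic gradient terms are borderline --- $\nabla{\bf e}\odot\nabla{\bf e}$ sits in $\dot B_{2,1}^{N/2-1}$ only by pairing one factor in $\dot B_{2,1}^{N/2}$ with the other in $\dot B_{2,1}^{N/2-1}$, which after taking the divergence demands $\dot B_{2,1}^{N/2+1}$ regularity on one factor, available only in $L^2_T$ by interpolating the endpoint bounds; the same $L^2_tL^2_t$ Cauchy--Schwarz-in-time structure handles $|\nabla{\bf e}|^2$. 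Verifying that these interpolated quantities are genuinely controlled by $\|{\bf e}\|_{\mathfrak{B}_\nu^{N/2}}$, and that the resulting inequality is still quadratic (so the smallness closes), is the delicate technical heart of the proof; the hyperbolic--parabolic part for $(a,{\bf u})$ is essentially Danchin's and the heat part for ${\bf e}$ is classical once the source terms are bounded.
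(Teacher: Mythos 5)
Your proposal follows essentially the same route as the paper: Danchin's linear hyperbolic--parabolic estimate for $(\rho-1,{\bf u})$, a critical $\dot B_{2,1}^{N/2}$ parabolic estimate for ${\bf d}-\hat{\bf d}$ with the quadratic gradient terms closed by the $L^2_t(\dot B_{2,1}^{N/2+1})$ interpolation you describe, product/composition laws making the right-hand side at least quadratic, a bootstrap for global smallness, Friedrichs approximation for local existence, and a Gronwall difference estimate for uniqueness with the $N=2$ case handled at the auxiliary regularity. The one imprecision is that for $N\ge 3$ the uniqueness estimate is closed one derivative lower, in $\mathfrak{B}_\nu^{N/2-1}(T)$, not ``at the same regularity $\mathfrak{B}_\nu^{N/2}$'' (the term $\delta{\bf u}\cdot\nabla b_1$ in the mass equation forces this loss), which is exactly the Danchin scheme you invoke and does not affect the argument.
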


 \begin{re}\label{re1.1} {\rm Since $P(\rho)$ is an increasing convex function of $\rho$, we assume $P^\prime(1)=1$ for simplicity. The general barotropic case $P^\prime(1)>0$ can be verified by a slight modification of the argument below.}
\end{re}

Recall that the Mach number for the compressible flow \eqref{1.1} is defined as:
$$
M=\frac{|{\bf u}|}{\sqrt{P^\prime(\rho)}}.
$$
Thus, letting $M$ approach zero, we hope that $\rho, {\bf d}$ keep a typical size $1$, and ${\bf u}$ is of order $\epsilon$, where $\epsilon\in(0,1)$ is a small parameter. As in \cite{lions1998mathematical}, we scale $\rho, {\bf u}$ and ${\bf d}$ in the following way:
$$
\rho=\rho^\epsilon(\epsilon t, x), \,\,\,\,  {\bf u}=\epsilon {\bf u}^\epsilon(\epsilon t, x),\,\,\,\,
{\bf d}={\bf d}^\epsilon(\epsilon t, x),
$$
and we take the viscosity coefficients as:
$$
\mu=\epsilon\mu^\epsilon, \,\,\,\, \lambda=\epsilon \lambda^\epsilon, \,\,\,\,
\xi=\epsilon^2\xi^\epsilon, \,\,\,\, \theta=\epsilon\theta^\epsilon.
$$
Under this scaling, system \eqref{1.1}-\eqref{1.11} becomes
\begin{equation}\label{1.2}
 \left\{\begin{array}{ll}\medskip\displaystyle\partial_t \rho^\epsilon+{\rm div}(\rho^\epsilon {\bf u}^\epsilon)
=0,\\
 \medskip\displaystyle\partial_t(\rho^\epsilon{\bf u}^\epsilon)+{\rm div}(\rho^\epsilon{\bf u}^\epsilon\otimes{\bf u}^\epsilon)-\mu^\epsilon\Delta{\bf u}^\epsilon-(\mu^\epsilon+\lambda^\epsilon)\nabla{\rm div}{\bf u}^\epsilon+\frac{\nabla P(\rho^\epsilon)}{\epsilon^2}\\
 \medskip\quad=-\xi^\epsilon{\rm div}{\Big(}\nabla{\bf d}^\epsilon\odot\nabla{\bf d}^\epsilon-\frac{1}{2}|\nabla{\bf d}^\epsilon|^2{\bf I}{\Big)},\\[2mm]
 \medskip\displaystyle\partial_t{\bf d}^\epsilon+{\bf u}^\epsilon\cdot\nabla{\bf d}^\epsilon=\theta^\epsilon(\Delta{\bf d}^\epsilon+|\nabla{\bf d}^\epsilon|^2{\bf d}^\epsilon),\\[2mm]
 \D (\rho^\epsilon, {\bf u}^\epsilon, {\bf d}^\epsilon)|_{t=0}
 =(\rho_0^\epsilon, {\bf u}_0^\epsilon, {\bf d}_0^\epsilon).
 \end{array}
 \right.
 \end{equation}
For the simplicity of notations and presentation, we shall assume that $\mu^\epsilon, \lambda^\epsilon, \xi^\epsilon$ and $\theta^\epsilon$ are constants, independent of $\epsilon$, and still denote them as $\mu, \lambda, \xi$ and $\theta$ with an abuse of notations.

Formally, we get by letting $\epsilon\rightarrow 0$ the following incompressible model
\begin{equation}\label{1.3}
 \left\{\begin{array}{ll}
 \medskip\partial_t{\bf u}+{\bf u}\cdot\nabla{\bf u}+\nabla\pi=\mu\Delta{\bf u}-\xi{\rm div}(\nabla{\bf d}\odot\nabla{\bf d}),\\
 \medskip\displaystyle\partial_t{\bf d}+{\bf u}\cdot\nabla{\bf d}=\theta(\Delta{\bf d}+|\nabla{\bf d}|^2{\bf d}),\\
 \medskip\displaystyle {\rm div}{\bf u}=0,\\
 ({\bf u}, {\bf d})|_{t=0}=({\bf u}_0, {\bf d}_0).
 \end{array}
 \right.
 \end{equation}
Thus, roughly speaking, it is also reasonable to expect from the mathematical point of view that the global strong solutions to \eqref{1.2} converge in suitable functional spaces to the global strong solutions of \eqref{1.3} as $\epsilon\rightarrow 0$, and the hydrostatic pressure $\pi$ in the first equation of \eqref{1.3} is the limit of $\frac{ P(\rho^\epsilon)}{\epsilon^2}-\frac{\xi^\epsilon}{2}|\nabla{\bf d}^\epsilon|^2$. Our second goal is devoted to the rigorous justification of the convergence of the above incompressible
limit in the whole space. We remark that the existence of global strong solutions to the incompressible flow of liquid crystals \eqref{1.3} in critical Besov space was established in Xu et al.\,\cite{Xu2014well-posedness}.

As in \cite{danchin2002zero}, we want to consider so-called {\it ill prepared data} of the form $\rho_0^\epsilon=1+\epsilon b_0^\epsilon$, ${\bf u}_0^\epsilon$ and ${\bf d}_0^\epsilon$,  where $(b_0^\epsilon, {\bf u}_0^\epsilon, {\bf d}_0^\epsilon)$ are bounded in a sense that will be specified later on.  Setting $\rho^\epsilon=1+\epsilon b^\epsilon$, it is easy to check that $(b^\epsilon, {\bf u}^\epsilon, {\bf d}^\epsilon)$ satisfies
\begin{equation}\label{1.4}
 \left\{\begin{array}{ll}\medskip\displaystyle\partial_t b^\epsilon+\frac{{\rm div}{\bf u}^\epsilon}{\epsilon}=-{\rm div}(b^\epsilon{\bf u}^\epsilon),\\[2mm]
 \medskip\displaystyle\partial_t{\bf u}^\epsilon+{\bf u}^\epsilon\cdot\nabla{\bf u}^\epsilon-\frac{\mu\Delta{\bf u}^\epsilon+(\mu+\lambda)\nabla{\rm div}{\bf u}^\epsilon}{1+\epsilon b^\epsilon}+\frac{P^\prime(1+\epsilon b^\epsilon)}{1+\epsilon b^\epsilon}\frac{\nabla b^\epsilon}{\epsilon}\\[2mm]
 \medskip\quad\D=\frac{-\xi}{1+\epsilon b^\epsilon}{\rm div}{\Big(}\nabla{\bf d}^\epsilon\odot\nabla{\bf d}^\epsilon-\frac{1}{2}|\nabla{\bf d}^\epsilon|^2{\bf I}{\Big)},\\[2mm]
 \displaystyle\partial_t{\bf d}^\epsilon+{\bf u}^\epsilon\cdot\nabla{\bf d}^\epsilon=\theta(\Delta{\bf d}^\epsilon+|\nabla{\bf d}^\epsilon|^2{\bf d}^\epsilon),\\[2mm]
 \D (b^\epsilon, {\bf u}^\epsilon, {\bf d}^\epsilon)|_{t=0}
 =(b_0^\epsilon, {\bf u}_0^\epsilon, {\bf d}_0^\epsilon).
 \end{array}
 \right.
 \end{equation}

Our second result of the paper can be stated as follows.
\begin{theo}\label{th1.2}
Denote $\mathcal{Q}:=\nabla\Delta^{-1}{\rm div}$ and $\mathcal{P}:=I-\mathcal{Q}$. Let $\hat{{\bf d}}\in \mathbb{R}^N$ be an arbitrary constant unit vector. There exist two positive constants $c$ and $M$ such that if\, $b_0^\epsilon\in\tilde{B}_\nu^{\frac{N}{2},\infty}, ~{\bf u}_0^\epsilon\in\dot{B}_{2,1}^{\frac{N}{2}-1}$ and ~${\bf d}_0^\epsilon-\hat{\bf d}\in\dot{B}_{2,1}^{\frac{N}{2}}$ satisfy {\rm(}for all $0<\epsilon\leq \epsilon_0${\rm)}
\be\label{1.5}
C_0^{\epsilon\nu}:=\|b_0^\epsilon\|_{\dot{B}_{2,1}^{\frac{N}{2}-1}}+\epsilon\nu
\|b_0^\epsilon\|_{\dot{B}_{2,1}^{\frac{N}{2}}}+\|{\bf u}_0^\epsilon\|_{\dot{B}_{2,1}^{\frac{N}{2}-1}}
+\|{\bf d}_0^\epsilon-\hat{\bf d}\|_{\dot{B}_{2,1}^{\frac{N}{2}}}\leq c,
\ee
then the following results hold:

\medskip
1.  Existence:
\begin{itemize}
  \item  System \eqref{1.4} has a solution in $\mathfrak{B}_{\epsilon\nu}^{\frac{N}{2}}$ such that for all $0<\epsilon\leq\epsilon_0$,
$$
\|(b^\epsilon, {\bf u}^\epsilon, {\bf d}^\epsilon-\hat{\bf d})\|_{\mathfrak{B}_{\epsilon\nu}^{\frac{N}{2}}}\leq M C_0^{\epsilon\nu}.
$$
 \item System \eqref{1.3} has a unique solution such that
 \be\nonumber
 \begin{split}
 &\|{\bf u}\|_{\tilde{L}^\infty(\dot{B}_{2,1}^{\frac{N}{2}-1})}
+\|{\bf u}\|_{{L}^1(\dot{B}_{2,1}^{\frac{N}{2}+1})}
+\|{\bf d}-\hat{\bf d}\|_{\tilde{L}^\infty(\dot{B}_{2,1}^{\frac{N}{2}})}
+\|{\bf d}-\hat{\bf d}\|_{{L}^1(\dot{B}_{2,1}^{\frac{N}{2}+2})}\\[2mm]
&\quad \leq M{\Big(}\|{\bf u}_0\|_{\dot{B}_{2,1}^{\frac{N}{2}-1}}
+\|{\bf d}_0-\hat{\bf d}\|_{\dot{B}_{2,1}^{\frac{N}{2}}}{\Big)}.
 \end{split}
 \ee
 \end{itemize}

2. Convergence:
 \begin{itemize}
 \item If $N\geq 4$: For all $p\in [p_N,\infty]$ with $p_N:=\frac{2(N-1)}{N-3}$, we have
 \be\nonumber
 \|b^\epsilon\|_{\tilde{L}^2(\dot{B}_{p,1}^{\frac{N}{p}-\frac{1}{2}})}
+\|\mathcal{Q}{\bf u}^\epsilon\|_{\tilde{L}^2(\dot{B}_{p,1}^{\frac{N}{p}-\frac{1}{2}})}\leq M C_0^{\epsilon\nu}\epsilon^{\frac{1}{2}},
 \ee
 and
\be\nonumber
\begin{split}
&\|\mathcal{P}{\bf u}^\epsilon-{\bf u}\|_{L^\infty(\dot{B}_{p,1}^{\frac{N}{p}-\frac{3}{2}})}+\|\mathcal{P}{\bf u}^\epsilon-{\bf u}\|_{L^1(\dot{B}_{p,1}^{\frac{N}{p}+\frac{1}{2}})}
+\|{\bf d}^\epsilon-{\bf d}\|_{L^\infty(\dot{B}_{p,1}^{\frac{N}{p}-\frac{1}{2}})}\\[2mm]
&\quad+\|{\bf d}^\epsilon-{\bf d}\|_{L^1(\dot{B}_{p,1}^{\frac{N}{p}+\frac{3}{2}})}
\leq M{\Big(}\|\mathcal{P}{\bf u}_0^\epsilon-{\bf u}_0\|_{\dot{B}_{p,1}^{\frac{N}{p}-\frac{3}{2}}}
+\|{\bf d}_0^\epsilon-{\bf d}_0\|_{\dot{B}_{p,1}^{\frac{N}{p}-\frac{1}{2}}}+ C_0^{\epsilon\nu}\epsilon^{\frac{1}{2}}{\Big)}.
\end{split}
\ee
 \item If $N=3$: For all $p\in [2,\infty)$, we have
 \be\nonumber
 \|b^\epsilon\|_{\tilde{L}^{\frac{2p}{p-2}}(\dot{B}_{p,1}^{\frac{2}{p}-\frac{1}{2}})}
+\|\mathcal{Q}{\bf u}^\epsilon\|_{\tilde{L}^2(\dot{B}_{p,1}^{\frac{4}{p}-\frac{1}{2}})}\leq M C_0^{\epsilon\nu}\epsilon^{\frac{1}{2}-\frac{1}{p}},
 \ee
 and
\be\nonumber
\begin{split}
&\|\mathcal{P}{\bf u}^\epsilon-{\bf u}\|_{L^\infty(\dot{B}_{p,1}^{\frac{4}{p}-\frac{3}{2}})}+\|\mathcal{P}{\bf u}^\epsilon-{\bf u}\|_{L^1(\dot{B}_{p,1}^{\frac{4}{p}+\frac{1}{2}})}
+\|{\bf d}^\epsilon-{\bf d}\|_{L^\infty(\dot{B}_{p,1}^{\frac{4}{p}-\frac{1}{2}})}\\[2mm]
&\quad+\|{\bf d}^\epsilon-{\bf d}\|_{L^1(\dot{B}_{p,1}^{\frac{4}{p}+\frac{3}{2}})}
\leq M{\Big(}\|\mathcal{P}{\bf u}_0^\epsilon-{\bf u}_0\|_{\dot{B}_{p,1}^{\frac{4}{p}-\frac{3}{2}}}
+\|{\bf d}_0^\epsilon-{\bf d}_0\|_{\dot{B}_{p,1}^{\frac{4}{p}-\frac{1}{2}}}+  C_0^{\epsilon\nu}\epsilon^{\frac{1}{2}-\frac{1}{p}}{\Big)}.
\end{split}
\ee
 \item If $N=2$: For all $p\in [2, 6]$, we have
 \be\nonumber
 \|b^\epsilon\|_{\tilde{L}^{\frac{4p}{p-2}}(\dot{B}_{p,1}^{\frac{3}{2p}-\frac{3}{4}})}
+\|\mathcal{Q}{\bf u}^\epsilon\|_{\tilde{L}^2(\dot{B}_{p,1}^{\frac{5}{2p}-\frac{1}{4}})}\leq M C_0^{\epsilon\nu}\epsilon^{\frac{1}{4}-\frac{1}{2p}},
 \ee
 and
\be\nonumber
\begin{split}
&\|\mathcal{P}{\bf u}^\epsilon-{\bf u}\|_{L^\infty(\dot{B}_{p,1}^{\frac{5}{2p}-\frac{5}{4}})}+\|\mathcal{P}{\bf u}^\epsilon-{\bf u}\|_{L^1(\dot{B}_{p,1}^{\frac{5}{2p}+\frac{3}{4}})}
+\|{\bf d}^\epsilon-{\bf d}\|_{L^\infty(\dot{B}_{p,1}^{\frac{5}{2p}-\frac{1}{4}})}\\[2mm]
&\quad+\|{\bf d}^\epsilon-{\bf d}\|_{L^1(\dot{B}_{p,1}^{\frac{5}{2p}+\frac{7}{4}})}
\leq M{\Big(}\|\mathcal{P}{\bf u}_0^\epsilon-{\bf u}_0\|_{\dot{B}_{p,1}^{\frac{5}{2p}-\frac{5}{4}}}
+\|{\bf d}_0^\epsilon-{\bf d}_0\|_{\dot{B}_{p,1}^{\frac{5}{2p}-\frac{1}{4}}}+ C_0^{\epsilon\nu}\epsilon^{\frac{1}{4}-\frac{1}{2p}}{\Big)}.
\end{split}
\ee
 \end{itemize}
\end{theo}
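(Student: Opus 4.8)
The plan is to split the proof of Theorem~\ref{th1.2} into three parts following the structure of the statement: uniform existence, convergence of the acoustic (compressible) part via Strichartz estimates, and convergence of the incompressible part via a stability estimate.

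\textbf{Step 1: Uniform existence.} First I would apply Theorem~\ref{th1.1} to the scaled system \eqref{1.4}, observing that the scaling $\rho^\epsilon = 1+\epsilon b^\epsilon$ turns the bound \eqref{1.10} with parameter $\nu$ replaced by $\epsilon\nu$ into exactly the smallness hypothesis \eqref{1.5} (the term $\epsilon\nu\|b_0^\epsilon\|_{\dot B_{2,1}^{N/2}}$ is the weight appearing in $\tilde B_{\epsilon\nu}^{N/2,\infty}$, and $\|b_0^\epsilon\|_{\dot B_{2,1}^{N/2-1}}$ is its low-frequency part). The point is that all constants in Theorem~\ref{th1.1}, after inspecting the proof, are uniform in the Lam\'e parameter, so the bound $\|(b^\epsilon,{\bf u}^\epsilon,{\bf d}^\epsilon-\hat{\bf d})\|_{\mathfrak B_{\epsilon\nu}^{N/2}}\le MC_0^{\epsilon\nu}$ holds with $M$ independent of $\epsilon$. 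The existence and uniqueness for the limit system \eqref{1.3} with the stated bound is quoted from Xu et al.\,\cite{Xu2014well-posedness}, and the smallness needed there follows from \eqref{1.5}.

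\textbf{Step 2: Strichartz estimates for the acoustic part.} The core of the convergence proof. Writing the "compressible part" as the couple $(b^\epsilon,\mathcal Q{\bf u}^\epsilon)$, I would observe that after projecting \eqref{1.4} it satisfies a damped wave (acoustic) system of the form $\partial_t b^\epsilon + \tfrac1\epsilon\Lambda d^\epsilon = F^\epsilon$, $\partial_t d^\epsilon - \nu\Delta d^\epsilon - \tfrac1\epsilon\Lambda b^\epsilon = G^\epsilon$ where $d^\epsilon = \Lambda^{-1}\mathrm{div}\,{\bf u}^\epsilon$, $\Lambda = \sqrt{-\Delta}$, and $F^\epsilon,G^\epsilon$ collect the convection, the pressure nonlinearity, the $\nabla{\bf d}\odot\nabla{\bf d}$ coupling and the terms produced by dividing by $1+\epsilon b^\epsilon$. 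Following Danchin~\cite{danchin2002zero}, I would apply the dispersive (Strichartz) estimates for this $\epsilon$-scaled wave-with-dissipation operator, in the $L^q_t(\dot B^s_{p,1})$ norms appearing in the statement, gaining the factors $\epsilon^{1/2}$ (for $N\ge4$), $\epsilon^{1/2-1/p}$ (for $N=3$), $\epsilon^{1/4-1/2p}$ (for $N=2$) — these exponents being dictated by the dimension-dependent admissibility conditions $p_N = \tfrac{2(N-1)}{N-3}$ etc. The right-hand sides $F^\epsilon,G^\epsilon$ must then be controlled in the appropriate spaces using the uniform bound from Step~1 together with the standard product/composition laws in Besov spaces recalled in Section~2, closing the estimate by a bootstrap since the nonlinear terms are at least quadratic and hence absorb for $c$ small.

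\textbf{Step 3: Convergence of the incompressible part.} I would set $\delta{\bf u}^\epsilon := \mathcal P{\bf u}^\epsilon - {\bf u}$ and $\delta{\bf d}^\epsilon := {\bf d}^\epsilon - {\bf d}$, write down the system they satisfy by subtracting \eqref{1.3} from \eqref{1.4} (after applying $\mathcal P$ to the velocity equation, which kills the singular pressure term), and perform energy-type estimates in $\dot B_{p,1}^{N/p-3/2}$ for $\delta{\bf u}^\epsilon$ and $\dot B_{p,1}^{N/p-1/2}$ for $\delta{\bf d}^\epsilon$. The source terms are of three kinds: differences of the smooth nonlinearities (handled by product laws and the uniform bounds, giving a factor that can be absorbed by Gr\"onwall), terms carrying an explicit $\epsilon$ coefficient (from $1/(1+\epsilon b^\epsilon) - 1$), and the "acoustic pollution" terms involving $b^\epsilon$ and $\mathcal Q{\bf u}^\epsilon$, which are precisely the quantities estimated in Step~2 and therefore contribute $O(C_0^{\epsilon\nu}\epsilon^{\alpha_N})$. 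A Gr\"onwall argument over $[0,\infty)$ — legitimate because the $L^1_t$-in-time norms of the limit solution are finite by Step~1 — then yields the stated inequalities, the term $\|\mathcal P{\bf u}_0^\epsilon-{\bf u}_0\|+\|{\bf d}_0^\epsilon-{\bf d}_0\|$ on the right coming from the initial data of the difference system.

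\textbf{Main obstacle.} The delicate point is Step~2: one must verify that the Strichartz estimates for the $\epsilon$-dependent acoustic operator (with viscous damping) hold with the gain in $\epsilon$ uniformly, and — more subtly — that \emph{every} nonlinear source term in the equations for $(b^\epsilon,\mathcal Q{\bf u}^\epsilon)$ can be estimated in the Strichartz spaces on the right scale without losing the $\epsilon$ factor; the convection term ${\bf u}^\epsilon\cdot\nabla{\bf u}^\epsilon$ and the quasilinear term $\tfrac{\mu\Delta{\bf u}^\epsilon}{1+\epsilon b^\epsilon}$ are the ones requiring care, exactly as in \cite{danchin2002zero}. A secondary difficulty is checking the claimed $\epsilon$-uniformity of the constants in Theorem~\ref{th1.1} when $\nu$ is replaced by $\epsilon\nu$, i.e.\ that the low-frequency/high-frequency splitting in the hybrid space $\tilde B_{\epsilon\nu}^{s}$ behaves well as $\epsilon\to0$.
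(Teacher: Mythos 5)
Your outline matches the paper's proof in all essentials: uniform existence for \eqref{1.4} deduced from Theorem~\ref{th1.1}, Strichartz estimates for the acoustic couple $(b^\epsilon,\Lambda^{-1}{\rm div}\,\mathcal{Q}{\bf u}^\epsilon)$ with the dimension-dependent gains $\epsilon^{1/2}$, $\epsilon^{1/2-1/p}$, $\epsilon^{1/4-1/2p}$, and a heat-equation stability estimate plus Gr\"onwall for $({\mathcal P}{\bf u}^\epsilon-{\bf u},\,{\bf d}^\epsilon-{\bf d})$. The two obstacles you single out are, however, resolved more simply than you propose: the $\epsilon$-uniformity of the constants is not obtained by re-inspecting the proof of Theorem~\ref{th1.1} but by the rescaling $c=\epsilon b^\epsilon(\epsilon^2t,\epsilon x)$, ${\bf v}=\epsilon{\bf u}^\epsilon(\epsilon^2t,\epsilon x)$, ${\bf h}={\bf d}^\epsilon(\epsilon^2t,\epsilon x)$, which maps \eqref{1.4} back to the original system with viscosity $\nu$ and transfers the bound via the scaling invariance of the critical norms; and the dispersive estimate used (Proposition~\ref{pr5.1}) is for the \emph{undamped} acoustic system, the viscous term $\nu\Delta l^\epsilon$ being placed in the source $G^\epsilon$ and controlled directly by the uniform bound of Step~1 (so no Strichartz estimate for a damped operator, and no bootstrap, is needed — all source terms are estimated outright from the already-established a priori bounds, with an extra interpolation in the cases $N=2,3$ to reach $\mathcal{Q}{\bf u}^\epsilon$ in $L^2$ over the stated range of $p$).
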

\begin{re}
{\rm When ${\bf d}\equiv\hat{\bf d}$,  the compressible flow of nematic liquid crystals is reduced to the well-known compressible Navier-Stokes system. Our results coincide with the ones in \cite{danchin2002zero} concerning the zero Mach number limit in critical spaces for compressible Navier-Stokes equations.}
\end{re}

Our paper is organized as follows. In the next section, we recall some basic facts about Littlewood-Paley decomposition and  the homogeneous Besov spaces. In Section 3, we investigate the existence of global solutions for system \eqref{1.1}-\eqref{1.11}. Section 4 is devoted to the proof of uniqueness. In Section 5,  we will prove the convergence of the incompressible limit in the whole space $\mathbb{R}^N$.

We end this section by introducing the notations used throughout this paper. $C$ stands for a harmless constant which never depends on $\epsilon$, and we sometimes use the notation $A\lesssim B$ as an equivalent to $A\leq CB$. The notation $A\approx B$ means that $A\lesssim B$ and $B\lesssim A$.

\section{Homogeneous and hybrid Besov spaces}
\setcounter{equation}{0}\setcounter{section}{2}\indent
 We first recall the definition and some basic properties of homogeneous Besov spaces. They could be defined through the use of a dyadic partition of unity in Fourier variables called homogeneous Littlewood-Paley decomposition. To this end, choose a radial function $\varphi\in \mathcal{S}(\mathbb{R}^N)$ supported in $\mathcal{C}=\{\xi\in\mathbb{R}^N, \frac{3}{4}\leq |\xi|\leq \frac{8}{3}\}$ such that
$$
\sum_{j\in\mathbb{Z}}\varphi(2^{-j}\xi)=1~~~{\rm if}~~~ \xi\neq 0.
$$
The homogeneous frequency localization operator $\dot{\Delta}_j$ and $\dot{S}_j$ are defined by
$$
\dot{\Delta}_j u=\varphi (2^{-j}D)u, ~~~~\dot{S}_j u=\sum_{k\leq j-1}\dot{\Delta}_k u~~~{\rm for}~~j\in\mathbb{Z}.
$$
With our choice of $\varphi$, one can easily verify that
\be\label{2.1}
\dot{\Delta}_p\dot{\Delta}_q u\equiv 0 ~~{\rm if}~~|p-q|\geq 2~~~~{\rm and}~~~
\dot{\Delta}_p(\dot{S}_{q-1}u\dot{\Delta}_q u)\equiv 0~~~{\rm if}~~|p-q|\geq 5.
\ee

Let us denote the space $\mathcal{Y}^\prime(\mathbb{R}^N)$ by the quotient space of $\mathcal{S}^\prime(\mathbb{R}^N)/\mathcal{P}$ with the polynomials space $\mathcal{P}$. The formal equality
$$
u=\sum_{k\in\mathbb{Z}}\dot{\Delta}_k u
$$
holds true for $u\in \mathcal{Y}^\prime(\mathbb{R}^N)$ and is called the homogeneous Littlewood-Paley decomposition.

We will repeatedly use the following Bernstein's inequality:
\begin{lem}\label{le2.1}
{\rm(}see {\rm \cite{chemin1998perfect}}{\rm )} Let $\mathcal{C}$ be an annulus and $\mathcal{B}$ a ball, $1\leq p\leq q\leq +\infty$. Assume that $f\in L^p(\mathbb{R}^N)$, then for any nonnegative integer $k$, there exists constant $C$ independent of $f$, $k$ such that
$$
{\rm supp} \hat{f}\subset\lambda \mathcal{B}\Rightarrow\|D^k f\|_{L^q(\mathbb{R}^N)}:=\sup_{|\alpha|=k}\|\partial^\alpha f\|_{L^q(\mathbb{R}^N)}\leq C^{k+1}\lambda^{k+N(\frac{1}{p}-\frac{1}{q})}\|f\|_{L^p(\mathbb{R}^N)},
$$
\be\nonumber
{\rm supp} \hat{f}\subset\lambda\mathcal{C}\Rightarrow C^{-k-1}\lambda^k\|f\|_{L^p(\mathbb{R}^N)}\leq \|D^k f\|_{L^p(\mathbb{R}^N)}\leq
C^{k+1}\lambda^k\|f\|_{L^p(\mathbb{R}^N)}.
\ee
\end{lem}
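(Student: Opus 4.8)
The plan is to prove Bernstein's inequality by reducing the frequency-localized estimate to a convolution bound and then applying Young's inequality for convolutions. The key observation is that if $\mathrm{supp}\,\hat{f}\subset\lambda\mathcal{B}$, one can reconstruct $f$ from a fixed smooth cutoff: choose $\phi\in\mathcal{S}(\mathbb{R}^N)$ with $\hat\phi$ supported in a slightly larger ball and $\hat\phi\equiv 1$ on $\mathcal{B}$. Then $\widehat{f}(\xi)=\hat\phi(\lambda^{-1}\xi)\hat{f}(\xi)$, so setting $\phi_\lambda(x):=\lambda^N\phi(\lambda x)$ gives $f=\phi_\lambda * f$ (this is the reproducing-kernel identity at scale $\lambda$). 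Differentiating, $\partial^\alpha f=(\partial^\alpha\phi_\lambda)*f$ for any multi-index $\alpha$ with $|\alpha|=k$.

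First I would carry out the ball case. Applying Young's inequality $\|g*h\|_{L^q}\le\|g\|_{L^r}\|h\|_{L^p}$ with the Lebesgue exponents satisfying $1+\tfrac1q=\tfrac1r+\tfrac1p$ (so $1\le p\le q$ forces $1\le r$), we obtain
\be\nonumber
\|\partial^\alpha f\|_{L^q}=\|(\partial^\alpha\phi_\lambda)*f\|_{L^q}\le\|\partial^\alpha\phi_\lambda\|_{L^r}\,\|f\|_{L^p}.
\ee
A scaling computation gives $\partial^\alpha\phi_\lambda(x)=\lambda^{N+k}(\partial^\alpha\phi)(\lambda x)$, hence $\|\partial^\alpha\phi_\lambda\|_{L^r}=\lambda^{k+N(1-\frac1r)}\|\partial^\alpha\phi\|_{L^r}=\lambda^{k+N(\frac1p-\frac1q)}\|\partial^\alpha\phi\|_{L^r}$. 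Taking the supremum over $|\alpha|=k$ produces the first inequality, with the constant $C^{k+1}$ absorbing $\max_{|\alpha|=k}\|\partial^\alpha\phi\|_{L^r}$; the exponential-in-$k$ form of the constant comes from the standard fact that the $L^r$ norms of derivatives of a fixed Schwartz function grow at most geometrically, which one checks by noting $\widehat{\partial^\alpha\phi}=(i\xi)^\alpha\hat\phi$ is supported in a fixed annulus/ball and estimating via a finite number of seminorms.

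For the annulus case I would use the same convolution identity but with a cutoff $\psi\in\mathcal{S}$ whose Fourier support is an annulus containing $\mathcal{C}$ and with $\hat\psi\equiv 1$ on $\mathcal{C}$, giving the upper bound exactly as above (here $p=q$, so $r=1$ and the $N(\frac1p-\frac1q)$ factor disappears). The lower bound $C^{-k-1}\lambda^k\|f\|_{L^p}\le\|D^kf\|_{L^p}$ is the one genuinely new ingredient: on the annulus $\xi$ is bounded below, so the symbol $|\xi|^{2}$ is invertible there, and one writes $f=\lambda^{-2k}\,g_\lambda*(-\Delta)^k f$ where $\hat g$ is a fixed smooth function equal to $|\xi|^{-2k}$ on $\mathcal{C}$ (so that $\widehat{g_\lambda}(\xi)=\lambda^{2k}|\xi|^{-2k}$ on $\lambda\mathcal C$). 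Then $\|f\|_{L^p}\le\lambda^{-2k}\|g_\lambda\|_{L^1}\|(-\Delta)^k f\|_{L^p}\le C^{k+1}\lambda^{-k}\|D^{2k}f\|_{L^p}$, and relabeling $2k$ gives the stated form; the main care is tracking that the reconstruction function $g$ depends on $k$, so its $L^1$ norm must be controlled uniformly with at most geometric growth in $k$, which is where the $C^{k+1}$ constant is essential. This control of the $k$-dependence of the auxiliary kernels is the only subtle point; everything else is the reproducing identity plus Young's inequality.
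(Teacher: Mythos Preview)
The paper does not prove this lemma at all; it is quoted with a reference to \cite{chemin1998perfect} as a standard fact. Your argument is the textbook one and is correct for both upper bounds: the reproducing identity $f=\phi_\lambda*f$ plus Young's inequality, with the scaling computation $\|\partial^\alpha\phi_\lambda\|_{L^r}=\lambda^{k+N(\frac1p-\frac1q)}\|\partial^\alpha\phi\|_{L^r}$, is exactly right.

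There is a small but genuine slip in the lower bound. With your convention $\widehat{g_\lambda}(\xi)=\hat g(\xi/\lambda)$ one has $\|g_\lambda\|_{L^1}=\|g\|_{L^1}$, so the chain
\[
\|f\|_{L^p}\le \lambda^{-2k}\|g_\lambda\|_{L^1}\|(-\Delta)^k f\|_{L^p}
\]
yields $C^{k+1}\lambda^{-2k}\|D^{2k}f\|_{L^p}$, not $\lambda^{-k}$ as you wrote. Moreover, ``relabeling $2k$'' only produces the inequality for even orders. The clean repair is to combine this with the upper bound you already proved: since $\partial^\beta f$ (for $|\beta|=k$) still has Fourier support in $\lambda\mathcal C$, the annulus upper bound gives $\|D^{2k}f\|_{L^p}\le C^{k+1}\lambda^{k}\|D^{k}f\|_{L^p}$, and substituting yields $\|f\|_{L^p}\le C^{2k+2}\lambda^{-k}\|D^k f\|_{L^p}$, which is the desired lower bound after renaming the constant. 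Alternatively, iterate the $k=1$ identity $\hat f=\sum_j \bigl(-i\xi_j|\xi|^{-2}\tilde\psi(\xi/\lambda)\bigr)\,\widehat{\partial_j f}$ to get the result for every integer $k$ directly, with a $k$-independent kernel whose $L^1$ norm does not depend on $k$; this avoids having to control a $k$-dependent $g$.
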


Next, let us recall the definitions of the Besov spaces.
\begin{defi}\nonumber
Let $s\in\mathbb{R}$, $1\leq p, r\leq +\infty$. The homogeneous Besov space $\dot{B}_{p,r}^s$ is defined by
$$
\dot{B}_{p,r}^s=\{f\in\mathcal{Y}^\prime(\mathbb{R}^N): \|f\|_{\dot{B}_{p,r}^s}<+\infty\},
$$
where
$$
\|f\|_{\dot{B}_{p,r}^s}:=\|2^{ks}\|\dot{\Delta}_k f\|_{L^p}\|_{\ell^r}.
$$
\end{defi}

We next introduce the Besov-Chemin-Lerner space $\tilde{L}_T^\rho(\dot{B}_{p,r}^s)$, which is initiated in \cite{chemin1995flot}.
\begin{defi}
Let $s\in\mathbb{R}, 1\leq p, \rho, r\leq +\infty$, $0<T\leq +\infty$. The space $\tilde{L}_T^\rho(\dot{B}_{p,r}^s)$ is defined by
$$
\tilde{L}_T^\rho(\dot{B}_{p,r}^s)=\{f\in (0,+\infty)\times\mathcal{Y}^\prime(\mathbb{R}^N):
\|f\|_{\tilde{L}_T^\rho(\dot{B}_{p,r}^s)}<+\infty\},
$$
where
$$
\|f\|_{\tilde{L}_T^\rho(\dot{B}_{p,r}^s)}:=\|2^{ks}\|\dot{\Delta}_k f(t)\|_{L^\rho(0,T;L^p)}\|_{\ell^r}.
$$
\end{defi}
A direct application of Minkowski's inequality implies that
$$
L_T^\rho(\dot{B}_{p,r}^s)\hookrightarrow \tilde{L}_T^\rho(\dot{B}_{p,r}^s),~~~{\rm if}~~r\geq \rho,
$$
$$
\tilde{L}_T^\rho(\dot{B}_{p,r}^s)\hookrightarrow {L}_T^\rho(\dot{B}_{p,r}^s),~~~{\rm if}~~\rho\geq r.
$$

We also need the following hybrid Besov space introduced by Danchin in \cite{danchin2000global}:
\begin{defi}
For $\nu>0$, $r\in [1, +\infty], s\in\mathbb{R}$, we define
$$
\|f\|_{\tilde{B}_\nu^{s, r}}:=\sum_{j\in\mathbb{Z}}2^{js}\max\{\nu, 2^{-j}\}^{1-\frac{2}{r}}\|\dot{\Delta}_j f\|_{L^2}.
$$
\end{defi}
By the definition, it is easy to verify that
$$
\|f\|_{\tilde{B}_\nu^{s,\infty}}\approx \|f\|_{\dot{B}_{2,1}^{s-1}}+\nu\|f\|_{\dot{B}_{2,1}^{s}},
$$
which means that $\tilde{B}_\nu^{s,\infty}\approx\dot{B}_{2,1}^{s-1}\cap\dot{B}_{2,1}^{s}$.

Let us now state some classical properties for the Besov spaces.
\begin{prop}\label{pr2.1} The following properties hold true:

\medskip
{\rm 1)} Derivation: There exists a universal constant $C$ such that
$$
C^{-1}\|f\|_{\dot{B}_{p,r}^s}\leq \|\nabla f\|_{\dot{B}_{p,r}^{s-1}}\leq C\|f\|_{\dot{B}_{p,r}^s}.
$$

{\rm 2)} Sobolev embedding: If $1\leq p_1\leq p_2\leq\infty$ and $1\leq r_1\leq r_2\leq\infty$, then $\dot{B}_{p_1, r_1}^s\hookrightarrow \dot{B}_{p_2, r_2}^{s-\frac{N}{p_1}+\frac{N}{p_2}}$.

\medskip
{\rm 3)} Real interpolation: $\|f\|_{\dot{B}_{p,r}^{\theta s_1+(1-\theta)s_2}}\leq \|f\|_{\dot{B}_{p,r}^{s_1}}^{\theta}\|f\|_{\dot{B}_{p,r}^{s_2}}^{1-\theta}$.

\medskip
{\rm 4)} Algebraic properties: for $s>0$, $\dot{B}_{p,1}^s\cap L^\infty$ is an algebra.

\medskip
{\rm 5)} Scaling properties:

\medskip
\quad\quad {\rm(a)} for all $\lambda>0$ and $f\in\dot{B}_{p,1}^s$, we have
$$
\|f(\lambda\cdot)\|_{\dot{B}_{p,1}^s}\approx \lambda^{s-\frac{N}{p}}\|f\|_{\dot{B}_{p,1}^s},
$$

 \medskip
\quad\quad {\rm(b)} for $f=f(t, x)$ in $L^r(0, T; \dot{B}_{p,1}^s)$, we have
$$
\|f(\lambda^a\cdot, \lambda^b\cdot)\|_{L_T^r(\dot{B}_{p,1}^s)}\approx
\lambda^{b(s-\frac{N}{p})-\frac{a}{r}}\|f\|_{L_{\lambda^aT}^r(\dot{B}_{p,1}^s)}.
$$
\end{prop}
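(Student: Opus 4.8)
The entire proposition rests on the homogeneous Littlewood--Paley decomposition together with Bernstein's inequality (Lemma \ref{le2.1}), so my plan is to reduce each assertion to a statement about the dyadic blocks $\dot{\Delta}_k f$ and then sum in $\ell^r$. For the derivation estimate (1), I would use that $\dot{\Delta}_k$ commutes with $\nabla$ and that $\dot{\Delta}_k f$ has Fourier support in the annulus $2^k\mathcal{C}$; the two-sided (annulus) Bernstein inequality then gives $\|\nabla\dot{\Delta}_k f\|_{L^p}\approx 2^{k}\|\dot{\Delta}_k f\|_{L^p}$, and multiplying by $2^{k(s-1)}$ and taking the $\ell^r$-norm yields both inequalities at once. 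For the Sobolev embedding (2), I would apply the first (ball) Bernstein inequality to $\dot{\Delta}_k f$, whose spectrum lies in $2^k\mathcal{C}\subset 2^k\mathcal{B}$, to obtain $\|\dot{\Delta}_k f\|_{L^{p_2}}\lesssim 2^{kN(1/p_1-1/p_2)}\|\dot{\Delta}_k f\|_{L^{p_1}}$; absorbing this factor into the Besov weight reduces the claim to the elementary inclusion $\ell^{r_1}\hookrightarrow\ell^{r_2}$ valid for $r_1\le r_2$.

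For the real interpolation inequality (3), the point is purely a Hölder inequality in the summation index: writing $2^{k(\theta s_1+(1-\theta)s_2)}\|\dot{\Delta}_k f\|_{L^p}=(2^{ks_1}\|\dot{\Delta}_k f\|_{L^p})^{\theta}(2^{ks_2}\|\dot{\Delta}_k f\|_{L^p})^{1-\theta}$ and applying Hölder in $\ell^r$ with conjugate exponents $1/\theta$ and $1/(1-\theta)$ gives the product bound directly. The scaling properties (5) follow from the behaviour of the blocks under dilation: for $f_\lambda:=f(\lambda\cdot)$ one has $\widehat{f_\lambda}(\xi)=\lambda^{-N}\hat f(\xi/\lambda)$, so the frequency-$2^k$ content of $f_\lambda$ corresponds to the frequency-$2^k/\lambda$ content of $f$, and $\|(\dot{\Delta}_m f)_\lambda\|_{L^p}=\lambda^{-N/p}\|\dot{\Delta}_m f\|_{L^p}$. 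Choosing the integer $j$ with $2^j\le\lambda<2^{j+1}$ and reindexing the sum produces the factor $\lambda^{s}\lambda^{-N/p}$; since only the equivalence $\approx$ is claimed, the mismatch between the dyadic partition and an arbitrary $\lambda$ costs only a harmless constant. The space-time version (5b) then follows by inserting (5a) inside the $L^r_T$ integral and performing the substitution $\tau=\lambda^a t$, which turns the integral over $[0,T]$ into one over $[0,\lambda^a T]$ and contributes the factor $\lambda^{-a/r}$.

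The genuinely substantial item is the algebra property (4), which I would prove by Bony's paraproduct decomposition $fg=T_fg+T_gf+R(f,g)$, where $T_fg=\sum_k\dot{S}_{k-1}f\,\dot{\Delta}_k g$ and $R(f,g)=\sum_{|k-k'|\le1}\dot{\Delta}_k f\,\dot{\Delta}_{k'}g$. For the paraproduct $T_fg$ each summand $\dot{S}_{k-1}f\,\dot{\Delta}_k g$ is spectrally localized in an annulus of size $2^k$ (by the second relation in \eqref{2.1}), so using $\|\dot{S}_{k-1}f\|_{L^\infty}\le\|f\|_{L^\infty}$ I would bound $\|T_fg\|_{\dot{B}_{p,1}^s}\lesssim\|f\|_{L^\infty}\|g\|_{\dot{B}_{p,1}^s}$, and symmetrically for $T_gf$. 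The delicate term is the remainder $R(f,g)$: here each summand is only localized in a ball of size $2^k$ rather than an annulus, so to recover the Besov norm one must apply $\dot{\Delta}_j$, use the near-diagonal structure, and sum a series whose convergence is exactly where the hypothesis $s>0$ enters; estimating $\|R(f,g)\|_{\dot{B}_{p,1}^s}\lesssim\|f\|_{L^\infty}\|g\|_{\dot{B}_{p,1}^s}$ closes the argument. Combining the three pieces gives $\|fg\|_{\dot{B}_{p,1}^s}\lesssim\|f\|_{L^\infty}\|g\|_{\dot{B}_{p,1}^s}+\|g\|_{L^\infty}\|f\|_{\dot{B}_{p,1}^s}$, together with the trivial $\|fg\|_{L^\infty}\le\|f\|_{L^\infty}\|g\|_{L^\infty}$, which is the algebra estimate. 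I expect this remainder estimate, and the careful bookkeeping of the role of $s>0$, to be the main obstacle, the remaining items being routine consequences of Bernstein's inequality.
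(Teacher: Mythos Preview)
Your sketch is correct and follows exactly the standard textbook route (Bernstein for items 1--2, H\"older in $\ell^r$ for item 3, dilation on dyadic blocks for item 5, and Bony's decomposition $fg=\dot T_fg+\dot T_gf+\dot R(f,g)$ for item 4, with the remainder estimate being the one place where $s>0$ is used). The paper itself gives no proof of this proposition: it is stated as a collection of classical facts about homogeneous Besov spaces, with the implicit reference to \cite{bahouri2011fourier}, and the subsequent Proposition~\ref{pr2.2} (continuity of $\dot T$ and $\dot R$) is likewise only quoted. So there is nothing to compare against beyond noting that your argument is precisely the standard one found in that reference.
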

Next we recall a few nonlinear estimates in Besov spaces which may be obtained
by means of paradifferential calculus. Firstly introduced by J.-M. Bony in \cite{bony1981calcul}, the paraproduct between $f$
and $g$ is defined by
$$
\dot{T}_f g=\sum_{q\in\mathbb{Z}}\dot{S}_{q-1}f\dot{\Delta}_q g,
$$
and the remainder is given by
$$
\dot{R}(f,g)=\sum_{q\in\mathbb{Z}}\dot{\Delta}_q f\tilde{\dot{\Delta}}_q g
$$
with
$$
\tilde{\dot{\Delta}}_q g:=(\dot{\Delta}_{q-1}+\dot{\Delta}_{q}+\dot{\Delta}_{q+1})g.
$$
We have the following so-called Bony's decomposition:
\be\label{2.3}
fg=\dot{T}_g f+\underbrace{\dot{T}_f g+\dot{R}(f,g)}_{\dot{T}_f^\prime g}.
\ee

The paraproduct $\dot{T}$ and the remainder $\dot{R}$ operators satisfy the following continuous properties.
 \begin{prop}\label{pr2.2}
Suppose that $s\in\mathbb{R}, \sigma>0,$ and $1\leq p, p_1, p_2, r, r_1, r_2\leq \infty$.  Then we have
 \medskip

{\rm 1)} The paraproduct $\dot{T}$ is a bilinear, continuous operator from $L^\infty\times\dot{B}_{p,r}^s$ to $\dot{B}_{p,r}^s$, and from $\dot{B}_{\infty, r_1}^{-\sigma}\times\dot{B}_{p,r_2}^s$ to $\dot{B}_{p,r}^{s-\sigma}$ with
$\frac{1}{r}=\min\{1, \frac{1}{r_1}+\frac{1}{r_2}\}$.

\medskip
{\rm 2)} The remainder $\dot{R}$ is bilinear continuous from $\dot{B}_{p_1,r_1}^{s_1}\times\dot{B}_{p_2,r_2}^{s_2}$ to $\dot{B}_{p,r}^{s_1+s_2}$ with $s_1+s_2>0$, $\frac{1}{p}=\frac{1}{p_1}+\frac{1}{p_2}\leq 1$, and $\frac{1}{r}=\frac{1}{r_1}+\frac{1}{r_2}\leq 1$.

 \end{prop}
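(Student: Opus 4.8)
The plan is to run the standard Littlewood--Paley/paradifferential argument, exploiting the two structural facts behind Bony's decomposition \eqref{2.3}: the spectrum of each block $\dot{S}_{q-1}f\,\dot{\Delta}_q g$ of $\dot{T}_f g$ lies in a fixed dyadic annulus $2^q\widetilde{\mathcal C}$, whereas the spectrum of each block $\dot{\Delta}_q f\,\tilde{\dot{\Delta}}_q g$ of $\dot{R}(f,g)$ lies only in a ball $2^q\widetilde{\mathcal B}$. Consequently (cf. \eqref{2.1} for the paraproduct, and the support of $\varphi$ for the remainder) there is a fixed integer $N_0$, depending only on $\varphi$, such that
$$
\dot{\Delta}_j(\dot{T}_f g)=\sum_{|j-q|\le N_0}\dot{\Delta}_j\big(\dot{S}_{q-1}f\,\dot{\Delta}_q g\big),\qquad
\dot{\Delta}_j(\dot{R}(f,g))=\sum_{q\ge j-N_0}\dot{\Delta}_j\big(\dot{\Delta}_q f\,\tilde{\dot{\Delta}}_q g\big).
$$
In each case one estimates the $L^p$ norm of a generic block by H\"older's inequality (and Bernstein's inequality, Lemma \ref{le2.1}), reduces the claim to an inequality between $\ell^r$ sequences, and finishes with Young's inequality $\ell^1\ast\ell^r\hookrightarrow\ell^r$ together with H\"older's inequality $\ell^{r_1}\cdot\ell^{r_2}\hookrightarrow\ell^r$.

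\textbf{Part 1) (paraproduct).} I would treat the two claimed mappings separately. If $f\in L^\infty$, then $\|\dot{S}_{q-1}f\|_{L^\infty}\lesssim\|f\|_{L^\infty}$, so H\"older gives $\|\dot{S}_{q-1}f\,\dot{\Delta}_q g\|_{L^p}\lesssim\|f\|_{L^\infty}\|\dot{\Delta}_q g\|_{L^p}$, hence
$$
2^{js}\|\dot{\Delta}_j(\dot{T}_f g)\|_{L^p}\lesssim\|f\|_{L^\infty}\sum_{|j-q|\le N_0}2^{(j-q)s}c_q,\qquad c_q:=2^{qs}\|\dot{\Delta}_q g\|_{L^p}\in\ell^r;
$$
taking the $\ell^r$ norm in $j$ and convolving with the finitely supported (hence $\ell^1$) kernel $\big(2^{ks}\mathbf{1}_{|k|\le N_0}\big)_k$ yields $\|\dot{T}_f g\|_{\dot{B}^s_{p,r}}\lesssim\|f\|_{L^\infty}\|g\|_{\dot{B}^s_{p,r}}$. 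If $f\in\dot{B}^{-\sigma}_{\infty,r_1}$ with $\sigma>0$, set $a_k:=2^{-k\sigma}\|\dot{\Delta}_k f\|_{L^\infty}\in\ell^{r_1}$; then $\|\dot{S}_{q-1}f\|_{L^\infty}\le\sum_{k\le q-2}2^{k\sigma}a_k=2^{q\sigma}b_q$ with $b_q:=\sum_{m\ge 2}2^{-m\sigma}a_{q-m}$, and $(b_q)\in\ell^{r_1}$ with norm $\lesssim\|f\|_{\dot{B}^{-\sigma}_{\infty,r_1}}$ by Young's inequality, which is exactly where $\sigma>0$ is used. Therefore
$$
2^{j(s-\sigma)}\|\dot{\Delta}_j(\dot{T}_f g)\|_{L^p}\lesssim\sum_{|j-q|\le N_0}2^{(j-q)(s-\sigma)}b_q c_q,\qquad c_q:=2^{qs}\|\dot{\Delta}_q g\|_{L^p}\in\ell^{r_2};
$$
H\"older gives $(b_q c_q)\in\ell^r$ with $\tfrac1r=\min\{1,\tfrac1{r_1}+\tfrac1{r_2}\}$ (when $\tfrac1{r_1}+\tfrac1{r_2}\ge 1$ one uses $r_2\le r_1'$) and norm $\lesssim\|f\|_{\dot{B}^{-\sigma}_{\infty,r_1}}\|g\|_{\dot{B}^s_{p,r_2}}$, and a final $\ell^1\ast\ell^r\hookrightarrow\ell^r$ convolution closes the estimate.

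\textbf{Part 2) (remainder).} The argument is identical but with the ``low--high to a ball'' geometry. By H\"older with $\tfrac1p=\tfrac1{p_1}+\tfrac1{p_2}\le 1$,
$\|\dot{\Delta}_q f\,\tilde{\dot{\Delta}}_q g\|_{L^p}\le\|\dot{\Delta}_q f\|_{L^{p_1}}\|\tilde{\dot{\Delta}}_q g\|_{L^{p_2}}$, so with $a_q:=2^{qs_1}\|\dot{\Delta}_q f\|_{L^{p_1}}\in\ell^{r_1}$ and $\tilde c_q:=2^{qs_2}\|\tilde{\dot{\Delta}}_q g\|_{L^{p_2}}$ (an $\ell^{r_2}$ sequence since it is dominated by $c_{q-1}+c_q+c_{q+1}$ with $c_q:=2^{qs_2}\|\dot{\Delta}_q g\|_{L^{p_2}}$),
$$
2^{j(s_1+s_2)}\|\dot{\Delta}_j(\dot{R}(f,g))\|_{L^p}\lesssim\sum_{q\ge j-N_0}2^{(j-q)(s_1+s_2)}a_q\tilde c_q.
$$
Since $s_1+s_2>0$, the kernel $\big(2^{k(s_1+s_2)}\mathbf{1}_{k\le N_0}\big)_k$ lies in $\ell^1$; H\"older gives $(a_q\tilde c_q)\in\ell^r$ with $\tfrac1r=\tfrac1{r_1}+\tfrac1{r_2}\le 1$, and Young's inequality applied to the convolution yields $\|\dot{R}(f,g)\|_{\dot{B}^{s_1+s_2}_{p,r}}\lesssim\|f\|_{\dot{B}^{s_1}_{p_1,r_1}}\|g\|_{\dot{B}^{s_2}_{p_2,r_2}}$.

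\textbf{Main obstacle.} There is no deep difficulty here; the proof is essentially bookkeeping. The point requiring care is to keep the two support constraints straight --- $|j-q|\le N_0$ for the paraproduct versus $q\ge j-N_0$ for the remainder --- and to check that the relevant exponent ($s$, respectively $\sigma$, respectively $s_1+s_2$) has the sign that makes the associated dyadic kernel summable: the hypotheses $\sigma>0$ in 1) and $s_1+s_2>0$ in 2) are precisely what guarantees convergence of the corresponding geometric series, and the $\min\{1,\cdot\}$ appearing in the third index in 1) is forced by the H\"older step on the sequence side.
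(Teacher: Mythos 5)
Your proposal is correct and is exactly the classical paradifferential argument (spectral localization of the blocks, H\"older on the physical side, then Young and H\"older on the $\ell^r$ side, with $\sigma>0$ and $s_1+s_2>0$ supplying the summable geometric kernels). The paper states Proposition \ref{pr2.2} without proof as a standard fact from the literature (Bony's calculus, as in \cite{bahouri2011fourier}), and your write-up faithfully reproduces that standard proof, so there is nothing to correct or to contrast.
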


From \eqref{2.3} and Proposition \ref{pr2.2}, we have the following more accurate product estimate:
\begin{col}\label{co2.1}
If $u\in\dot{B}_{p_1,1}^{s_1}$ and $v\in\dot{B}_{p_2,1}^{s_2}$ with $1\leq p_1\leq p_2\leq \infty,~s_1\leq \frac{N}{p_1},~s_2\leq \frac{N}{p_2}$ and $s_1+s_2>0$, then
$uv\in\dot{B}_{p_2,1}^{s_1+s_2-\frac{N}{p_1}}$ and there exists a constant $C$, depending only on $N, s_1, s_2, p_1$ and $p_2$, such that
$$
\|uv\|_{\dot{B}_{p_2,1}^{s_1+s_2-\frac{N}{p_1}}}\leq C\|u\|_{\dot{B}_{p_1,1}^{s_1}}
\|v\|_{\dot{B}_{p_2,1}^{s_2}}.
$$
\end{col}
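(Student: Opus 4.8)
The plan is to combine Bony's decomposition \eqref{2.3} with the continuity properties of the paraproduct and remainder operators (Proposition \ref{pr2.2}) and the Sobolev embeddings of Proposition \ref{pr2.1}, 2). Writing
$$
uv=\dot{T}_u v+\dot{T}_v u+\dot{R}(u,v),
$$
it suffices to bound each of the three pieces in $\dot{B}_{p_2,1}^{\,s_1+s_2-\frac{N}{p_1}}$ by $C\|u\|_{\dot{B}_{p_1,1}^{s_1}}\|v\|_{\dot{B}_{p_2,1}^{s_2}}$, after which one sums, using that $\dot{B}_{p,1}^s$ is characterised by the $\ell^1$-summability of $\big(2^{js}\|\dot{\Delta}_j\cdot\|_{L^p}\big)_j$.

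First I would treat $\dot{T}_u v$. Since $s_1\le\frac{N}{p_1}$, Sobolev embedding gives $u\in\dot{B}_{p_1,1}^{s_1}\hookrightarrow\dot{B}_{\infty,1}^{\,s_1-\frac{N}{p_1}}$. If $s_1<\frac{N}{p_1}$, then with $\sigma:=\frac{N}{p_1}-s_1>0$ the second mapping property in Proposition \ref{pr2.2}, 1) yields $\dot{T}_u v\in\dot{B}_{p_2,1}^{\,s_2-\sigma}=\dot{B}_{p_2,1}^{\,s_1+s_2-\frac{N}{p_1}}$; in the borderline case $s_1=\frac{N}{p_1}$ one instead uses $\dot{B}_{p_1,1}^{N/p_1}\hookrightarrow\dot{B}_{\infty,1}^{0}\hookrightarrow L^\infty$ and the first mapping property in Proposition \ref{pr2.2}, 1), which gives $\dot{T}_u v\in\dot{B}_{p_2,1}^{\,s_2}=\dot{B}_{p_2,1}^{\,s_1+s_2-\frac{N}{p_1}}$. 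The term $\dot{T}_v u$ is handled symmetrically: from $v\in\dot{B}_{p_2,1}^{s_2}\hookrightarrow\dot{B}_{\infty,1}^{\,s_2-\frac{N}{p_2}}$ (or $v\in L^\infty$ when $s_2=\frac{N}{p_2}$) one obtains $\dot{T}_v u\in\dot{B}_{p_1,1}^{\,s_1+s_2-\frac{N}{p_2}}$, and since $p_1\le p_2$ one further Sobolev embedding $\dot{B}_{p_1,1}^{\,s_1+s_2-\frac{N}{p_2}}\hookrightarrow\dot{B}_{p_2,1}^{\,s_1+s_2-\frac{N}{p_1}}$ — the loss of regularity being exactly $\frac{N}{p_1}-\frac{N}{p_2}$ — gives the required bound.

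The remaining, and most delicate, piece is the remainder $\dot{R}(u,v)$, and this is the term in which the hypothesis $s_1+s_2>0$ intervenes. Embedding $\dot{B}_{p_2,1}^{s_2}\hookrightarrow\dot{B}_{p_2,\infty}^{s_2}$ and applying Proposition \ref{pr2.2}, 2) to $\dot{R}:\dot{B}_{p_1,1}^{s_1}\times\dot{B}_{p_2,\infty}^{s_2}\to\dot{B}_{p,1}^{\,s_1+s_2}$, with $\frac1p=\frac1{p_1}+\frac1{p_2}$ and $\frac1r=1+0=1$, gives $\dot{R}(u,v)\in\dot{B}_{p,1}^{\,s_1+s_2}$ (this step requires $\frac1{p_1}+\frac1{p_2}\le1$); one concludes with $\dot{B}_{p,1}^{\,s_1+s_2}\hookrightarrow\dot{B}_{p_2,1}^{\,s_1+s_2-\frac{N}{p_1}}$, since $\frac{N}{p}-\frac{N}{p_2}=\frac{N}{p_1}$. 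In the range $\frac1{p_1}+\frac1{p_2}>1$ the target Lebesgue exponent of $\dot{R}(u,v)$ drops below $1$ and the abstract estimate no longer applies; there one argues by hand, writing $\dot{\Delta}_j\dot{R}(u,v)=\sum_{q\ge j-N_0}\dot{\Delta}_j\big(\dot{\Delta}_q u\,\tilde{\dot{\Delta}}_q v\big)$ for a fixed integer $N_0$, bounding $\|\dot{\Delta}_q u\|_{L^\infty}\lesssim 2^{qN/p_1}\|\dot{\Delta}_q u\|_{L^{p_1}}$ via Lemma \ref{le2.1}, and summing the resulting geometric-type series, whose convergence is again ensured by the sign conditions on $s_1,s_2$.

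I expect the only genuine obstacle to be the index bookkeeping: in each of the three terms one must check that, after correcting the Lebesgue-exponent mismatch by a Sobolev embedding, the sum of the two regularity indices lands exactly on $s_1+s_2-\frac{N}{p_1}$, and one must dispose separately of the borderline cases $s_1=\frac{N}{p_1}$ and $s_2=\frac{N}{p_2}$, where the negative-index paraproduct estimate degenerates and has to be replaced by the $L^\infty$ one.
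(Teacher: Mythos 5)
Your decomposition of the proof is exactly the one the paper intends: the text simply asserts that the corollary follows from Bony's decomposition \eqref{2.3} and Proposition \ref{pr2.2}, and your treatment of $\dot{T}_u v$, $\dot{T}_v u$ (with the extra Sobolev embedding to pass from $p_1$ to $p_2$) and $\dot{R}(u,v)$, including the borderline cases $s_i=\frac{N}{p_i}$ handled via the $L^\infty$ paraproduct estimate, is correct in the regime $\frac1{p_1}+\frac1{p_2}\le 1$. That regime covers every application of the corollary in the paper, where $p_1=2\le p_2$.

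The one place your argument does not close is precisely the sub-case $\frac1{p_1}+\frac1{p_2}>1$ that you flag at the end. Bounding $\|\dot{\Delta}_q u\|_{L^\infty}\lesssim 2^{qN/p_1}\|\dot{\Delta}_q u\|_{L^{p_1}}$ and inserting this into $\dot{\Delta}_j\dot{R}(u,v)=\sum_{q\ge j-N_0}\dot{\Delta}_j\big(\dot{\Delta}_q u\,\tilde{\dot{\Delta}}_q v\big)$ gives, after multiplying by $2^{j(s_1+s_2-\frac{N}{p_1})}$ and summing over $j$, the series $\sum_{m\le N_0}2^{m(s_1+s_2-\frac{N}{p_1})}$, which converges only when $s_1+s_2>\frac{N}{p_1}$ --- strictly stronger than the hypothesis $s_1+s_2>0$. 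This is not a defect of your bookkeeping but of the statement itself: for $\frac1{p_1}+\frac1{p_2}>1$ the remainder genuinely requires $s_1+s_2>N\big(\frac1{p_1}+\frac1{p_2}-1\big)$ (consider $p_1=p_2=1$, $s_1=s_2=\frac N2$, where the target index is $0$ and the sum diverges), which is exactly why Proposition \ref{pr2.2}, 2) carries the restriction $\frac1{p_1}+\frac1{p_2}\le1$. The clean resolution is to add the hypothesis $\frac1{p_1}+\frac1{p_2}\le1$ to the corollary (harmless, since the paper never uses it otherwise), rather than to try to repair the remainder estimate under the stated assumptions.
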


We finally need the following two composition lemmas (see \cite{danchin2000global, runst1996sobolev}).
\begin{lem}\label{le2.2}

Let $s>0$, $p\in[1,\infty]$ and $u\in\dot{B}_{p,1}^s\cap L^\infty$. Let $F\in W_{\rm loc}^{[s]+2,\infty}(\mathbb{R}^N)$ such that $F(0)=0$. Then $F(u)\in\dot{B}_{p,1}^s$ and there exists a constant $C=C(s,p,N,F,\|u\|_{L^\infty})$ such that
$$
\|F(u)\|_{\dot{B}_{p,1}^s}\leq C\|u\|_{\dot{B}_{p,1}^s}.
$$
\end{lem}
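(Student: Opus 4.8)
The plan is to adapt the classical argument based on a first–order Taylor expansion combined with the Littlewood--Paley decomposition, following \cite{danchin2000global} (see also \cite{runst1996sobolev}). Since $F(0)=0$ and $\dot{S}_j u\to u$ in $\mathcal{Y}^\prime(\mathbb{R}^N)$ as $j\to+\infty$ while $\dot{S}_j u\to 0$ as $j\to-\infty$ (in a suitable sense, exploiting $s>0$ and $u\in L^\infty$), I would first write the telescoping identity
\[
F(u)=\sum_{j\in\mathbb{Z}}\bigl(F(\dot{S}_{j+1}u)-F(\dot{S}_j u)\bigr)=\sum_{j\in\mathbb{Z}} m_j\,\dot{\Delta}_j u,\qquad m_j:=\int_0^1 F^\prime\bigl(\dot{S}_j u+\tau\dot{\Delta}_j u\bigr)\,d\tau,
\]
where I used $\dot{S}_{j+1}u-\dot{S}_j u=\dot{\Delta}_j u$ and the mean value theorem. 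By Bernstein's inequality (Lemma \ref{le2.1}) one has $\|\dot{S}_j u\|_{L^\infty}+\|\dot{\Delta}_j u\|_{L^\infty}\lesssim\|u\|_{L^\infty}$ uniformly in $j$, so the arguments of $F^\prime$ remain in a fixed ball; hence $\|m_j\|_{L^\infty}\le C$, and differentiating under the integral sign and using the higher–order chain rule together with $F\in W^{[s]+2,\infty}_{\rm loc}$ and the Bernstein bound $\|\nabla^\ell\dot{S}_j u\|_{L^\infty}+\|\nabla^\ell\dot{\Delta}_j u\|_{L^\infty}\lesssim 2^{j\ell}\|u\|_{L^\infty}$, one obtains $\|\nabla^\ell m_j\|_{L^\infty}\le C\,2^{j\ell}$ for every $\ell\le[s]+1$, with $C$ depending on $s,N,F$ and $\|u\|_{L^\infty}$.

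Next I would estimate $\dot{\Delta}_k F(u)=\sum_j\dot{\Delta}_k(m_j\dot{\Delta}_j u)$ by splitting the sum at $j=k-N_0$ for a fixed integer $N_0$. For the ``high'' block $j\ge k-N_0$ I simply bound $\|\dot{\Delta}_k(m_j\dot{\Delta}_j u)\|_{L^p}\le\|m_j\|_{L^\infty}\|\dot{\Delta}_j u\|_{L^p}\lesssim\|\dot{\Delta}_j u\|_{L^p}$. For the ``low'' block $j<k-N_0$ I would use that the kernel of $\dot{\Delta}_k$ has all vanishing moments (its Fourier transform is supported in an annulus), which yields $\|\dot{\Delta}_k g\|_{L^p}\lesssim 2^{-kM}\|\nabla^M g\|_{L^p}$ for every integer $M$; applying this with $M=[s]+1$ to $g=m_j\dot{\Delta}_j u$, and combining the Leibniz rule with the bounds on $\nabla^\ell m_j$ above and Bernstein's inequality for $\dot{\Delta}_j u$, gives $\|\dot{\Delta}_k(m_j\dot{\Delta}_j u)\|_{L^p}\lesssim 2^{(j-k)M}\|\dot{\Delta}_j u\|_{L^p}$.

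Finally, multiplying by $2^{ks}$ and summing over $k$ in $\ell^1$: for the high block, Young's inequality for series gives $\sum_k 2^{ks}\sum_{j\ge k-N_0}\|\dot{\Delta}_j u\|_{L^p}\lesssim\sum_j 2^{js}\|\dot{\Delta}_j u\|_{L^p}$ since $s>0$; for the low block, $\sum_k 2^{ks}\sum_{j<k-N_0}2^{(j-k)M}\|\dot{\Delta}_j u\|_{L^p}\lesssim\sum_j 2^{js}\|\dot{\Delta}_j u\|_{L^p}$ since $M>s$. Both are controlled by $\|u\|_{\dot{B}_{p,1}^s}$, which gives the claimed bound with a constant depending only on $s,p,N,F$ and $\|u\|_{L^\infty}$. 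I expect the only delicate point to be the rigorous justification that the telescoping series converges to $F(u)$ in $\mathcal{Y}^\prime(\mathbb{R}^N)$ --- that is, the behaviour of $\dot{S}_j u$ as $j\to-\infty$ --- which is handled exactly as in \cite{danchin2000global, runst1996sobolev}; the remaining steps are routine bookkeeping with Bernstein's inequality and geometric series.
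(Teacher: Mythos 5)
Your argument is correct: it is the standard ``first linearization'' proof of this composition estimate (telescoping over $F(\dot{S}_{j+1}u)-F(\dot{S}_j u)$, the bounds $\|\nabla^\ell m_j\|_{L^\infty}\lesssim 2^{j\ell}$, and the high/low splitting with $M=[s]+1>s$), which is exactly the argument in the references \cite{danchin2000global, runst1996sobolev} that the paper cites; the paper itself states Lemma \ref{le2.2} without proof. The one point you rightly flag as delicate --- convergence of the telescoping series to $F(u)$ in $\mathcal{Y}^\prime(\mathbb{R}^N)$, i.e.\ the behaviour of $\dot{S}_j u$ as $j\to-\infty$ --- is handled in those references and does not affect the estimate itself.
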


\begin{lem}\label{le2.3}
If $u$ and $v$ belong to $\dot{B}_{2,1}^{\frac{N}{2}}$, $(v-u)\in \dot{B}_{2,1}^s$ for  $s\in (-\frac{N}{2}, \frac{N}{2}]$
and $G\in W_{loc}^{[\frac{N}{2}]+3, \infty}(\mathbb{R}^N)$ satisfies $G^\prime(0)=0$, then $G(v)-G(u)$ belongs to $\dot{B}_{2,1}^s$ and there exists a function of two variables $C$ depending only on $s$, $N$ and $G$, and such that
$$
\|G(v)-G(u)\|_{\dot{B}_{2,1}^s}\leq C(\|u\|_{L^\infty}, \|v\|_{L^\infty})
{\Big(}\|u\|_{\dot{B}_{2,1}^{\frac{N}{2}}}+\|v\|_{\dot{B}_{2,1}^{\frac{N}{2}}}{\Big)}
\|v-u\|_{\dot{B}_{2,1}^{s}}.
$$
\end{lem}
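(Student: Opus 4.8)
The plan is to reduce the difference $G(v)-G(u)$ to a product via the fundamental theorem of calculus, and then to combine the composition estimate of Lemma \ref{le2.2} with the product law of Corollary \ref{co2.1}. Since $G\in W_{\rm loc}^{[\frac{N}{2}]+3,\infty}$ is in particular $C^1$, I would first set $w:=v-u$ and write
$$G(v)-G(u)=w\int_0^1 G'(u+tw)\,dt=:w\,m,\qquad m:=\int_0^1 G'\bigl(u+tw\bigr)\,dt.$$
The estimate then splits into two independent tasks: controlling the multiplier $m$ in the algebra index $\dot{B}_{2,1}^{\frac{N}{2}}$, and estimating the product $w\,m$ in $\dot{B}_{2,1}^{s}$.

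For the multiplier, note that for each fixed $t\in[0,1]$ the argument $u+tw=(1-t)u+tv$ is a convex combination of $u$ and $v$, so $\|u+tw\|_{L^\infty}\le\|u\|_{L^\infty}+\|v\|_{L^\infty}$ and, since $\dot{B}_{2,1}^{\frac{N}{2}}$ is normed, $\|u+tw\|_{\dot{B}_{2,1}^{\frac{N}{2}}}\le\|u\|_{\dot{B}_{2,1}^{\frac{N}{2}}}+\|v\|_{\dot{B}_{2,1}^{\frac{N}{2}}}$. Because $G'(0)=0$ and $G'\in W_{\rm loc}^{[\frac{N}{2}]+2,\infty}$, Lemma \ref{le2.2} applies to each $G'(u+tw)$ with $s=\frac{N}{2}$, $p=2$, $F=G'$ (the hypothesis $u+tw\in L^\infty$ holding through the embedding $\dot{B}_{2,1}^{\frac{N}{2}}\hookrightarrow L^\infty$ of Proposition \ref{pr2.1}(2)), giving $\|G'(u+tw)\|_{\dot{B}_{2,1}^{\frac{N}{2}}}\le C(\|u+tw\|_{L^\infty})\,\|u+tw\|_{\dot{B}_{2,1}^{\frac{N}{2}}}$. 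Passing the $\dot{B}_{2,1}^{\frac{N}{2}}$ norm through the $t$-integral by Minkowski's inequality and inserting the uniform bounds above, I would obtain
$$\|m\|_{\dot{B}_{2,1}^{\frac{N}{2}}}\le C\bigl(\|u\|_{L^\infty},\|v\|_{L^\infty}\bigr)\bigl(\|u\|_{\dot{B}_{2,1}^{\frac{N}{2}}}+\|v\|_{\dot{B}_{2,1}^{\frac{N}{2}}}\bigr).$$

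With $m$ now controlled in the algebra index and $w\in\dot{B}_{2,1}^{s}$, I would close the argument by invoking Corollary \ref{co2.1} with the roles $(s_1,p_1)=(\frac{N}{2},2)$ for $m$ and $(s_2,p_2)=(s,2)$ for $w$: the hypotheses $p_1=p_2=2$, $s_1=\frac{N}{2}\le\frac{N}{2}$, $s_2=s\le\frac{N}{2}$, and $s_1+s_2=s+\frac{N}{2}>0$ are exactly the stated restriction $s\in(-\frac{N}{2},\frac{N}{2}]$, while the output index is $s_1+s_2-\frac{N}{p_1}=s$. This yields $\|w\,m\|_{\dot{B}_{2,1}^{s}}\le C\,\|m\|_{\dot{B}_{2,1}^{\frac{N}{2}}}\|w\|_{\dot{B}_{2,1}^{s}}$, and combining with the bound on $m$ produces precisely the claimed inequality.

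The genuinely delicate point is this product step, whose validity hinges on the two-sided restriction on $s$: the lower bound $s>-\frac{N}{2}$ is exactly what makes the remainder contribution summable (it is the condition $s_1+s_2>0$ of Corollary \ref{co2.1}, equivalently the constraint in Proposition \ref{pr2.2}(2)), whereas the upper endpoint $s=\frac{N}{2}$ is admissible only because it coincides with the algebra index $\frac{N}{p_2}$. Should one prefer to unfold the product via Bony's decomposition $w\,m=\dot{T}_m w+\dot{T}_w m+\dot{R}(w,m)$, the paraproduct $\dot{T}_w m$ would require the off-diagonal bound of Proposition \ref{pr2.2}(1) through $w\in\dot{B}_{\infty,1}^{s-\frac{N}{2}}$ for $s<\frac{N}{2}$, with the endpoint $s=\frac{N}{2}$ treated separately by the algebra property of $\dot{B}_{2,1}^{\frac{N}{2}}$ in Proposition \ref{pr2.1}(4); invoking Corollary \ref{co2.1} directly sidesteps this case distinction. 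A minor bookkeeping remark is that the regularity $G\in W_{\rm loc}^{[\frac{N}{2}]+3,\infty}$ is imposed precisely so that $G'\in W_{\rm loc}^{[\frac{N}{2}]+2,\infty}$ meets the hypothesis $F\in W_{\rm loc}^{[s]+2,\infty}$ of Lemma \ref{le2.2} at $s=\frac{N}{2}$.
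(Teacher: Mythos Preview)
Your argument is correct and follows the standard route for this type of difference--composition estimate. The paper does not actually prove Lemma~\ref{le2.3}: it is stated with a citation to \cite{danchin2000global, runst1996sobolev} and used as a black box (for instance in \eqref{4.12}). Your proof---writing $G(v)-G(u)=(v-u)\int_0^1 G'((1-t)u+tv)\,dt$, invoking Lemma~\ref{le2.2} on $G'$ to control the averaged multiplier in $\dot{B}_{2,1}^{N/2}$, and then closing with the product law of Corollary~\ref{co2.1}---is precisely the argument one finds in those references, so in effect you have supplied what the paper omits. Your discussion of the two-sided constraint $s\in(-\frac{N}{2},\frac{N}{2}]$ and of why the regularity $[\frac{N}{2}]+3$ on $G$ is calibrated to feed $G'$ into Lemma~\ref{le2.2} at index $\frac{N}{2}$ is accurate and to the point.
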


\section{Global existence for initial data near equilibrium}
\setcounter{equation}{0}\setcounter{section}{3}\indent
In this section, we will prove the part (i) of Theorem \ref{th1.1}.
Throughout this paper, we consider only viscous fluids, those for which $\mu>0$ and $\nu>0$. The following proposition plays an important role in obtaining the  estimates of $({\bf u}, b)$.

\begin{prop}\label{pr3.1}{\rm(}see {\rm \cite{bahouri2011fourier}}{\rm )}
Let $\nabla {\bf v}\in L_T^1(L^\infty(\mathbb{R}^N))$, $s\in\mathbb{R}$, and $(a, {\bf u})$ be a solution of the following system
\begin{equation}\label{3.1}
  \left\{\begin{array}{ll}\medskip\D\partial_t a+{\rm div}(\dot{T}_{\bf v}a)+{\rm div}{\bf u}=F,\\
  \partial_t{\bf u}+\dot{T}_{\bf v}\cdot\nabla {\bf u}-\mu\Delta{\bf u}-(\mu+\lambda)\nabla{\rm div}{\bf u}+\nabla a=G,
  \end{array}
  \right.
\end{equation}
on $[0, T)$. Then there exists a constant $C$ depending only on $N$ and $s$, such that the following estimate holds on $[0, T)$:
\be\label{3.2}
\begin{split}
  \|a(t)\|_{\tilde{B}_\nu^{s,\infty}}+&\|{\bf u}(t)\|_{\dot{B}_{2,1}^{s-1}}
  +\int_0^t(\nu\|a\|_{\tilde{B}_\nu^{s,1}}
  +\underline{\nu}\|{\bf u}\|_{\dot{B}_{2,1}^{s+1}})dt^\prime
  \leq C e^{C\|\nabla {\bf v}\|_{L_t^1(L^\infty(\mathbb{R}^N))}}\\
  &\times{\Big(}\|a_0\|_{\tilde{B}_\nu^{s,\infty}}+\|{\bf u}_0\|_{\dot{B}_{2,1}^{s-1}}
  +\int_0^t e^{-C\|\nabla {\bf v}\|_{L_{t^\prime}^1(L^\infty(\mathbb{R}^N))}}
  (\|F\|_{\tilde{B}_\nu^{s,\infty}}+\|G\|_{\dot{B}_{2,1}^{s-1}})dt^\prime{\Big)}.
\end{split}
\ee
\end{prop}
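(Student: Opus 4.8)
The goal is to estimate the linearized compressible part of the system, namely the hyperbolic-parabolic coupling for $(a,{\bf u})$ with convection transported along a divergence-like paraproduct by a given field ${\bf v}$. The plan is to reduce everything to the standard Danchin-type estimate for the linearized barotropic Navier--Stokes operator (the case ${\bf v}\equiv 0$, $F=G=0$) by a frequency-localized energy method in the hybrid Besov norm, and then handle the transport terms $\operatorname{div}(\dot T_{\bf v}a)$ and $\dot T_{\bf v}\cdot\nabla{\bf u}$ via commutator estimates.

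First I would apply $\dot\Delta_j$ to both equations of \eqref{3.1}, writing $a_j=\dot\Delta_j a$, ${\bf u}_j=\dot\Delta_j{\bf u}$, and commuting $\dot\Delta_j$ with the paraproduct operator. This produces, for each $j\in\mathbb Z$, a system
\begin{equation*}
\left\{\begin{array}{l}
\partial_t a_j+\dot S_{j-1}{\bf v}\cdot\nabla a_j+\operatorname{div}{\bf u}_j=F_j+R_j^1,\\[1mm]
\partial_t{\bf u}_j+\dot S_{j-1}{\bf v}\cdot\nabla{\bf u}_j-\mu\Delta{\bf u}_j-(\mu+\lambda)\nabla\operatorname{div}{\bf u}_j+\nabla a_j=G_j+R_j^2,
\end{array}\right.
\end{equation*}
where $R_j^1,R_j^2$ collect the commutators $[\dot S_{j-1}{\bf v}\cdot\nabla,\dot\Delta_j]a$, $[\dot S_{j-1}{\bf v}\cdot\nabla,\dot\Delta_j]{\bf u}$ together with the high-frequency remainder pieces of the paraproduct and the contribution of $\operatorname{div}\dot T_{\bf v}a-\dot T_{\bf v}\cdot\nabla a$. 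By the standard commutator estimate (Lemma~2.97 in \cite{bahouri2011fourier}, say) one gets $\sum_j 2^{j(s-1)}\|R_j^2\|_{L^2}\lesssim \|\nabla{\bf v}\|_{L^\infty}\|{\bf u}\|_{\dot B^{s-1}_{2,1}}$, and similarly $\|R_j^1\|$ is controlled in the $\tilde B^{s,\infty}_\nu$ summation by $\|\nabla{\bf v}\|_{L^\infty}\|a\|_{\tilde B^{s,\infty}_\nu}$. Crucially, the transport term is now skew-symmetric in the $L^2$ inner product up to a term $\|\operatorname{div}(\dot S_{j-1}{\bf v})\|_{L^\infty}\lesssim\|\nabla{\bf v}\|_{L^\infty}$, so it contributes only to the exponential Gronwall factor.

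Next, for the constant-coefficient block I would use the Danchin energy functional adapted to the hybrid norm: work separately on the low frequencies $2^j\lesssim 1/\nu$ (where the system behaves like a damped wave and one uses a mixed functional $\|a_j\|_{L^2}^2+\|{\bf u}_j\|_{L^2}^2 + \beta\, 2^j\langle a_j,\mathcal Q{\bf u}_j\rangle$ with small $\beta$ to produce damping on $a_j$ from the pressure--velocity coupling) and the high frequencies $2^j\gtrsim 1/\nu$ (where $a$ is merely damped by the transport and ${\bf u}$ is parabolic). In each regime one derives a differential inequality of the form $\frac{d}{dt}X_j + c\,\kappa_j X_j \le (\text{forcing}_j) + \|\nabla{\bf v}\|_{L^\infty}X_j$ where $X_j\approx \max\{\nu,2^{-j}\}^{?}\|a_j\|_{L^2}+\|{\bf u}_j\|_{L^2}$ (with the right power matching $\tilde B^{s,\infty}_\nu$ resp. $\dot B^{s-1}_{2,1}$) and the decay rate $\kappa_j$ is $\nu 2^{2j}$ at high frequency and $2^{2j}/\nu$ at low frequency — in both cases $\kappa_j\gtrsim$ the rates appearing in the time-integral term on the left of \eqref{3.2}. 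Integrating via Gronwall with the factor $e^{C\|\nabla{\bf v}\|_{L^1_t(L^\infty)}}$, multiplying by the appropriate $2^{js}$ weight, and summing in $j$ (the $\ell^1$ sum in $j$ is why the $\dot B_{2,1}$ and $\tilde B^{s,\cdot}_\nu$ spaces are used) yields \eqref{3.2}.

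The main obstacle is the low-frequency analysis: extracting genuine (and sharp, rate $2^{2j}/\nu$) damping for the density mode $a$ out of a system in which $a$ itself has no dissipation, which requires the subtle choice of cross-term in the Lyapunov functional and careful tracking of how the weight $\max\{\nu,2^{-j}\}$ interacts with the two frequency regimes — this is exactly the technical heart of Danchin's construction. A secondary, more bookkeeping-type difficulty is checking that all the paraproduct-remainder and commutator error terms $R_j^1,R_j^2$ really do sum to $\|\nabla{\bf v}\|_{L^\infty}$ times the solution norm \emph{in the hybrid space} (not just in $\dot B_{2,1}$), since the weight $\max\{\nu,2^{-j}\}^{1-2/r}$ must be carried through the commutator estimates without loss. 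Since all of these ingredients are already available in \cite{danchin2000global} and \cite[Chapter~10]{bahouri2011fourier}, the proof is essentially a citation-plus-verification, and I would present it as such.
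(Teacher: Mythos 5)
Your proposal is correct and follows essentially the same route as the paper: the paper offers no proof of Proposition \ref{pr3.1} at all, citing \cite{bahouri2011fourier} (Chapter 10) instead, and the frequency-localized energy method you outline --- commutator treatment of the paraproduct transport terms, the low-/high-frequency splitting relative to $2^{-j}\sim\nu$, and the cross-term Lyapunov functional extracting damping on $a$ from the pressure--velocity coupling --- is precisely the argument given in that reference and in \cite{danchin2000global}. One small slip worth noting: in the low-frequency regime $2^{j}\lesssim 1/\nu$ the damping rate of the density mode is $\nu 2^{2j}$ (and $1/\nu$ for $a$ at high frequencies), not $2^{2j}/\nu$, though this does not affect the structure of your argument since your consistency check against the weights in \eqref{3.2} is the operative point.
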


Next, we establish the estimate of the director field ${\bf d}$ in critical Besov sapce.
\begin{prop}\label{pr3.2}
Let $s\in (-\frac{N}{2}, 1+\frac{N}{2})$, and ${\bf d}$ be a solution of the following equation
\bes\label{3.3}
\partial_t{\bf d}+{\bf u}\cdot \nabla{\bf d}-\theta\Delta {\bf d}=\mathcal{M}
\ees
on $[0, T)$, where ${\bf u}\in L_T^1(\dot{B}_{2,1}^{\frac{N}{2}+1})$ and $\mathcal{M}\in L_T^1(\dot{B}_{2,1}^s)$. Then there exists a constant $C$ depending on $N$ and $s$, such that the following estimate holds on $[0, T)$:
 \bes
 \label{3.4}
 \|{\bf d}\|_{\tilde{L}_t^\infty(\dot{B}_{2,1}^s)}+\theta\|{\bf d}\|_{{L}_t^1(\dot{B}_{2,1}^{s+2})}\leq C\exp{\Big(}{C\|{\bf u}\|_{{L}_t^1(\dot{B}_{2,1}^{\frac{N}{2}+1})}}{\Big)}{\Big(}\|{\bf d}_0\|_{\dot{B}_{2,1}^s}+\|\mathcal{M}\|_{L_t^1(\dot{B}_{2,1}^s)}{\Big)}.
 \ees
 \end{prop}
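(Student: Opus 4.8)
The plan is to establish the estimate \eqref{3.4} by a standard Littlewood--Paley energy method applied frequency block by frequency block, treating the term $\mathbf{u}\cdot\nabla\mathbf{d}$ as a perturbation handled via commutator estimates and Gronwall's inequality. First I would apply the dyadic operator $\dot\Delta_q$ to the equation $\partial_t\mathbf{d}+\mathbf{u}\cdot\nabla\mathbf{d}-\theta\Delta\mathbf{d}=\mathcal{M}$, obtaining
\[
\partial_t\dot\Delta_q\mathbf{d}+\mathbf{u}\cdot\nabla\dot\Delta_q\mathbf{d}-\theta\Delta\dot\Delta_q\mathbf{d}=\dot\Delta_q\mathcal{M}+R_q,\qquad R_q:=\mathbf{u}\cdot\nabla\dot\Delta_q\mathbf{d}-\dot\Delta_q(\mathbf{u}\cdot\nabla\mathbf{d}).
\]
Taking the $L^2$ inner product with $\dot\Delta_q\mathbf{d}$ and using the smoothing from $-\theta\Delta$ together with Bernstein's inequality (Lemma \ref{le2.1}) to produce a term $c\theta 2^{2q}\|\dot\Delta_q\mathbf{d}\|_{L^2}^2$ on the left, while the convection term contributes $\tfrac12\int(\mathrm{div}\,\mathbf{u})|\dot\Delta_q\mathbf{d}|^2$ after integration by parts, yields after dividing by $\|\dot\Delta_q\mathbf{d}\|_{L^2}$ the differential inequality
\[
\frac{d}{dt}\|\dot\Delta_q\mathbf{d}\|_{L^2}+c\theta 2^{2q}\|\dot\Delta_q\mathbf{d}\|_{L^2}\le \tfrac12\|\mathrm{div}\,\mathbf{u}\|_{L^\infty}\|\dot\Delta_q\mathbf{d}\|_{L^2}+\|\dot\Delta_q\mathcal{M}\|_{L^2}+\|R_q\|_{L^2}.
\]

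Next I would integrate this in time, multiply by $2^{qs}$, and sum over $q\in\mathbb{Z}$ in $\ell^1$. The term $\int_0^t c\theta 2^{2q}\|\dot\Delta_q\mathbf{d}\|_{L^2}\,dt'$ assembles into $\theta\|\mathbf{d}\|_{L^1_t(\dot B_{2,1}^{s+2})}$, the left-hand supremum in time assembles into $\|\mathbf{d}\|_{\tilde L^\infty_t(\dot B_{2,1}^s)}$, and the source term $\mathcal{M}$ gives $\|\mathcal{M}\|_{L^1_t(\dot B_{2,1}^s)}$. The crucial nonlinear inputs are the two quantities $\|\mathrm{div}\,\mathbf{u}(t')\|_{L^\infty}$ and $\sum_q 2^{qs}\|R_q(t')\|_{L^2}$. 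For the latter I would invoke the standard commutator estimate (this is essentially the convection-term estimate underlying Proposition \ref{pr3.1} and Lemma 2.100 in \cite{bahouri2011fourier}): for $s\in(-\tfrac N2,1+\tfrac N2)$ one has
\[
\sum_{q}2^{qs}\|[\,\mathbf{u}\cdot\nabla,\dot\Delta_q]\mathbf{d}\|_{L^2}\lesssim \|\nabla\mathbf{u}\|_{\dot B_{2,1}^{\frac N2}}\|\mathbf{d}\|_{\dot B_{2,1}^s},
\]
where the range restriction on $s$ is exactly what makes the paraproduct/remainder decomposition of $[\mathbf{u}\cdot\nabla,\dot\Delta_q]\mathbf{d}$ summable. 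Combining this with the embedding $\dot B_{2,1}^{\frac N2}\hookrightarrow L^\infty$ (so $\|\mathrm{div}\,\mathbf{u}\|_{L^\infty}\lesssim\|\mathbf{u}\|_{\dot B_{2,1}^{\frac N2+1}}$) gives
\[
\|\mathbf{d}\|_{\tilde L^\infty_t(\dot B_{2,1}^s)}+\theta\|\mathbf{d}\|_{L^1_t(\dot B_{2,1}^{s+2})}\le C\Big(\|\mathbf{d}_0\|_{\dot B_{2,1}^s}+\|\mathcal{M}\|_{L^1_t(\dot B_{2,1}^s)}+\int_0^t\|\mathbf{u}\|_{\dot B_{2,1}^{\frac N2+1}}\|\mathbf{d}\|_{\tilde L^\infty_{t'}(\dot B_{2,1}^s)}\,dt'\Big),
\]
and Gronwall's lemma applied to $t\mapsto\|\mathbf{d}\|_{\tilde L^\infty_t(\dot B_{2,1}^s)}$ produces the exponential factor $\exp(C\|\mathbf{u}\|_{L^1_t(\dot B_{2,1}^{\frac N2+1})})$, yielding \eqref{3.4}.

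The main technical obstacle is the commutator estimate and, more precisely, keeping careful track of why the precise range $s\in(-\tfrac N2,1+\tfrac N2)$ is needed: when one splits $[\mathbf{u}\cdot\nabla,\dot\Delta_q]\mathbf{d}$ via Bony's decomposition \eqref{2.3}, the paraproduct piece $[\dot T_{\partial_k\mathbf{u}^k},\dot\Delta_q]$-type terms and the remainder piece $\dot R(\mathbf{u},\nabla\dot\Delta_q\mathbf{d})$-type terms each require summability of a geometric series, and the exponents line up only in that window; the lower bound $s>-\tfrac N2$ controls the remainder, while the upper bound $s<1+\tfrac N2$ (equivalently $s-1<\tfrac N2$, so $\nabla\mathbf{d}$ lands in a space where $\mathbf{u}$ multiplies nicely) controls the high--high and paraproduct interactions. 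A secondary minor point is the low-frequency behaviour: since we work in homogeneous spaces and $\dot B_{2,1}^s$ for $s<\tfrac N2$ contains low frequencies without loss, no extra argument beyond the summation above is needed, which is precisely the advantage over the hybrid-space estimate of \cite[Proposition 4.1]{hu2013global} that the authors highlight. Everything else — the energy identity, the use of Bernstein to gain $\theta 2^{2q}$, and the Gronwall step — is routine.
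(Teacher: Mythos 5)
Your proposal is correct and follows essentially the same route as the paper's proof: a dyadic $L^2$ energy estimate with Bernstein's inequality producing the $\theta 2^{2q}$ gain, the commutator estimate of Lemma 2.100 in \cite{bahouri2011fourier} (your $R_q$ is exactly the paper's $[{\bf u},\dot{\Delta}_q]\cdot\nabla{\bf d}$), the embedding $\dot B_{2,1}^{N/2}\hookrightarrow L^\infty$ for the ${\rm div}\,{\bf u}$ term, and Gronwall's inequality. No gaps.
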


 \begin{proof}~Applying $\dot{\Delta}_q$ to \eqref{3.3} and taking the $L^2$ inner product of the resulting equation with $\dot{\Delta}_q{\bf d}$, integrating by part, we deduce that
 \be\nonumber
 \begin{split}
   \D &\frac{1}{2}\frac{d}{dt}\|\dot{\Delta}_q{\bf d}\|_{L^2}^2+\theta\|\nabla\dot{\Delta}_q{\bf d}\|_{L^2}^2\\
   &\quad\leq {\Big(}\frac{1}{2}\|{\rm div}{\bf u}\|_{L^\infty}\|\dot{\Delta}_q{\bf d}\|_{L^2}+\|[{\bf u},\dot{\Delta}_q]\cdot\nabla{\bf d}\|_{L^2}+\|\dot{\Delta}_q\mathcal{M}\|_{L^2}{\Big)}\|\dot{\Delta}_q{\bf d}\|_{L^2}.
 \end{split}
 \ee
Hence, according to Bernstein's inequality, we get, for some universal constant $\kappa$,
\be\nonumber
\begin{split}
&\|\dot{\Delta}_q{\bf d}\|_{L_t^\infty(L^2)}+\theta\kappa2^{2q}\|\dot{\Delta}_q{\bf d}\|_{L_t^1(L^2)}\\
&\quad\leq\|\dot{\Delta}_q{\bf d}_0\|_{L^2}+\int_0^t{\Big(}\frac{1}{2}\|{\rm div}{\bf u}\|_{L^\infty}\|\dot{\Delta}_q{\bf d}\|_{L^2}+\|[{\bf u},\dot{\Delta}_q]\cdot\nabla{\bf d}\|_{L^2}+\|\dot{\Delta}_q\mathcal{M}\|_{L^2}{\Big)}dt^\prime.
\end{split}
\ee
Now, multiplying both sides by $2^{qs}$ and summing over $q$, we end up with
\bes\label{3.5}
\begin{split}
   &\|{\bf d}\|_{\tilde{L}_t^\infty(\dot{B}_{2,1}^s)}+\kappa\theta\|{\bf d}\|_{L_t^1(\dot{B}_{2,1}^{s+2})}\\
   &\quad \leq \|{\bf d}_0\|_{\dot{B}_{2,1}^s}+\int_0^t{\Big(}\frac{1}{2}\|{\rm div}{\bf u}\|_{L^\infty}\|{\bf d}\|_{\dot{B}_{2,1}^s}+\sum_{q\in\mathbb{Z}}2^{qs}\|[{\bf u},\dot{\Delta}_q]\cdot\nabla{\bf d}\|_{L^2}+\|\mathcal{M}\|_{\dot{B}_{2,1}^s}{\Big)}dt^\prime.
\end{split}
\ees
According to Lemma 2.100 in \cite{bahouri2011fourier}, we get, for $-\frac{N}{2}<s<\frac{N}{2}+1$,
\be\label{3.6}
\sum_{q\in\mathbb{Z}}2^{qs}\|[{\bf u},\dot{\Delta}_q]\cdot\nabla{\bf d}\|_{L^2}\leq
C\|{\bf u}\|_{\dot{B}_{2,1}^{\frac{N}{2}+1}}\|{\bf d}\|_{\dot{B}_{2,1}^s}.
\ee
Substituting \eqref{3.6} into \eqref{3.5} and using the embedding $\dot{B}_{2,1}^{\frac{N}{2}}\hookrightarrow L^\infty$, we get
\bes\nonumber
\begin{split}
   &\|{\bf d}\|_{\tilde{L}_t^\infty(\dot{B}_{2,1}^s)}+\kappa\theta\|{\bf d}\|_{L_t^1(\dot{B}_{2,1}^{s+2})}\\
   &\quad \leq \|{\bf d}_0\|_{\dot{B}_{2,1}^s}+\|\mathcal{M}\|_{L_t^1(\dot{B}_{2,1}^s)}
   +C\int_0^t\|{\bf u}\|_{\dot{B}_{2,1}^{\frac{N}{2}+1}}\|{\bf d}\|_{\dot{B}_{2,1}^s}dt^\prime,
\end{split}
\ees
taking advantage of Gronwall's inequality, we obtain \eqref{3.4} immediately. This competes the proof of Proposition \ref{pr3.2}.
\end{proof}
{\bf Proof of the part (i) of Theorem \ref{th1.1}}.  We are going to prove that if the initial data
\bes\label{3.7}
\|\rho_0-1\|_{\tilde{B}_\nu^{\frac{N}{2},\infty}}+\|{\bf u}_0\|_{\dot{B}_{2,1}^{\frac{N}{2}-1}}+\|{\bf d}_0-\hat{\bf d}\|_{\dot{B}_{2,1}^{\frac{N}{2}}}\leq \eta,
\ees
for some sufficiently small $\eta$, there exists a positive constant $\Gamma$ such that
\be\label{3.8}
\|(\rho-1, {\bf u}, {\bf d}-\hat{\bf d})\|_{\mathfrak{B}_\nu^{\frac{N}{2}}}\leq \Gamma\eta.
\ee
This uniform estimates will enable us to extend the local solution $(\rho-1, {\bf u}, {\bf d})$ obtained by using a Friedrichs method as in \cite{bahouri2011fourier} to be a global one. To this end, we use a contradiction argument. Define
\be\label{3.9}
T_0=\sup{\Big\{}T\in [0,\infty): \|(\rho-1, {\bf u}, {\bf d}-\hat{\bf d})\|_{\mathfrak{B}_\nu^{\frac{N}{2}}(T)}\leq \Gamma\eta {\Big\}}
\ee
with $\Gamma$ to be determined later.

We note that $\nabla{\bf d}=\nabla({\bf d}-\hat{\bf d})$ because $\hat{\bf d}\in \mathbb{S}^{N-1}$ is a constant vector. Letting $b=\rho-1$, we rewrite the system \eqref{1.1} as
\be\label{3.10}
\left\{\begin{array}{ll}\medskip\D\partial_t b+{\rm div}{\bf u}+{\rm div}(\dot{T}_{\bf u}b)=-{\rm div}(\dot{T}_b^\prime{\bf u}),\\
\medskip\D\partial_t{\bf u}+\dot{T}_{\bf u}\cdot\nabla{\bf u}-\mathcal{A}{\bf u}+\nabla b=K(b)\nabla b-I(b)\mathcal{A}{\bf u}-\dot{T}_{\partial_i{\bf u}}^\prime u^i+H,\\
\D \partial_t({\bf d}-\hat{\bf d})+{\bf u}\cdot\nabla({\bf d}-\hat{\bf d})-\theta\Delta({\bf d}-\hat{\bf d})=J,
\end{array}
\right.
\ee
where
\be\nonumber
H:=-\frac{\xi}{1+b}{\rm div}{\Big(}\nabla({\bf d-\hat{\bf d}})\odot\nabla({\bf d}-\hat{\bf d})-\frac{1}{2}|\nabla({\bf d}-\hat{\bf d})|^2{\bf I}{\Big)},
\ee

\be\nonumber
J:=\theta|\nabla({\bf d}-\hat{\bf d})|^2({\bf d}-\hat{\bf d})+\theta|\nabla({\bf d}-\hat{\bf d})|^2\hat{\bf d},
\ee
and $\mathcal{A}:=\mu\Delta+(\mu+\lambda)\nabla{\rm div}$,~~$I(b):=\frac{b}{1+b}$,~~
$K(b):=1-\frac{P^\prime(1+b)}{1+b}$.

Suppose that $T_0<\infty$. We apply the linear estimates in Propositions \ref{pr3.1} and \ref{pr3.2} to the solutions of reformulated system \eqref{3.10} such that for all $t\in [0, T_0]$, the following estimates hold:
\be\label{3.11}
\begin{split}
&\medskip\D\|b(t)\|_{\tilde{B}_\nu^{\frac{N}{2},\infty}}+\|{\bf u}(t)\|_{\dot{B}_{2,1}^{\frac{N}{2}-1}}
+\int_0^{T_0}{\Big(}\nu\|b\|_{\tilde{B}_\nu^{\frac{N}{2},1}}
+\underline{\nu}\|{\bf u}\|_{\dot{B}_{2,1}^{\frac{N}{2}+1}}{\Big)}dt^\prime\\[2mm]
&\medskip\D\quad\leq Ce^{\|\nabla{\bf u}\|_{L_{T_0}^1(L^\infty(\mathbb{R}^N))}}{\Big\{}
\|b_0\|_{\tilde{B}_\nu^{\frac{N}{2},\infty}}+\|{\bf u}_0\|_{\dot{B}_{2,1}^{\frac{N}{2}-1}}+\|{\rm div}(\dot{T}_b^\prime{\bf u})\|_{L_{T_0}^1(\tilde{B}_\nu^{\frac{N}{2},\infty})}\\[2mm]
&\medskip\D\quad\quad+\|K(b)\nabla b\|_{L_{T_0}^1(\dot{B}_{2,1}^{\frac{N}{2}-1})}
+\|I(b)\mathcal{A{\bf u}}\|_{L_{T_0}^1(\dot{B}_{2,1}^{\frac{N}{2}-1})}
+\|\dot{T}_{\partial_i{\bf u}}^\prime u^i\|_{L_{T_0}^1(\dot{B}_{2,1}^{\frac{N}{2}-1})}
+\|H\|_{L_{T_0}^1(\dot{B}_{2,1}^{\frac{N}{2}-1})}{\Big\}},
\end{split}
\ee
and
\be\label{3.12}
\begin{split}
&\medskip\D\|{\bf d}(t)-\hat{\bf d}\|_{\dot{B}_{2,1}^{\frac{N}{2}}}+\theta\int_0^{T_0}\|{\bf d}-\hat{\bf d}\|_{\dot{B}_{2,1}^{\frac{N}{2}+2}}dt^\prime\\[2mm]
&\medskip\D\quad\leq C\exp{\Big(}C\|{\bf u}\|_{L_{T_0}^1({\dot{B}_{2,1}^{\frac{N}{2}+1}})}{\Big)}{\Big(}\|{\bf d}_0-\hat{\bf d}\|_{\dot{B}_{2,1}^{\frac{N}{2}}}+\|J\|_{L_{T_0}^1(\dot{B}_{2,1}^{\frac{N}{2}})}{\Big)}.
\end{split}
\ee

In what follows, we derive estimates for the nonlinear terms one by one. Similar to the case of isentropic Navier-Stokes equations \cite{bahouri2011fourier}, by Proposition \ref{pr2.2}, Corollary \ref{co2.1} and Lemma \ref{le2.2}, we have the following inequalities:
\be\label{3.13}
\|{\rm div}(\dot{T}_b^\prime{\bf u})\|_{L_{T_0}^1(\tilde{B}_\nu^{\frac{N}{2},\infty})}\lesssim \|b\|_{L_{T_0}^\infty(\tilde{B}_\nu^{\frac{N}{2},\infty})}\|{\bf u}\|_{L_{T_0}^1(\dot{B}_{2,1}^{\frac{N}{2}+1})},
\ee
\medskip
\be\label{3.14}
\|K(b)\nabla b\|_{L_{T_0}^1(\dot{B}_{2,1}^{\frac{N}{2}-1})}
\lesssim\|b\|_{L_{T_0}^2(\dot{B}_{2,1}^{\frac{N}{2}})}^2
\lesssim\|b\|_{L_{T_0}^\infty(\tilde{B}_\nu^{\frac{N}{2},\infty})}
\|b\|_{L_{T_0}^1(\tilde{B}_\nu^{\frac{N}{2},1})},
\ee
\medskip
\be\label{3.15}
\|I(b)\mathcal{A{\bf u}}\|_{L_{T_0}^1(\dot{B}_{2,1}^{\frac{N}{2}-1})}
\lesssim\|b\|_{L_{T_0}^\infty(\tilde{B}_\nu^{\frac{N}{2},\infty})}
\|{\bf u}\|_{L_{T_0}^1(\dot{B}_{2,1}^{\frac{N}{2}+1})},
\ee
\medskip
\be\label{3.16}
\|\dot{T}_{\partial_i{\bf u}}^\prime u^i\|_{L_{T_0}^1(\dot{B}_{2,1}^{\frac{N}{2}-1})}\lesssim\|{\bf u}\|_{L_{T_0}^\infty(\dot{B}_{2,1}^{\frac{N}{2}-1})}\|{\bf u}\|_{L_{T_0}^1(\dot{B}_{2,1}^{\frac{N}{2}+1})}.
\ee
Compared with the isentropic Navier-Stokes equations, the new terms  can be estimated as follows:
\be\label{3.17}
\begin{split}
\|H\|_{L_{T_0}^1(\dot{B}_{2,1}^{\frac{N}{2}-1})}
&={\Big\|}\frac{\xi}{1+b}{\rm div}{\Big(}\nabla({\bf d}-\hat{\bf d})\odot\nabla({\bf d}-\hat{\bf d})-\frac{1}{2}|\nabla({\bf d}-\hat{\bf d})|^2{\bf I}{\Big)}{\Big\|}_{L_{T_0}^1(\dot{B}_{2,1}^{\frac{N}{2}-1})}\\[2mm]
&\leq \xi{\Big\|}{{\rm div}{\Big(}\nabla({\bf d-\hat{\bf d}})\odot\nabla({\bf d}-\hat{\bf d})-\frac{1}{2}|\nabla({\bf d}-\hat{\bf d})|^2{\bf I}{\Big)}}{\Big\|}_{L_{T_0}^1(\dot{B}_{2,1}^{\frac{N}{2}-1})}\\[2mm]
&\quad+\xi{\Big\|}{I(b){\rm div}{\Big(}\nabla({\bf d-\hat{\bf d}})\odot\nabla({\bf d}-\hat{\bf d})-\frac{1}{2}|\nabla({\bf d}-\hat{\bf d})|^2{\bf I}{\Big)}}{\Big\|}_{L_{T_0}^1(\dot{B}_{2,1}^{\frac{N}{2}-1})}\\[2mm]
&\lesssim {\Big(}1+\|b\|_{L_{T_0}^\infty(\dot{B}_{2,1}^{\frac{N}{2}})}{\Big)}{\Big\|}{\nabla({\bf d-\hat{\bf d}})\odot\nabla({\bf d}-\hat{\bf d})-\frac{1}{2}|\nabla({\bf d}-\hat{\bf d})|^2{\bf I}}{\Big\|}_{L_{T_0}^1(\dot{B}_{2,1}^{\frac{N}{2}})}\\[2mm]
&\lesssim {\Big(}1+\|b\|_{L_{T_0}^\infty(\tilde{B}_\nu^{\frac{N}{2},\infty})}{\Big)}
\|\nabla({\bf d}-\hat{\bf d})\|_{L_{T_0}^2(\dot{B}_{2,1}^{\frac{N}{2}})}^2\\[2mm]
&\lesssim {\Big(}1+\|b\|_{L_{T_0}^\infty(\tilde{B}_\nu^{\frac{N}{2},\infty})}{\Big)}
\|{\bf d}-\hat{\bf d}\|_{L_{T_0}^\infty(\dot{B}_{2,1}^{\frac{N}{2}})}\|{\bf d}-\hat{\bf d}\|_{L_{T_0}^1(\dot{B}_{2,1}^{\frac{N}{2}+2})},
\end{split}
\ee

\be\label{3.18}
\begin{split}
\|J\|_{L_{T_0}^1(\dot{B}_{2,1}^{\frac{N}{2}})}&=\theta\||\nabla({\bf d}-\hat{\bf d})|^2({\bf d}-\hat{\bf d})\|_{L_{T_0}^1(\dot{B}_{2,1}^{\frac{N}{2}})}+\theta\||\nabla({\bf d}-\hat{\bf d})|^2\hat{\bf d}\|_{L_{T_0}^1(\dot{B}_{2,1}^{\frac{N}{2}})}:=J_1+J_2.
\end{split}
\ee
For the estimate of $J_1$, it follows that
\be\label{3.19}
\begin{split}
J_1&=\theta\||\nabla({\bf d}-\hat{\bf d})|^2({\bf d}-\hat{\bf d})\|_{L_{T_0}^1(\dot{B}_{2,1}^{\frac{N}{2}})}\\[2mm]
&\lesssim \||\nabla({\bf d}-\hat{\bf d})|^2\|_{L_{T_0}^1(\dot{B}_{2,1}^{\frac{N}{2}})}
\|{\bf d}-\hat{\bf d}\|_{L_{T_0}^\infty(\dot{B}_{2,1}^{\frac{N}{2}})}\\[2mm]
&\lesssim \|{\bf d}-\hat{\bf d}\|_{L_{T_0}^2(\dot{B}_{2,1}^{\frac{N}{2}+1})}^2
\|{\bf d}-\hat{\bf d}\|_{L_{T_0}^\infty(\dot{B}_{2,1}^{\frac{N}{2}})}\\[2mm]
&\lesssim \|{\bf d}-\hat{\bf d}\|_{L_{T_0}^1(\dot{B}_{2,1}^{\frac{N}{2}+2})}
\|{\bf d}-\hat{\bf d}\|_{L_{T_0}^\infty(\dot{B}_{2,1}^{\frac{N}{2}})}^2.
\end{split}
\ee
For $J_2$, we have by the definition of Besov's spaces
\be\label{3.20}
\begin{split}
J_2&=\theta\||\nabla({\bf d}-\hat{\bf d})|^2\hat{\bf d}\|_{L_{T_0}^1(\dot{B}_{2,1}^{\frac{N}{2}})}
\lesssim \|{\bf d}-\hat{\bf d}\|_{L_{T_0}^2(\dot{B}_{2,1}^{\frac{N}{2}+1})}^2\\[2mm]
&\lesssim \|{\bf d}-\hat{\bf d}\|_{L_{T_0}^\infty(\dot{B}_{2,1}^{\frac{N}{2}})} \|{\bf d}-\hat{\bf d}\|_{L_{T_0}^1(\dot{B}_{2,1}^{\frac{N}{2}+2})}.
\end{split}
\ee
Plugging inequalities  \eqref{3.13}-\eqref{3.20} in \eqref{3.11} and \eqref{3.12}, we thus get
\be\label{3.21}
\begin{split}
&\medskip\D\|b\|_{L_{T_0}^\infty(\tilde{B}_\nu^{\frac{N}{2},\infty})}+\|{\bf u}(t)\|_{L_{T_0}^\infty(\dot{B}_{2,1}^{\frac{N}{2}-1})}
+\int_0^{T_0}{\Big(}\nu\|b\|_{\tilde{B}_\nu^{\frac{N}{2},1}}
+\underline{\nu}\|{\bf u}\|_{\dot{B}_{2,1}^{\frac{N}{2}+1}}{\Big)}dt^\prime\\[2mm]
&\medskip\D\quad\lesssim e^{\|\nabla{\bf u}\|_{L_{T_0}^1(L^\infty(\mathbb{R}^N))}}{\Big\{}
\|b_0\|_{\tilde{B}_\nu^{\frac{N}{2},\infty}}+\|{\bf u}_0\|_{\dot{B}_{2,1}^{\frac{N}{2}-1}}+\|b\|_{L_{T_0}^\infty(\tilde{B}_\nu^{\frac{N}{2},\infty})}\|{\bf u}\|_{L_{T_0}^1(\dot{B}_{2,1}^{\frac{N}{2}+1})}\\[2mm]
&\medskip\D\quad\quad+\|b\|_{L_{T_0}^\infty(\tilde{B}_\nu^{\frac{N}{2},\infty})}
\|b\|_{L_{T_0}^1(\tilde{B}_\nu^{\frac{N}{2},1})}
+\|{\bf u}\|_{L_{T_0}^\infty(\dot{B}_{2,1}^{\frac{N}{2}-1})}\|{\bf u}\|_{L_{T_0}^1(\dot{B}_{2,1}^{\frac{N}{2}+1})}\\[2mm]
&\quad\quad+{\Big(}1+\|b\|_{L_{T_0}^\infty(\tilde{B}_\nu^{\frac{N}{2},\infty})}{\Big)}
\|{\bf d}-\hat{\bf d}\|_{L_{T_0}^\infty(\dot{B}_{2,1}^{\frac{N}{2}})}\|{\bf d}-\hat{\bf d}\|_{L_{T_0}^1(\dot{B}_{2,1}^{\frac{N}{2}+2})}{\Big\}},
\end{split}
\ee
and
\be\label{3.22}
\begin{split}
&\medskip\D\|{\bf d}-\hat{\bf d}\|_{L_{T_0}^\infty\dot{B}_{2,1}^{\frac{N}{2}}}+\theta\int_0^{T_0}\|{\bf d}-\hat{\bf d}\|_{\dot{B}_{2,1}^{\frac{N}{2}+2}}dt^\prime\\[2mm]
&\medskip\D\quad\lesssim \exp{\Big(}C\|{\bf u}\|_{L_{T_0}^1({\dot{B}_{2,1}^{\frac{N}{2}+1}})}{\Big)}\Big\{\|{\bf d}_0-\hat{\bf d}\|_{\dot{B}_{2,1}^{\frac{N}{2}}}+\|{\bf d}-\hat{\bf d}\|_{L_{T_0}^1(\dot{B}_{2,1}^{\frac{N}{2}+2})}
\|{\bf d}-\hat{\bf d}\|_{L_{T_0}^\infty(\dot{B}_{2,1}^{\frac{N}{2}})}^2\\[2mm]
&\quad\quad+\|{\bf d}-\hat{\bf d}\|_{L_{T_0}^1(\dot{B}_{2,1}^{\frac{N}{2}+2})}
\|{\bf d}-\hat{\bf d}\|_{L_{T_0}^\infty(\dot{B}_{2,1}^{\frac{N}{2}})}\Big\}.
\end{split}
\ee
The above two inequalities combined with the definition of $T_0$ yield that
\be\label{3.23}
\|(b, {\bf u}, {\bf d}-\hat{\bf d})\|_{\mathfrak{B}_\nu^{\frac{N}{2}}({T_0})}\leq C_1e^{C_1\Gamma\eta}(\eta+\Gamma^2\eta^2+\Gamma^3\eta^3).
\ee
We choose $\Gamma=4C_1$ and $\eta>0$ satisfying
\be\label{3.24}
e^{C_1\Gamma\eta}<2,~~~\Gamma^2\eta\leq\frac{1}{2},~~~\Gamma^3\eta^2\leq\frac{1}{2}.
\ee
Consequently, it follows from \eqref{3.23} and the above choices of $\Gamma$ and $\eta$ that
\be\nonumber
\|(b, {\bf u}, {\bf d}-\hat{\bf d})\|_{\mathfrak{B}_\nu^{\frac{N}{2}}({T_0})}<\Gamma\eta,
\ee
which is a contradiction with the definition of $T_0$. Hence, we can deduce that $T_0=\infty$. Global existence is thus proved.\hspace{11.5cm}$\Box$

\section{Uniqueness}
\setcounter{equation}{0}\setcounter{section}{4}\indent
In this section, we will prove the uniqueness of the solution to system \eqref{1.1}, i.e., the part (ii) of Theorem \ref{th1.1}.

{\bf Proof of the part (ii) of Theorem \ref{th1.1}}.

{\bf Case $N\geq 3$}:
  Assume that $(b_i, {\bf u}_i, {\bf d}_i)_{i=1,2}$ in $\mathfrak{B}_\nu^{\frac{N}{2}}$ solve \eqref{1.1} with the same initial data. Denote
\bess
\delta b=b_2-b_1,~~~~\delta{\bf u}={\bf u}_2-{\bf u}_1,~~~~\delta{\bf d}={\bf d}_2-{\bf d}_1.
\eess
Then $(\delta b,\delta{\bf u},\delta{\bf d})\in \mathfrak{B}_\nu^{\frac{N}{2}}(T)$ for all $T>0$. On the other hand, since $(b_i, {\bf u}_i, {\bf d}_i)_{i=1,2}$ are solutions to system \eqref{1.1} with the same initial data, $(\delta b,\delta{\bf u},\delta{\bf d})$ solves
\be\label{4.1}
\left\{\begin{array}{ll}
\partial_t\delta b+{\rm div}(\dot{T}_{{\bf u}_2}\cdot\delta b)+{\rm div}\delta{\bf u}=\delta F,\\[2mm]
\partial_t\delta{\bf u}+\dot{T}_{{\bf u}_2}\cdot\nabla\delta{\bf u}-\mathcal{A}\delta{\bf u}+\nabla\delta b=\delta G,\\[2mm]
\partial_t\delta{\bf d}+{\bf u}_2\cdot\nabla\delta{\bf d}-\theta\Delta\delta{\bf d}=\delta\mathcal{M},
\end{array}
\right.
\ee
with
\bess
\begin{split}
\delta F&=-\delta{\bf u}\cdot\nabla b_1-b_1{\rm div}\delta{\bf u}-{\rm div}(\dot{T}_{\delta b}^\prime{\bf u}_2),\\[2mm]
\delta G&=-\dot{T}_{\partial_i\delta{\bf u}}^\prime u_2^i-\delta{\bf u}\cdot\nabla{\bf u}_1-(I(b_2)-I(b_1))\mathcal{A}{\bf u}_2-I(b_1)\mathcal{A}\delta{\bf u}\\[2mm]
&\quad +\xi(I(b_2)-I(b_1)){\rm div}{\Big(}\nabla({\bf d}_2-\hat{\bf d})\odot\nabla({\bf d}_2-\hat{\bf d})-\frac{1}{2}|\nabla({\bf d}_2-\hat{\bf d})|^2{\bf I}{\Big)}\\[2mm]
&\quad +\frac{\xi}{1+b_1}{\rm div}{\Big(}\nabla({\bf d}_1-\hat{\bf d})\odot\nabla\delta{\bf d}+\nabla({\bf d}_2-\hat{\bf d})\odot\nabla\delta{\bf d}\\[2mm]
&\quad -\frac{1}{2}{\bf I}(\nabla({\bf d}_1-\hat{\bf d}):\nabla\delta{\bf d}+\nabla({\bf d}_2-\hat{\bf d}):\nabla\delta{\bf d}){\Big)}+\nabla(\mathcal{K}(b_2)-\mathcal{K}(b_1)),
\end{split}
\eess
and
\bess
\delta\mathcal{M}=-\delta{\bf u}\cdot\nabla{\bf d}_1+\theta{\Big(}|\nabla({\bf d}_1-\hat{\bf d})|^2\delta{\bf d}+(\nabla({\bf d}_1-\hat{\bf d}):\nabla\delta{\bf d}+\nabla({\bf d}_2-\hat{\bf d}):\nabla\delta{\bf d}){\bf d}_2{\Big)},
\eess
where we used the notations $A:B=\sum_{i,j=1}^NA_{ij}B_{ij}$ and $\mathcal{K}(z)=\int_0^zK(y)dy$.
Applying Propositions \ref{pr3.1} and \ref{pr3.2} to the system \eqref{4.1}, we have
\be\label{4.2}
\begin{split}
&\|(\delta b, \delta{\bf u}, \delta{\bf d})\|_{\mathfrak{B}_\nu^{\frac{N}{2}-1}(T)}\\[2mm]
&\quad\leq C\exp{\Big(}C\|{\bf u}_2\|_{L_T^1(\dot{B}_{2,1}^{\frac{N}{2}+1})}{\Big)}{\Big(}\|\delta F\|_{L_T^1(\tilde{B}_\nu^{\frac{N}{2}-1,\infty})}+\|\delta G\|_{L_T^1(\dot{B}_{2,1}^{\frac{N}{2}-2})}+\|\delta \mathcal{M}\|_{L_T^1(\dot{B}_{2,1}^{\frac{N}{2}-1})}{\Big)}.
\end{split}
\ee
 As in \cite{danchin2000global}, we could get that $b_i\in C(\mathbb{R}^+;\dot{B}_{2,1}^{\frac{N}{2}-1})\,(i=1,2)$. Indeed, $b_0\in\dot{B}_{2,1}^{\frac{N}{2}-1}$, $b_i\in L^\infty(\mathbb{R}^+;\dot{B}_{2,1}^{\frac{N}{2}-1})$ and $\partial_tb_i=-{\bf u}_i\cdot\nabla b_i-{\rm div}{\bf u}_i-b_i{\rm div}{\bf u}_i\in L^2(\mathbb{R}^+;\dot{B}_{2,1}^{\frac{N}{2}-1})$. And because $b_i\in C(\dot{B}_{2,1}^{\frac{N}{2}-1})\cap L_{loc}^\infty\dot{B}_{2,1}^{\frac{N}{2}}$, we have $b_i\in C([0,+\infty)\times\mathbb{R}^N)$. On the other hand, if \eqref{1.10} is satisfied for some $\eta$ suitably small, we have
 \be\label{4.18}
 |b_i(t,x)|\leq \frac{1}{4}~~~~{\rm for\,\, all}~~t\geq 0~~{\rm and}~~x\in \mathbb{R}^N.
 \ee

In the following, we estimate the terms $\delta F, \delta G$ and $\delta\mathcal{M}$ respectively. Firstly, according to Proposition \ref{pr2.2} and
Corollary \ref{co2.1}, one gets
\be
\|\delta F\|_{L_T^1(\tilde{B}_\nu^{\frac{N}{2}-1,\infty})}\lesssim\|\delta{\bf u}\|_{L_T^1(\dot{B}_{2,1}^{\frac{N}{2}})}
\|b_1\|_{L_T^\infty(\tilde{B}_\nu^{\frac{N}{2},\infty})}
+\|\delta b\|_{L_T^\infty(\tilde{B}_\nu^{\frac{N}{2}-1,\infty})}\|{\bf u}_2\|_{L_T^1(\dot{B}_{2,1}^{\frac{N}{2}+1})}.
\ee\label{4.4}
Similarly, the terms of  $\delta G$ could be estimated as follows:
\be\label{4.5}
\|\dot{T}_{\partial_i\delta{\bf u}}^\prime u_2^i\|_{L_T^1(\dot{B}_{2,1}^{\frac{N}{2}-2})}\lesssim \|\partial_i\delta{\bf u}\|_{L_T^2(\dot{B}_{2,1}^{\frac{N}{2}-2})}\|{\bf u}_2\|_{L_T^2(\dot{B}_{2,1}^{\frac{N}{2}})}\lesssim \|\delta{\bf u}\|_{L_T^2(\dot{B}_{2,1}^{\frac{N}{2}-1})}\|{\bf u}_2\|_{L_T^2(\dot{B}_{2,1}^{\frac{N}{2}})},
\ee

\be\label{4.6}
\|\delta{\bf u}\cdot\nabla{\bf u}_1\|_{L_T^1(\dot{B}_{2,1}^{\frac{N}{2}-2})}\lesssim
\|\delta{\bf u}\|_{L_T^2(\dot{B}_{2,1}^{\frac{N}{2}-1})}\|\nabla{\bf u}_1\|_{L_T^2(\dot{B}_{2,1}^{\frac{N}{2}-1})}\lesssim \|\delta{\bf u}\|_{L_T^2(\dot{B}_{2,1}^{\frac{N}{2}-1})}\|{\bf u}_1\|_{L_T^2(\dot{B}_{2,1}^{\frac{N}{2}})},
\ee

\be\label{4.7}
\begin{split}
&{\Big\|}(I(b_2)-I(b_1))\mathcal{A}{\bf u}_2{\Big\|}_{L_T^1(\dot{B}_{2,1}^{\frac{N}{2}-2})}
\lesssim \|I(b_2)-I(b_1)\|_{L_T^\infty(\dot{B}_{2,1}^{\frac{N}{2}-1})}
\|\mathcal{A}{\bf u}_2\|_{L_T^1(\dot{B}_{2,1}^{\frac{N}{2}-1})}\\[2mm]
&\quad \lesssim {\Big(}1+\|b_1\|_{L_T^\infty(\dot{B}_{2,1}^{\frac{N}{2}})}
+\|b_2\|_{L_T^\infty(\dot{B}_{2,1}^{\frac{N}{2}})}{\Big)}\|\nabla^2{\bf u}_2\|_{L_T^1(\dot{B}_{2,1}^{\frac{N}{2}-1})}\|\delta b\|_{L_T^\infty(\dot{B}_{2,1}^{\frac{N}{2}-1})}\\[2mm]
&\quad \lesssim {\Big(}1+\|b_1\|_{L_T^\infty(\dot{B}_{2,1}^{\frac{N}{2}})}
+\|b_2\|_{L_T^\infty(\dot{B}_{2,1}^{\frac{N}{2}})}{\Big)}\|{\bf u}_2\|_{L_T^1(\dot{B}_{2,1}^{\frac{N}{2}+1})}\|\delta b\|_{L_T^\infty(\dot{B}_{2,1}^{\frac{N}{2}-1})},
\end{split}
\ee

\be\label{4.8}
\|I(b_1)\mathcal{A}\delta{\bf u}\|_{L_T^1(\dot{B}_{2,1}^{\frac{N}{2}-2})}\lesssim
\|b_1\|_{L_T^\infty(\dot{B}_{2,1}^{\frac{N}{2}})}\|\nabla^2\delta {\bf u}\|
_{L_T^1(\dot{B}_{2,1}^{\frac{N}{2}-2})}\lesssim
\|b_1\|_{L_T^\infty(\dot{B}_{2,1}^{\frac{N}{2}})}\|\delta {\bf u}\|
_{L_T^1(\dot{B}_{2,1}^{\frac{N}{2}})},
\ee

\be\label{4.9}
\begin{split}
&{\Big\|}\xi(I(b_2)-I(b_1)){\rm div}{\Big(}\nabla({\bf d}_2-\hat{\bf d})\odot\nabla({\bf d}_2-\hat{\bf d})-\frac{1}{2}|\nabla({\bf d}_2-\hat{\bf d})|^2{\bf I}{\Big)}{\Big\|}_{L_T^1(\dot{B}_{2,1}^{\frac{N}{2}-2})}\\[2mm]
&\quad\lesssim\|I(b_2)-I(b_1)\|_{L_T^\infty(\dot{B}_{2,1}^{\frac{N}{2}-1})}
\||\nabla({\bf d}_2-\hat{\bf d})|^2\|_{L_T^1(\dot{B}_{2,1}^{\frac{N}{2}})}\\[2mm]
&\quad\lesssim {\Big(}1+\|b_1\|_{L_T^\infty(\dot{B}_{2,1}^{\frac{N}{2}})}
+\|b_2\|_{L_T^\infty(\dot{B}_{2,1}^{\frac{N}{2}})}{\Big)}\|{\bf d}_2-\hat{\bf d}\|_{L_T^2(\dot{B}_{2,1}^{\frac{N}{2}+1})}^2\|\delta b\|_{L_T^\infty(\dot{B}_{2,1}^{\frac{N}{2}-1})},
\end{split}
\ee

\be\label{4.10}
\begin{split}
&{\Big\|}\frac{\xi}{1+b_1}{\rm div}{\Big(}\nabla({\bf d}_1-\hat{\bf d})\odot\nabla\delta{\bf d}+\nabla({\bf d}_2-\hat{\bf d})\odot\nabla\delta{\bf d}{\Big)}{\Big\|}_{L_T^1(\dot{B}_{2,1}^{\frac{N}{2}-2})}\\[2mm]
&\quad\lesssim {\Big(}1+\|b_1\|_{L_T^\infty(\dot{B}_{2,1}^{\frac{N}{2}})}{\Big)}
\|\nabla\delta{\bf d}\|_{L_T^2(\dot{B}_{2,1}^{\frac{N}{2}-1})}
{\Big(}\|\nabla({\bf d}_1-\hat{\bf d})\|_{L_T^2(\dot{B}_{2,1}^{\frac{N}{2}})}+\|\nabla({\bf d}_2-\hat{\bf d})\|_{L_T^2(\dot{B}_{2,1}^{\frac{N}{2}})}{\Big)}\\[2mm]
&\quad\lesssim {\Big(}1+\|b_1\|_{L_T^\infty(\dot{B}_{2,1}^{\frac{N}{2}})}{\Big)}
\|\delta{\bf d}\|_{L_T^2(\dot{B}_{2,1}^{\frac{N}{2}})}
{\Big(}\|{\bf d}_1-\hat{\bf d}\|_{L_T^2(\dot{B}_{2,1}^{\frac{N}{2}+1})}+\|{\bf d}_2-\hat{\bf d}\|_{L_T^2(\dot{B}_{2,1}^{\frac{N}{2}+1})}{\Big)}.
\end{split}
\ee
Similar to \eqref{4.10}, there holds that
\be\label{4.11}
\begin{split}
&{\Big\|}\frac{\xi}{1+b_1}{\rm div}{\Big(}\frac{1}{2}{\bf I}(\nabla({\bf d}_1-\hat{\bf d}):\nabla\delta{\bf d}+\nabla({\bf d}_2-\hat{\bf d}):\nabla\delta{\bf d}){\Big)}{\Big\|}_{L_T^1(\dot{B}_{2,1}^{\frac{N}{2}-2})}\\[2mm]
&\quad\lesssim {\Big(}1+\|b_1\|_{L_T^\infty(\dot{B}_{2,1}^{\frac{N}{2}})}{\Big)}
\|\delta{\bf d}\|_{L_T^2(\dot{B}_{2,1}^{\frac{N}{2}})}
{\Big(}\|{\bf d}_1-\hat{\bf d}\|_{L_T^2(\dot{B}_{2,1}^{\frac{N}{2}+1})}+\|{\bf d}_2-\hat{\bf d}\|_{L_T^2(\dot{B}_{2,1}^{\frac{N}{2}+1})}{\Big)}.
\end{split}
\ee
Moreover, applying Corollary \ref{co2.1} and Lemma \ref{le2.3}, one easily gets
\be\label{4.12}
\begin{split}
&\|\nabla(\mathcal{K}(b_2)-\mathcal{K}(b_1))\|_{L_T^1(\dot{B}_{2,1}^{\frac{N}{2}-2})}
\lesssim\|\mathcal{K}(b_2)-\mathcal{K}(b_1)\|_{L_T^1(\dot{B}_{2,1}^{\frac{N}{2}-1})}\\[2mm]
&\quad\lesssim T{\Big(}\|b_1\|_{L_T^\infty(\dot{B}_{2,1}^{\frac{N}{2}})}+
\|b_2\|_{L_T^\infty(\dot{B}_{2,1}^{\frac{N}{2}})}{\Big)}\|\delta b\|_{L_T^\infty(\dot{B}_{2,1}^{\frac{N}{2}-1})}.
\end{split}
\ee
Note that in the above inequalities, we have used that $N>2$.  Collecting the above estimates \eqref{4.5}-\eqref{4.12}, it follows that
\be\label{4.13}
\begin{split}
&\|\delta G\|_{L_T^1(\dot{B}_{2,1}^{\frac{N}{2}-2})}\\[2mm]
&\quad\lesssim {\Big(}\|{\bf u}_1\|_{L_T^2(\dot{B}_{2,1}^{\frac{N}{2}})}+\|{\bf u}_2\|_{L_T^2(\dot{B}_{2,1}^{\frac{N}{2}})}{\Big)}\|\delta{\bf u}\|_{L_T^2(\dot{B}_{2,1}^{\frac{N}{2}-1})}
+\|b_1\|_{L_T^\infty(\dot{B}_{2,1}^{\frac{N}{2}})}\|\delta{\bf u}\|_{L_T^1(\dot{B}_{2,1}^{\frac{N}{2}})}\\[2mm]
&\quad\quad +{\Big(}1+\|b_1\|_{L_T^\infty(\dot{B}_{2,1}^{\frac{N}{2}})}
+\|b_2\|_{L_T^\infty(\dot{B}_{2,1}^{\frac{N}{2}})}{\Big)}\|{\bf u}_2\|_{L_T^1(\dot{B}_{2,1}^{\frac{N}{2}+1})}\|\delta b\|_{L_T^\infty(\dot{B}_{2,1}^{\frac{N}{2}-1})}\\[2mm]
&\quad\quad +{\Big(}1+\|b_1\|_{L_T^\infty(\dot{B}_{2,1}^{\frac{N}{2}})}
+\|b_2\|_{L_T^\infty(\dot{B}_{2,1}^{\frac{N}{2}})}{\Big)}\|{\bf d}_2-\hat{\bf d}\|_{L_T^2(\dot{B}_{2,1}^{\frac{N}{2}+1})}^2\|\delta b\|_{L_T^\infty(\dot{B}_{2,1}^{\frac{N}{2}-1})}\\[2mm]
&\quad\quad +{\Big(}1+\|b_1\|_{L_T^\infty(\dot{B}_{2,1}^{\frac{N}{2}})}
{\Big)}{\Big(}\|{\bf d}_1-\hat{\bf d}\|_{L_T^2(\dot{B}_{2,1}^{\frac{N}{2}+1})}+\|{\bf d}_2-\hat{\bf d}\|_{L_T^2(\dot{B}_{2,1}^{\frac{N}{2}+1})}{\Big)}\|\delta {\bf d}\|_{L_T^2(\dot{B}_{2,1}^{\frac{N}{2}})}\\[2mm]
&\quad\quad+T{\Big(}\|b_1\|_{L_T^\infty(\dot{B}_{2,1}^{\frac{N}{2}})}
+\|b_2\|_{L_T^\infty(\dot{B}_{2,1}^{\frac{N}{2}})}{\Big)}\|\delta b\|_{L_T^\infty(\dot{B}_{2,1}^{\frac{N}{2}-1})}.
\end{split}
\ee
For the estimate of $\delta\mathcal{M}$, note that
\be\label{4.14}
\begin{split}
&\|\delta{\bf u}\cdot\nabla{\bf d}_1\|_{L_T^1(\dot{B}_{2,1}^{\frac{N}{2}-1})}
=\|\delta{\bf u}\cdot\nabla({\bf d}_1-\hat{\bf d})\|_{L_T^1(\dot{B}_{2,1}^{\frac{N}{2}-1})}\\[2mm]
&\quad\lesssim\|\delta{\bf u}\|_{L_T^2(\dot{B}_{2,1}^{\frac{N}{2}-1})}\|\nabla({\bf d}_1-\hat{\bf d})\|_{L_T^2(\dot{B}_{2,1}^{\frac{N}{2}})}\lesssim\|\delta{\bf u}\|_{L_T^2(\dot{B}_{2,1}^{\frac{N}{2}-1})}\|{\bf d}_1-\hat{\bf d}\|_{L_T^2(\dot{B}_{2,1}^{\frac{N}{2}+1})},
\end{split}
\ee

\be\label{4.15}
\begin{split}
\|\theta|\nabla({\bf d}_1-\hat{\bf d})|^2\delta{\bf d}\|_{L_T^1(\dot{B}_{2,1}^{\frac{N}{2}-1})}&\lesssim \|\delta{\bf d}\|_{L_T^\infty(\dot{B}_{2,1}^{\frac{N}{2}-1})}\||\nabla({\bf d}_1-\hat{\bf d})|^2\|_{L_T^1(\dot{B}_{2,1}^{\frac{N}{2}})}\\[2mm]
&\lesssim\|\delta{\bf d}\|_{L_T^\infty(\dot{B}_{2,1}^{\frac{N}{2}-1})}\|{\bf d}_1-\hat{\bf d}\|_{L_T^2(\dot{B}_{2,1}^{\frac{N}{2}+1})}^2,
\end{split}
\ee

\be\label{4.16}
\begin{split}
&\|\theta(\nabla({\bf d}_1-\hat{\bf d}):\nabla\delta{\bf d}+\nabla({\bf d}_2-\hat{\bf d}):\nabla\delta{\bf d}){\bf d}_2\|_{L_T^1(\dot{B}_{2,1}^{\frac{N}{2}-1})}\\[2mm]
&\quad\lesssim \|(\nabla({\bf d}_1-\hat{\bf d}):\nabla\delta{\bf d}+\nabla({\bf d}_2-\hat{\bf d}):\nabla\delta{\bf d})({\bf d}_2-\hat{\bf d})\|_{L_T^1(\dot{B}_{2,1}^{\frac{N}{2}-1})}\\[2mm]
&\quad\quad+\|(\nabla({\bf d}_1-\hat{\bf d}):\nabla\delta{\bf d}+\nabla({\bf d}_2-\hat{\bf d}):\nabla\delta{\bf d})\hat{\bf d}\|_{L_T^1(\dot{B}_{2,1}^{\frac{N}{2}-1})}\\[2mm]
&\quad\lesssim \|{\bf d}_2-\hat{\bf d}\|_{L_T^\infty(\dot{B}_{2,1}^{\frac{N}{2}})}{\Big(}\|{\bf d}_1-\hat{\bf d}\|_{L_T^2(\dot{B}_{2,1}^{\frac{N}{2}+1})}+\|{\bf d}_2-\hat{\bf d}\|_{L_T^2(\dot{B}_{2,1}^{\frac{N}{2}+1})}{\Big)}
\|\delta{\bf d}\|_{L_T^2(\dot{B}_{2,1}^{\frac{N}{2}})}\\[2mm]
&\quad\quad+{\Big(}\|{\bf d}_1-\hat{\bf d}\|_{L_T^2(\dot{B}_{2,1}^{\frac{N}{2}+1})}+\|{\bf d}_2-\hat{\bf d}\|_{L_T^2(\dot{B}_{2,1}^{\frac{N}{2}+1})}{\Big)}
\|\delta{\bf d}\|_{L_T^2(\dot{B}_{2,1}^{\frac{N}{2}})}\\[2mm]
&\quad\lesssim {\Big(}1+\|{\bf d}_2-\hat{\bf d}\|_{L_T^\infty(\dot{B}_{2,1}^{\frac{N}{2}})}{\Big)}{\Big(}\|{\bf d}_1-\hat{\bf d}\|_{L_T^2(\dot{B}_{2,1}^{\frac{N}{2}+1})}+\|{\bf d}_2-\hat{\bf d}\|_{L_T^2(\dot{B}_{2,1}^{\frac{N}{2}+1})}{\Big)}
\|\delta{\bf d}\|_{L_T^2(\dot{B}_{2,1}^{\frac{N}{2}})}.
\end{split}
\ee
Therefore,
\be\label{4.17}
\begin{split}
&\|\delta\mathcal{M}\|_{L_T^1(\dot{B}_{2,1}^{\frac{N}{2}-1})}\\[2mm]
&\quad\lesssim\|\delta{\bf u}\|_{L_T^2(\dot{B}_{2,1}^{\frac{N}{2}-1})}\|{\bf d}_1-\hat{\bf d}\|_{L_T^2(\dot{B}_{2,1}^{\frac{N}{2}+1})}+\|\delta{\bf d}\|_{L_T^\infty(\dot{B}_{2,1}^{\frac{N}{2}-1})}\|{\bf d}_1-\hat{\bf d}\|_{L_T^2(\dot{B}_{2,1}^{\frac{N}{2}+1})}^2\\[2mm]
&\quad\quad+{\Big(}1+\|{\bf d}_2-\hat{\bf d}\|_{L_T^\infty(\dot{B}_{2,1}^{\frac{N}{2}})}{\Big)}{\Big(}\|{\bf d}_1-\hat{\bf d}\|_{L_T^2(\dot{B}_{2,1}^{\frac{N}{2}+1})}+\|{\bf d}_2-\hat{\bf d}\|_{L_T^2(\dot{B}_{2,1}^{\frac{N}{2}+1})}{\Big)}
\|\delta{\bf d}\|_{L_T^2(\dot{B}_{2,1}^{\frac{N}{2}})}.
\end{split}
\ee
Consequently, it follows that
\bess
\|(\delta b, \delta{\bf u}, \delta{\bf d})\|_{\mathfrak{B}_\nu^{\frac{N}{2}-1}(T)}\leq Z(T)\|(\delta b, \delta{\bf u}, \delta{\bf d})\|_{\mathfrak{B}_\nu^{\frac{N}{2}-1}(T)},
\eess
with
\bess
\begin{split}
Z(T)&=C\exp{\Big(}C\|{\bf u}_2\|_{L_T^1(\dot{B}_{2,1}^{\frac{N}{2}+1})}{\Big)}
{\Big\{}\|b_1\|_{L_T^\infty(\tilde{B}_\nu^{\frac{N}{2}, \infty})}+\|{\bf u}_2\|_{L_T^1(\dot{B}_{2,1}^{\frac{N}{2}+1})}+\|{\bf u}_1\|_{L_T^2(\dot{B}_{2,1}^{\frac{N}{2}})}\\[2mm]
&\quad+\|{\bf u}_2\|_{L_T^2(\dot{B}_{2,1}^{\frac{N}{2}})}+{\Big(}1+\|b_1\|_{L_T^\infty(\dot{B}_{2,1}^{\frac{N}{2}})}
+\|b_2\|_{L_T^\infty(\dot{B}_{2,1}^{\frac{N}{2}})}{\Big)}\\[2mm]
&\quad \times{\Big(}\|{\bf u}_2\|_{L_T^1(\dot{B}_{2,1}^{\frac{N}{2}+1})}
+\|{\bf d}_1-\hat{\bf d}\|_{L_T^2(\dot{B}_{2,1}^{\frac{N}{2}+1})}^2+\|{\bf d}_2-\hat{\bf d}\|_{L_T^2(\dot{B}_{2,1}^{\frac{N}{2}+1})}^2{\Big)}\\[2mm]
&\quad +T{\Big(}\|b_1\|_{L_T^\infty(\dot{B}_{2,1}^{\frac{N}{2}})}
+\|b_2\|_{L_T^\infty(\dot{B}_{2,1}^{\frac{N}{2}})}{\Big)}+
{\Big(}1+\|{\bf d}_2-\hat{\bf d}\|_{L_T^\infty(\dot{B}_{2,1}^{\frac{N}{2}})}+\|b_1\|_{L_T^\infty(\dot{B}_{2,1}^{\frac{N}{2}})}{\Big)}\\[2mm]
&\quad\times{\Big(}\|{\bf d}_1-\hat{\bf d}\|_{L_T^2(\dot{B}_{2,1}^{\frac{N}{2}+1})}+\|{\bf d}_2-\hat{\bf d}\|_{L_T^2(\dot{B}_{2,1}^{\frac{N}{2}+1})}{\Big)}{\Big\}}.
\end{split}
\eess
It is now clear that
\bess
\limsup_{T\rightarrow 0^+}Z(T)\leq C\|b_1\|_{L_T^\infty(\tilde{B}_\nu^{\frac{N}{2}, \infty})}.
\eess
Thus, if $\eta>0$ is sufficiently small, we have
\bess
\|(\delta b, \delta{\bf u}, \delta{\bf d})\|_{\mathfrak{B}_\nu^{\frac{N}{2}-1}(T)}=0
\eess
for certain $T>0$ small enough. So $(b_1, {\bf u}_1, {\bf d}_1)\equiv (b_2, {\bf u}_2, {\bf d}_2)$ on $[0, T]$.

Then we could argue as in \cite{danchin2000global} for the isentropic compressible Navier-Stokes equations. Let $T_m$ be the largest time such that the two solutions coincide on $[0, T_m]$. Taking $T_m$ as the initial time, we define
\bess
(\tilde{b}_i(t), \tilde{{\bf u}}_i(t), \tilde{{\bf d}}_i(t)):=(b_i(t+T_m), {\bf u}_i(t+T_m), {\bf d}_i(t+T_m)).
\eess
Repeating the above arguments and using the fact that $\|\tilde{b}_i(0)\|_{L^\infty}\leq\frac{1}{4}$ (see \eqref{4.18}), we can prove that
\bess
(\tilde{b}_1(t), \tilde{{\bf u}}_1(t), \tilde{{\bf d}}_1(t))=(\tilde{b}_2(t), \tilde{{\bf u}}_2(t), \tilde{{\bf d}}_2(t))
\eess
on a suitably small interval $[0,\epsilon]\,(\epsilon>0)$. This contradicts the assumption that $T_m$ is the largest time such that the two solutions coincide. As a result, $T_m=\infty$, which means that the uniqueness result holds in $\mathbb{R}^+$.

{\bf Case $N=2$}: Using again  Propositions \ref{pr3.1} and \ref{pr3.2}, it follows that
\be\label{4.3}
\begin{split}
&\|(\delta b, \delta{\bf u}, \delta{\bf d})\|_{\mathfrak{B}_\nu^{s-1}(T)}\\[2mm]
&\quad\leq C\exp{\Big(}C\|{\bf u}_2\|_{L_T^1(\dot{B}_{2,1}^{2})}{\Big)}{\Big(}\|\delta F\|_{L_T^1(\tilde{B}_\nu^{s-1,\infty})}+\|\delta G\|_{L_T^1(\dot{B}_{2,1}^{s-2})}+\|\delta \mathcal{M}\|_{L_T^1(\dot{B}_{2,1}^{s-1})}{\Big)}.
\end{split}
\ee
Let us estimate $\delta F, \delta G$ and $\delta\mathcal{M}$, respectively.
\bess
\begin{split}
\|\delta F\|_{L_T^1(\tilde{B}_\nu^{s-1,\infty})}&\lesssim T^{\frac{s-1}{2}}\|\delta{\bf u}\|_{L_T^{\frac{2}{3-s}}(\dot{B}_{2,1}^{1})}
\|b_1\|_{L_T^\infty(\tilde{B}_\nu^{s,\infty})}+\|\delta{\bf u}\|_{L_T^1(\dot{B}_{2,1}^{s})}
\|b_1\|_{L_T^\infty(\tilde{B}_\nu^{1,\infty})}\\[2mm]
&\quad+\|\delta b\|_{L_T^\infty(\tilde{B}_\nu^{s-1,\infty})}\|{\bf u}_2\|_{L_T^1(\dot{B}_{2,1}^{2})}\\[2mm]
&\lesssim T^{\frac{s-1}{2}}\|\delta{\bf u}\|_{L_T^{1}(\dot{B}_{2,1}^{s})}^{\frac{3-s}{2}}\|\delta{\bf u}\|_{L_T^{\infty}(\dot{B}_{2,1}^{s-2})}^{\frac{s-1}{2}}
\|b_1\|_{L_T^\infty(\tilde{B}_\nu^{s,\infty})}+\|\delta{\bf u}\|_{L_T^1(\dot{B}_{2,1}^{s})}
\|b_1\|_{L_T^\infty(\tilde{B}_\nu^{1,\infty})}\\[2mm]
&\quad+\|\delta b\|_{L_T^\infty(\tilde{B}_\nu^{s-1,\infty})}\|{\bf u}_2\|_{L_T^1(\dot{B}_{2,1}^{2})}\\[2mm]
&\lesssim T^{\frac{s-1}{2}}{\Big(}\|\delta{\bf u}\|_{L_T^{1}(\dot{B}_{2,1}^{s})}+\|\delta{\bf u}\|_{L_T^{\infty}(\dot{B}_{2,1}^{s-2})}{\Big)}
\|b_1\|_{L_T^\infty(\tilde{B}_\nu^{s,\infty})}\\[2mm]
&\quad+\|\delta{\bf u}\|_{L_T^1(\dot{B}_{2,1}^{s})}
\|b_1\|_{L_T^\infty(\tilde{B}_\nu^{1,\infty})}
+\|\delta b\|_{L_T^\infty(\tilde{B}_\nu^{s-1,\infty})}\|{\bf u}_2\|_{L_T^1(\dot{B}_{2,1}^{2})},
\end{split}
\eess
where we have used the following inequality:
$$\|\delta{\bf u}\|_{L_T^{\frac{2}{3-s}}(\dot{B}_{2,1}^{1})}\leq\|\delta{\bf u}\|_{L_T^{1}(\dot{B}_{2,1}^{s})}^{\frac{3-s}{2}}\|\delta{\bf u}\|_{L_T^{\infty}(\dot{B}_{2,1}^{s-2})}^{\frac{s-1}{2}}\leq
\|\delta{\bf u}\|_{L_T^{1}(\dot{B}_{2,1}^{s})}+\|\delta{\bf u}\|_{L_T^{\infty}(\dot{B}_{2,1}^{s-2})}. $$
Moreover,
\bess
\begin{split}
&\|\delta G\|_{L_T^1(\dot{B}_{2,1}^{s-2})}\\[2mm]
&\quad\lesssim {\Big(}\|{\bf u}_1\|_{L_T^2(\dot{B}_{2,1}^{1})}+\|{\bf u}_2\|_{L_T^2(\dot{B}_{2,1}^{1})}{\Big)}\|\delta{\bf u}\|_{L_T^2(\dot{B}_{2,1}^{s-1})}
+\|b_1\|_{L_T^\infty(\dot{B}_{2,1}^{1})}\|\delta{\bf u}\|_{L_T^1(\dot{B}_{2,1}^{s})}\\[2mm]
&\quad\quad +{\Big(}1+\|b_1\|_{L_T^\infty(\dot{B}_{2,1}^{1})}
+\|b_2\|_{L_T^\infty(\dot{B}_{2,1}^{1})}{\Big)}\|{\bf u}_2\|_{L_T^1(\dot{B}_{2,1}^{2})}\|\delta b\|_{L_T^\infty(\dot{B}_{2,1}^{s-1})}\\[2mm]
&\quad\quad +{\Big(}1+\|b_1\|_{L_T^\infty(\dot{B}_{2,1}^{1})}
+\|b_2\|_{L_T^\infty(\dot{B}_{2,1}^{1})}{\Big)}\|{\bf d}_2-\hat{\bf d}\|_{L_T^2(\dot{B}_{2,1}^{2})}^2\|\delta b\|_{L_T^\infty(\dot{B}_{2,1}^{s-1})}\\[2mm]
&\quad\quad +{\Big(}1+\|b_1\|_{L_T^\infty(\dot{B}_{2,1}^{1})}
{\Big)}{\Big(}\|{\bf d}_1-\hat{\bf d}\|_{L_T^2(\dot{B}_{2,1}^{2})}+\|{\bf d}_2-\hat{\bf d}\|_{L_T^2(\dot{B}_{2,1}^{2})}{\Big)}\|\delta {\bf d}\|_{L_T^2(\dot{B}_{2,1}^{s})}\\[2mm]
&\quad\quad+T{\Big(}\|b_1\|_{L_T^\infty(\dot{B}_{2,1}^{1})}
+\|b_2\|_{L_T^\infty(\dot{B}_{2,1}^{1})}{\Big)}\|\delta b\|_{L_T^\infty(\dot{B}_{2,1}^{s-1})},
\end{split}
\eess

\bess
\begin{split}
&\|\delta\mathcal{M}\|_{L_T^1(\dot{B}_{2,1}^{s-1})}\\[2mm]
&\quad\lesssim\|\delta{\bf u}\|_{L_T^2(\dot{B}_{2,1}^{s-1})}\|{\bf d}_1-\hat{\bf d}\|_{L_T^2(\dot{B}_{2,1}^{2})}+\|\delta{\bf d}\|_{L_T^\infty(\dot{B}_{2,1}^{s-1})}\|{\bf d}_1-\hat{\bf d}\|_{L_T^2(\dot{B}_{2,1}^{2})}^2\\[2mm]
&\quad\quad+{\Big(}1+\|{\bf d}_2-\hat{\bf d}\|_{L_T^\infty(\dot{B}_{2,1}^{1})}{\Big)}{\Big(}\|{\bf d}_1-\hat{\bf d}\|_{L_T^2(\dot{B}_{2,1}^{2})}+\|{\bf d}_2-\hat{\bf d}\|_{L_T^2(\dot{B}_{2,1}^{2})}{\Big)}
\|\delta{\bf d}\|_{L_T^2(\dot{B}_{2,1}^{s})}.
\end{split}
\eess
Consequently, it follows that
\bess
\|(\delta b, \delta{\bf u}, \delta{\bf d})\|_{\mathfrak{B}_\nu^{s-1}(T)}\leq Z(T)\|(\delta b, \delta{\bf u}, \delta{\bf d})\|_{\mathfrak{B}_\nu^{s-1}(T)}
\eess
with
\bess
\begin{split}
Z(T)&=C\exp{\Big(}C\|{\bf u}_2\|_{L_T^1(\dot{B}_{2,1}^{2})}{\Big)}
{\Big\{}\|b_1\|_{L_T^\infty(\tilde{B}_\nu^{1, \infty})}+\|{\bf u}_2\|_{L_T^1(\dot{B}_{2,1}^{2})}+\|{\bf u}_1\|_{L_T^2(\dot{B}_{2,1}^{1})}\\[2mm]
&\quad+\|{\bf u}_2\|_{L_T^2(\dot{B}_{2,1}^{1})}+{\Big(}1+\|b_1\|_{L_T^\infty(\dot{B}_{2,1}^{1})}
+\|b_2\|_{L_T^\infty(\dot{B}_{2,1}^{1})}{\Big)}\\[2mm]
&\quad \times{\Big(}\|{\bf u}_2\|_{L_T^1(\dot{B}_{2,1}^{2})}
+\|{\bf d}_1-\hat{\bf d}\|_{L_T^2(\dot{B}_{2,1}^{2})}^2+\|{\bf d}_2-\hat{\bf d}\|_{L_T^2(\dot{B}_{2,1}^{2})}^2{\Big)}\\[2mm]
&\quad +T{\Big(}\|b_1\|_{L_T^\infty(\dot{B}_{2,1}^{1})}
+\|b_2\|_{L_T^\infty(\dot{B}_{2,1}^{1})}{\Big)}+
{\Big(}1+\|{\bf d}_2-\hat{\bf d}\|_{L_T^\infty(\dot{B}_{2,1}^{1})}+\|b_1\|_{L_T^\infty(\dot{B}_{2,1}^{1})}{\Big)}\\[2mm]
&\quad\times{\Big(}\|{\bf d}_1-\hat{\bf d}\|_{L_T^2(\dot{B}_{2,1}^{2})}+\|{\bf d}_2-\hat{\bf d}\|_{L_T^2(\dot{B}_{2,1}^{2})}{\Big)}+T^{1-\frac{1}{r}}
\|b_1\|_{L_T^\infty(\tilde{B}_\nu^{s,\infty})}{\Big\}}.
\end{split}
\eess
It is clear that
\bess
\limsup_{T\rightarrow 0^+}Z(T)\leq C\|b_1\|_{L_T^\infty(\tilde{B}_\nu^{1, \infty})}.
\eess
Thus, if $\eta>0$ is sufficiently small, we have
\bess
\|(\delta b, \delta{\bf u}, \delta{\bf d})\|_{\mathfrak{B}_\nu^{s-1}(T)}=0
\eess
for certain $T>0$ small enough. So $(b_1, {\bf u}_1, {\bf d}_1)\equiv (b_2, {\bf u}_2, {\bf d}_2)$ on $[0, T]$. We can now achieve the proof as in the case $N\geq 3$.\hspace{11cm}$\Box$

\section{Incompressible limit}
\setcounter{equation}{0}\setcounter{section}{5}\indent
This part is devoted to the proof of Theorem \ref{th1.2}. Firstly we list the global well-posedness of incompressible model \eqref{1.3}.
\begin{prop}\label{pr5.2}{\rm(}see {\rm \cite{Xu2014well-posedness}}{\rm )}
Let $N\geq 2$. Suppose that the initial data $({\bf u}_0, {\bf d}_0)$ belong to $\dot{B}_{2,1}^{\frac{N}{2}-1}(\mathbb{R}^N)\times\dot{B}_{2,1}^{\frac{N}{2}}
(\mathbb{S}^{N-1})$ with ${\rm div}{\bf u}_0=0$. Then there exist two positive constants $\bar{c}$ and $\bar{C}$, depending on $N, \mu, \xi$ and $\theta$, such that if
$$
\|{\bf u}_0\|_{\dot{B}_{2,1}^{\frac{N}{2}-1}}+\|{\bf d}_0-\hat{{\bf d}}\|_{\dot{B}_{2,1}^{\frac{N}{2}}}\leq \bar{c},
$$
then there exists a unique global solution $({\bf u},{\bf d})$ to \eqref{1.3} in the class
$$
L^\infty([0,\infty); \dot{B}_{2,1}^{\frac{N}{2}-1})\cap L^1([0,\infty); \dot{B}_{2,1}^{\frac{N}{2}+1})\times L^\infty([0,\infty); \dot{B}_{2,1}^{\frac{N}{2}})\cap L^1([0,\infty); \dot{B}_{2,1}^{\frac{N}{2}+2}).
$$
Moreover, the following estimate holds true
\be\label{5.1}
\begin{split}
&\|{\bf u}\|_{\tilde{L}_T^\infty(\dot{B}_{2,1}^{\frac{N}{2}-1})}
+\|{\bf u}\|_{{L}_T^1(\dot{B}_{2,1}^{\frac{N}{2}+1})}+\|{\bf d}-\hat{{\bf d}}\|_{\tilde{L}_T^\infty(\dot{B}_{2,1}^{\frac{N}{2}})}
+\|{\bf d}-\hat{{\bf d}}\|_{{L}_T^1(\dot{B}_{2,1}^{\frac{N}{2}+2})}\\[2mm]
&\quad\leq \bar{C}{\Big(}\|{\bf u}_0\|_{\dot{B}_{2,1}^{\frac{N}{2}-1}}+\|{\bf d}_0-\hat{{\bf d}}\|_{\dot{B}_{2,1}^{\frac{N}{2}}}{\Big)}.
\end{split}
\ee
\end{prop}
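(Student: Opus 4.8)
The plan is to prove Proposition \ref{pr5.2} by the classical Fujita--Kato/Danchin scheme adapted to the coupled system, exploiting the parabolic smoothing of both the velocity and the director equations. First I would eliminate the pressure by applying the Leray--Helmholtz projector $\mathcal{P}=I-\nabla\Delta^{-1}{\rm div}$ to the momentum equation; since $\mathcal{P}$ is a homogeneous Fourier multiplier of order $0$ it acts boundedly on every $\dot{B}_{2,1}^s$ and the constraint ${\rm div}{\bf u}=0$ is automatically propagated. Writing ${\bf e}={\bf d}-\hat{\bf d}$ (so that $\nabla{\bf d}=\nabla{\bf e}$ and the constant $\hat{\bf d}$ enters only as a multiplier, never as an element of a homogeneous space), the system becomes
\[
\left\{\begin{array}{l}
\partial_t{\bf u}-\mu\Delta{\bf u}=-\mathcal{P}({\bf u}\cdot\nabla{\bf u})-\xi\,\mathcal{P}\,{\rm div}(\nabla{\bf e}\odot\nabla{\bf e}),\\[1mm]
\partial_t{\bf e}-\theta\Delta{\bf e}=-{\bf u}\cdot\nabla{\bf e}+\theta|\nabla{\bf e}|^2({\bf e}+\hat{\bf d}),\\[1mm]
{\rm div}\,{\bf u}=0,\qquad ({\bf u},{\bf e})|_{t=0}=({\bf u}_0,{\bf d}_0-\hat{\bf d}).
\end{array}\right.
\]

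Next I would set the resolution space $E_T:=\big(\tilde{L}_T^\infty(\dot{B}_{2,1}^{\frac{N}{2}-1})\cap L_T^1(\dot{B}_{2,1}^{\frac{N}{2}+1})\big)\times\big(\tilde{L}_T^\infty(\dot{B}_{2,1}^{\frac{N}{2}})\cap L_T^1(\dot{B}_{2,1}^{\frac{N}{2}+2})\big)$ and run a fixed-point argument (or a Friedrichs approximation followed by compactness) using the maximal $L^1$-regularity estimate for the heat semigroup in Besov spaces: if $\partial_t w-\kappa\Delta w=f$, $w|_{t=0}=w_0$, then $\|w\|_{\tilde{L}_T^\infty(\dot{B}_{2,1}^s)}+\kappa\|w\|_{L_T^1(\dot{B}_{2,1}^{s+2})}\lesssim\|w_0\|_{\dot{B}_{2,1}^s}+\|f\|_{L_T^1(\dot{B}_{2,1}^s)}$. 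The nonlinear sources are controlled by the product law of Corollary \ref{co2.1}, the interpolation property (item 3 of Proposition \ref{pr2.1}) and the composition estimate of Lemma \ref{le2.2}. Setting $X:=\|{\bf u}\|_{\tilde{L}_T^\infty(\dot{B}_{2,1}^{N/2-1})}+\mu\|{\bf u}\|_{L_T^1(\dot{B}_{2,1}^{N/2+1})}$ and $Y:=\|{\bf e}\|_{\tilde{L}_T^\infty(\dot{B}_{2,1}^{N/2})}+\theta\|{\bf e}\|_{L_T^1(\dot{B}_{2,1}^{N/2+2})}$, one gets $\|{\bf u}\cdot\nabla{\bf u}\|_{L_T^1(\dot{B}_{2,1}^{N/2-1})}\lesssim X^2$; after interpolating both ${\bf u}$ and $\nabla{\bf e}$ into $L_T^2(\dot{B}_{2,1}^{N/2})$, $\|{\rm div}(\nabla{\bf e}\odot\nabla{\bf e})\|_{L_T^1(\dot{B}_{2,1}^{N/2-1})}\lesssim\|\nabla{\bf e}\|_{L_T^2(\dot{B}_{2,1}^{N/2})}^2\lesssim Y^2$ and $\|{\bf u}\cdot\nabla{\bf e}\|_{L_T^1(\dot{B}_{2,1}^{N/2})}\lesssim\|{\bf u}\|_{L_T^2(\dot{B}_{2,1}^{N/2})}\|\nabla{\bf e}\|_{L_T^2(\dot{B}_{2,1}^{N/2})}\lesssim XY$; and, using that $\dot{B}_{2,1}^{N/2}$ is an algebra, $\||\nabla{\bf e}|^2({\bf e}+\hat{\bf d})\|_{L_T^1(\dot{B}_{2,1}^{N/2})}\lesssim Y^2+Y^3$ (the purely constant factor $\hat{\bf d}$ costing nothing). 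Feeding these back into the linear estimates yields a closed inequality $X+Y\le C_0\big(\|{\bf u}_0\|_{\dot{B}_{2,1}^{N/2-1}}+\|{\bf d}_0-\hat{\bf d}\|_{\dot{B}_{2,1}^{N/2}}\big)+C_0\big((X+Y)^2+(X+Y)^3\big)$, which under the smallness hypothesis with $\bar{c}$ small enough gives the global a priori bound \eqref{5.1} by a standard continuation argument in $T$; uniqueness follows by estimating the difference of two solutions one notch below, in $\tilde{L}_T^\infty(\dot{B}_{2,1}^{N/2-2})\cap L_T^1(\dot{B}_{2,1}^{N/2})$ (with the usual extra intersection $\dot{B}_{2,1}^{N/2-1}\cap\dot{B}_{2,1}^{s}$ when $N=2$), together with Gronwall's lemma, which also delivers time-continuity of the solution.

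Finally, I would check that the geometric constraint $|{\bf d}|\equiv1$ is propagated, so that the solution indeed lies in $\dot{B}_{2,1}^{\frac{N}{2}}(\mathbb{S}^{N-1})$: dotting the ${\bf d}$-equation with ${\bf d}$ and using $2\Delta{\bf d}\cdot{\bf d}=\Delta|{\bf d}|^2-2|\nabla{\bf d}|^2$ shows that $\phi:=|{\bf d}|^2-1$ solves $\partial_t\phi+{\bf u}\cdot\nabla\phi-\theta\Delta\phi=2\theta|\nabla{\bf d}|^2\phi$ with $\phi|_{t=0}=0$, whence $\phi\equiv0$ by a uniqueness argument in the appropriate transport--diffusion class. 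I expect the main obstacle to be the bookkeeping in the coupled nonlinear estimates at the critical level --- in particular, producing $\nabla{\bf e}\in L_T^2(\dot{B}_{2,1}^{N/2})$ by interpolating between $\tilde{L}_T^\infty(\dot{B}_{2,1}^{N/2-1})$ and $L_T^1(\dot{B}_{2,1}^{N/2+1})$ so that the quadratic director stress ${\rm div}(\nabla{\bf e}\odot\nabla{\bf e})$ lands exactly in $L_T^1(\dot{B}_{2,1}^{N/2-1})$, the space forced on us by the velocity estimate, and controlling the cubic term $|\nabla{\bf e}|^2{\bf e}$ without any loss of critical regularity; once these are arranged, the remaining arguments are routine heat smoothing plus paraproduct calculus.
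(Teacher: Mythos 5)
Your sketch is correct, but note that the paper does not prove Proposition \ref{pr5.2} at all: it is quoted verbatim from the cited reference \cite{Xu2014well-posedness}, so there is no in-paper argument to compare against. The scheme you propose (Leray projection, maximal $L^1$ heat regularity in $\dot{B}_{2,1}^s$, the product law of Corollary \ref{co2.1} plus interpolation to place $\nabla{\bf e}\odot\nabla{\bf e}$ in $L^1_T(\dot{B}_{2,1}^{N/2-1})$ and $|\nabla{\bf e}|^2({\bf e}+\hat{\bf d})$ in $L^1_T(\dot{B}_{2,1}^{N/2})$, then a bootstrap and a one-notch-lower uniqueness estimate) is the standard one and mirrors exactly the paper's own treatment of the compressible analogue in Sections 3--4, so it would serve as a self-contained proof of the cited result.
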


The study of incompressible limit problem of \eqref{1.2} relies on Strichartz estimates for the following system of acoustics:
\begin{prop}\label{pr5.1}
{\rm(}see {\rm \cite{bahouri2011fourier}}{\rm )} Let $(b, v)$ be a solution of the following system of acoustics:
\begin{equation}\label{5.2}
 \left\{\begin{array}{ll}\medskip\displaystyle\partial_t b+\epsilon^{-1}\Lambda v=F,\\[2mm]
 \medskip\displaystyle\partial_t v-\epsilon^{-1}\Lambda b=G,\\
 \displaystyle (b, v)_{t=0}=(b_0, v_0),
 \end{array}
 \right.
 \end{equation}
where $\Lambda=\sqrt{-\Delta}$. Then, for any $s\in\mathbb{R}$ and $T\in (0, \infty]$, the following estimate holds:
\be\label{5.3}
\|(b,v)\|_{\tilde{L}_T^r(\dot{B}_{p,1}^{s+N(\frac{1}{p}-\frac{1}{2})+\frac{1}{r}})}
\leq C\epsilon^{\frac{1}{r}}\|(b_0, v_0)\|_{\dot{B}_{2,1}^s}+\epsilon^{1+\frac{1}{r}-\frac{1}{\bar{r}^\prime}}
\|(F,G)\|_{\tilde{L}_T^{\bar{r}^\prime}
(\dot{B}_{\bar{p}^\prime,1}^{s+N(\frac{1}{\bar{p}^\prime}-\frac{1}{2})+\frac{1}{\bar{r}^\prime}-1})},
\ee
with
\be\nonumber
\left.
\begin{array}{ll}\medskip\D
p\geq 2,~~~\frac{2}{r}\leq \min(1, \gamma(p)), ~~~~(r, p, N)\neq (2,\infty,3),\\[2mm]
\medskip\D \bar{p}\geq 2,~~~\frac{2}{\bar{r}}\leq \min(1, \gamma(\bar{p})), ~~~~(\bar{r}, \bar{p}, N)\neq (2,\infty,3),
\end{array}
\right.
\ee
where $\gamma(q):=(N-1)(\frac{1}{2}-\frac{1}{q}),~ \frac{1}{\bar{p}}+\frac{1}{\bar{p}^\prime}=1,$ and $\frac{1}{\bar{r}}+\frac{1}{\bar{r}^\prime}=1$.
\end{prop}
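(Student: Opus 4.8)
The plan is to diagonalize the linear acoustic system \eqref{5.2} into a single half-wave equation, invoke the classical frequency-localized dispersive and Strichartz estimates for the wave group, and then recover the powers of $\epsilon$ by an elementary time rescaling. For the first step I would set $\psi:=b+iv$; since $b=\mathrm{Re}\,\psi$, $v=\mathrm{Im}\,\psi$, and the Littlewood--Paley blocks commute with complex conjugation, it is enough to estimate $\psi$ in the norms appearing in \eqref{5.3}. From \eqref{5.2} one checks at once that $\psi$ solves $\partial_t\psi-i\epsilon^{-1}\Lambda\psi=F+iG$, so Duhamel's formula gives
\begin{equation}\nonumber
\psi(t)=e^{it\epsilon^{-1}\Lambda}(b_0+iv_0)+\int_0^t e^{i(t-\tau)\epsilon^{-1}\Lambda}\bigl(F(\tau)+iG(\tau)\bigr)\,d\tau,
\end{equation}
and the whole question reduces to bounds for the free half-wave propagator $e^{it\Lambda}$ and its retarded integral.

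\textbf{Fixed-frequency estimates.} For each $j\in\mathbb{Z}$ I would start from the dispersive bound produced by stationary phase,
\begin{equation}\nonumber
\|\dot{\Delta}_j e^{\pm it\Lambda}f\|_{L^\infty}\lesssim 2^{jN}\min\bigl\{1,(2^j|t|)^{-\frac{N-1}{2}}\bigr\}\|\dot{\Delta}_j f\|_{L^1},
\end{equation}
interpolate it with the trivial $L^2$ conservation law, and insert the result into the Keel--Tao $TT^{*}$/duality argument. This yields, for every pair $(r,p)$ admissible in the sense of the hypotheses (that is $p\ge2$, $\frac{2}{r}\le\min(1,\gamma(p))$, $(r,p,N)\neq(2,\infty,3)$), the localized homogeneous estimate $\|\dot{\Delta}_j e^{it\Lambda}f\|_{L^r_tL^p_x}\lesssim 2^{j\rho(r,p)}\|\dot{\Delta}_j f\|_{L^2}$ with $\rho(r,p):=N(\tfrac12-\tfrac1p)-\tfrac1r$, together with the matching retarded (inhomogeneous) estimate in which the source is measured in $L^{\bar r'}_tL^{\bar p'}_x$ with the second admissible pair $(\bar r,\bar p)$. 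The bookkeeping works out because $s+N(\tfrac1p-\tfrac12)+\tfrac1r=s-\rho(r,p)$, which is precisely the Besov index on the left of \eqref{5.3}, while $s+N(\tfrac1{\bar p'}-\tfrac12)+\tfrac1{\bar r'}-1$ is exactly the index that the retarded estimate consumes on the source.

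\textbf{Summation and rescaling.} Multiplying the frequency-localized inequalities by $2^{js}$ and summing over $j\in\mathbb{Z}$, the $\ell^{1}$ structure of the Besov norms and the definition of the Chemin--Lerner spaces $\tilde{L}^r_T(\dot{B}_{p,1}^{\cdot})$ give \eqref{5.3} in the case $\epsilon=1$. To put $\epsilon$ back in, I would rescale time by $t=\epsilon\sigma$: since $e^{it\epsilon^{-1}\Lambda}=\bigl(e^{i\sigma\Lambda}f\bigr)\big|_{\sigma=t/\epsilon}$, a change of variables turns the $\tilde{L}^r_T$ norm of the data term into $\epsilon^{1/r}$ times the $\tilde{L}^r_{T/\epsilon}$ norm, which is controlled by the $\tilde{L}^r_{\mathbb{R}_{+}}$ norm. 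For the Duhamel term the same substitution contributes $\epsilon^{1/r}$ from the outer time norm, $\epsilon$ from the Jacobian $d\tau$, and $\epsilon^{-1/\bar r'}$ from re-expressing the source in the rescaled time, for a total factor $\epsilon^{1+1/r-1/\bar r'}$. Assembling the two contributions is exactly \eqref{5.3}.

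\textbf{Main obstacle.} The one genuinely delicate point is the endpoint in the fixed-frequency step: for $N\ge4$ the pair $(r,p)=(2,\tfrac{2(N-1)}{N-3})$ is admissible, and treating it needs the full Keel--Tao bilinear-interpolation argument rather than the elementary Hardy--Littlewood--Sobolev estimate in the time variable, while in dimension $N=3$ the pair $(2,\infty)$ genuinely fails, which is precisely why it is excluded from the statement. Away from this endpoint everything reduces to the classical fixed-time dispersive bound together with Hardy--Littlewood--Sobolev in time, and the rescaling step is purely mechanical.
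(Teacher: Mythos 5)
Your proof is correct; the paper gives no argument for Proposition \ref{pr5.1} beyond citing \cite{bahouri2011fourier}, and your route --- diagonalizing via $\psi=b+iv$ into a half-wave equation, frequency-localized dispersive/Strichartz estimates summed with the weights $2^{js}$ into the Chemin--Lerner--Besov norms, and the time rescaling $t=\epsilon\sigma$ producing the factors $\epsilon^{1/r}$ and $\epsilon^{1+\frac1r-\frac{1}{\bar r'}}$ --- is exactly the standard proof found there. Your index bookkeeping (the left-hand regularity $s-\rho(r,p)$ and the source regularity $s+\rho(\bar r,\bar p)$) and your identification of the $N\ge 4$ endpoint pair $(2,\tfrac{2(N-1)}{N-3})$ as the only place the full Keel--Tao argument is needed are both accurate.
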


Let us make the following change of functions:
$$
c(t,x):=\epsilon b^\epsilon(\epsilon^2t,\epsilon x),~~~{\bf v}(t,x):=\epsilon {\bf u}^\epsilon(\epsilon^2t,\epsilon x),~~~{\bf h}(t,x):={\bf d}^\epsilon(\epsilon^2t,\epsilon x).
$$
Then $(b^\epsilon, {\bf u}^\epsilon, {\bf d}^\epsilon)$ solves \eqref{1.4} if and only if $(c, {\bf v}, {\bf h})$ solves
\begin{equation}\label{5.4}
 \left\{\begin{array}{ll}\medskip\displaystyle\partial_t c+{\rm div}{\bf v}=-{\rm div}(c{\bf v}),\\[2mm]
 \medskip\displaystyle\partial_t{\bf v}+{\bf v}\cdot\nabla{\bf v}-\frac{\mu\Delta{\bf v}+(\mu+\lambda)\nabla{\rm div}{\bf v}}{1+c}+\frac{P^\prime(1+c)}{1+c}\nabla c\\[2mm]
 \medskip\quad\D=\frac{-\xi}{1+c}{\rm div}(\nabla{\bf h}\odot\nabla{\bf h}-\frac{1}{2}|\nabla{\bf h}|^2{\bf I}),\\[2mm]
 \medskip\displaystyle\partial_t{\bf h}+{\bf v}\cdot\nabla{\bf h}=\theta(\Delta{\bf h}+|\nabla{\bf h}|^2{\bf h}),\\[2mm]
 \D (c, {\bf v}, {\bf h})|_{t=0}=(c_0, {\bf v}_0, {\bf h}_0),
 \end{array}
 \right.
 \end{equation}
with $(c_0, {\bf v}_0, {\bf h}_0):=(\epsilon b_0^\epsilon({\epsilon x}), \epsilon {\bf u}_0^\epsilon(\epsilon x), {\bf d}_0^\epsilon(\epsilon x))$.

According to Theorem \ref{th1.1}, there exist two positive constants $\eta=\eta (N, \lambda, \mu, \xi, \theta, P)$ and $M=M(N, \lambda, \mu, \xi, \theta, P)$ such that \eqref{5.4} has a solution $(c, {\bf v}, {\bf h})$ in $\mathfrak{B}_\nu^{\frac{N}{2}}$ as soon as
$$
\|c_0\|_{\tilde{B}_\nu^{\frac{N}{2},\infty}}+\|{\bf v}_0\|_{\dot{B}_{2,1}^{\frac{N}{2}-1}}+\|{\bf h}_0-\hat{{\bf d}}\|_{\dot{B}_{2,1}^{\frac{N}{2}}}\leq \eta.
$$
In addition, we have the estimate
$$
\|(c, {\bf v}, {\bf h}-\hat{{\bf d}})\|_{\mathfrak{B}_\nu^{\frac{N}{2}}}\leq M {\Big(}\|c_0\|_{\tilde{B}_\nu^{\frac{N}{2},\infty}}+\|{\bf v}_0\|_{\dot{B}_{2,1}^{\frac{N}{2}-1}}+\|{\bf h}_0-\hat{{\bf d}}\|_{\dot{B}_{2,1}^{\frac{N}{2}}}{\Big)}.
$$
According to the scaling properties of Besov space in Proposition \ref{pr2.1}, it is easy to verify that
$$
\|c_0\|_{\tilde{B}_\nu^{\frac{N}{2},\infty}}+\|{\bf v}_0\|_{\dot{B}_{2,1}^{\frac{N}{2}-1}}+\|{\bf h}_0-\hat{{\bf d}}\|_{\dot{B}_{2,1}^{\frac{N}{2}}}\approx\|b_0^\epsilon\|_{\tilde{B}_\nu^{\frac{N}{2},\infty}}
+\|{\bf u}_0^\epsilon\|_{\dot{B}_{2,1}^{\frac{N}{2}-1}}+\|{\bf d}_0^\epsilon-\hat{{\bf d}}\|_{\dot{B}_{2,1}^{\frac{N}{2}}},
$$
and
$$
\|(c, {\bf v}, {\bf h})\|_{\mathfrak{B}_\nu^{\frac{N}{2}}}\approx\|(b, {\bf u}, {\bf d})\|_{\mathfrak{B}_{\epsilon\nu}^{\frac{N}{2}}},
$$
which combined with Proposition \ref{pr5.2} conclude the  part (i) of Theorem \ref{th1.2}.

Now we turn to the proof of part (ii) of Theorem \ref{th1.2}.

{\bf Case 1}: $N\geq 4$. Let us first focus on the convergence of $(b^\epsilon, {\mathcal{Q}{\bf u}^\epsilon})$. Applying $\mathcal{Q}:=\nabla\Delta^{-1}{\rm div}$ to the second equation of \eqref{1.4}, we conclude that $(b^\epsilon, {\mathcal{Q}{\bf u}^\epsilon})$ satisfies
\begin{equation}\label{5.5}
 \left\{\begin{array}{ll}\medskip\displaystyle\partial_t b^\epsilon+\frac{{\rm div}\mathcal{Q}{\bf u}^\epsilon}{\epsilon}=-{\rm div}(b^\epsilon{\bf u}^\epsilon),\\[2mm]
 \medskip\displaystyle\partial_t\mathcal{Q}{\bf u}^\epsilon-\nu\Delta\mathcal{Q}{\bf u}^\epsilon+\frac{\nabla b^\epsilon}{\epsilon}=\mathcal{Q}{\Big(}
 -{\bf u}^\epsilon\cdot\nabla{\bf u}^\epsilon-\frac{\epsilon b^\epsilon}{1+\epsilon b^\epsilon}\mathcal{A}{\bf u}^\epsilon+K(\epsilon b^\epsilon)\frac{\nabla b^\epsilon}{\epsilon}\\[2mm]
 \quad\D-\frac{\xi}{1+\epsilon b^\epsilon}{\rm div}(\nabla{\bf d}^\epsilon\odot\nabla{\bf d}^\epsilon-\frac{1}{2}|\nabla{\bf d}^\epsilon|^2{\bf I}){\Big)}.
 \end{array}
 \right.
 \end{equation}
Setting $l^\epsilon:=\Lambda^{-1}{\rm div}{\mathcal{Q}}{\bf u}^\epsilon$, then system \eqref{5.5} becomes
 \begin{equation}\label{5.6}
 \left\{\begin{array}{ll}\medskip\displaystyle\partial_t b^\epsilon+\epsilon^{-1}\Lambda l^\epsilon=F^\epsilon,\\[2mm]
 \displaystyle\partial_t l^\epsilon-\epsilon^{-1}\Lambda b^\epsilon=G^\epsilon,
 \end{array}
 \right.
 \end{equation}
with
\be\nonumber
\left.
\begin{array}{ll}
F^\epsilon:=-{\rm div}(b^\epsilon{\bf u}^\epsilon),\\[2mm]
G^\epsilon:=\D\nu\Delta l^\epsilon-\Lambda^{-1}{\rm div}{\Big(}
 {\bf u}^\epsilon\cdot\nabla{\bf u}^\epsilon+\frac{\epsilon b^\epsilon}{1+\epsilon b^\epsilon}\mathcal{A}{\bf u}^\epsilon-K(\epsilon b^\epsilon)\frac{\nabla b^\epsilon}{\epsilon}\\[3mm]
 \quad\quad\quad\D+\frac{\xi}{1+\epsilon b^\epsilon}{\rm div}(\nabla{\bf d}^\epsilon\odot\nabla{\bf d}^\epsilon-\frac{1}{2}|\nabla{\bf d}^\epsilon|^2{\bf I}){\Big)}.
\end{array}
\right.
\ee

Remark that $\mathcal{Q}{\bf u}^\epsilon=-\nabla \Lambda^{-1}l^\epsilon$ so that estimating $\mathcal{Q}{\bf u}^\epsilon$ or $l^\epsilon$ is equivalent (up to an irrelevant constant). Taking $\bar{p}=2,~\bar{r}=\infty,~s=\frac{N}{2}-1$ and $r=2$ in Proposition \ref{pr5.1} yields
$$
\|(b^\epsilon,\mathcal{Q}{\bf u}^\epsilon)\|_{\tilde{L}^2(\dot{B}_{p,1}^{\frac{N}{p}-\frac{1}{2}})}
\leq C\epsilon^{\frac{1}{2}}\|(b_0^\epsilon,\mathcal{Q}{\bf u}_0^\epsilon)\|_{\dot{B}_{2,1}^{\frac{N}{2}-1}}+C\epsilon^{\frac{1}{2}}
\|(F^\epsilon,G^\epsilon)\|_{L^1(\dot{B}_{2,1}^{\frac{N}{2}-1})}
$$
for all $p\geq p_N:=\frac{2(N-1)}{N-3}$. Note that
\be\nonumber
\|{\rm div}(b^\epsilon{\bf u}^\epsilon)\|_{L^1(\dot{B}_{2,1}^{\frac{N}{2}-1})}\lesssim \|b^\epsilon\|_{L^2(\dot{B}_{2,1}^{\frac{N}{2}})}\|{\bf u}^\epsilon\|_{L^2(\dot{B}_{2,1}^{\frac{N}{2}})}\lesssim C_0^{\epsilon\nu},
\ee
\medskip
\be\nonumber
\begin{split}
\|{\bf u}^\epsilon\cdot\nabla{\bf u}^\epsilon\|_{L^1(\dot{B}_{2,1}^{\frac{N}{2}-1})}&\lesssim \|{\bf u}^\epsilon\|_{L^2(\dot{B}_{2,1}^{\frac{N}{2}})}^2\lesssim \|{\bf u}^\epsilon\|_{L^\infty(\dot{B}_{2,1}^{\frac{N}{2}-1})}\|{\bf u}^\epsilon\|_{L^1(\dot{B}_{2,1}^{\frac{N}{2}+1})}
\lesssim C_0^{\epsilon\nu},
\end{split}
\ee
\medskip
\be\nonumber
\begin{split}
{\Big\|}\frac{\epsilon b^\epsilon}{1+\epsilon b^\epsilon}\mathcal{A}{\bf u}^\epsilon{\Big\|}_{L^1(\dot{B}_{2,1}^{\frac{N}{2}-1})}&\lesssim \epsilon\|b^\epsilon\|_{L^\infty(\dot{B}_{2,1}^{\frac{N}{2}})}
\|\mathcal{A}{\bf u}^\epsilon\|_{L^1(\dot{B}_{2,1}^{\frac{N}{2}-1})}\\[2mm]
&\lesssim \nu^{-1}\|b^\epsilon\|_{L^\infty(\tilde{B}_{\epsilon\nu}^{\frac{N}{2},\infty})}
\|{\bf u}^\epsilon\|_{L^1(\dot{B}_{2,1}^{\frac{N}{2}+1})}
\lesssim C_0^{\epsilon\nu},
\end{split}
\ee
\medskip
\be\nonumber
{\Big\|}K(\epsilon b^\epsilon)\frac{\nabla b^\epsilon}{\epsilon}{\Big\|}_{L^1(\dot{B}_{2,1}^{\frac{N}{2}-1})}\lesssim
\|b^\epsilon\|_{L^2(\dot{B}_{2,1}^{\frac{N}{2}})}^2\lesssim C_0^{\epsilon\nu}.
\ee
Moreover,
\be\nonumber
\begin{split}
&{\Big\|}\frac{\xi}{1+\epsilon b^\epsilon}{\rm div}(\nabla{\bf d}^\epsilon\odot\nabla{\bf d}^\epsilon-\frac{1}{2}|\nabla{\bf d}^\epsilon|^2{\bf I}){\Big\|}_{L^1(\dot{B}_{2,1}^{\frac{N}{2}-1})}\\[2mm]
&\quad={\Big\|}\frac{\xi}{1+\epsilon b^\epsilon}{\rm div}{\Big(}\nabla({\bf d}^\epsilon-\hat{{\bf d}})\odot\nabla({\bf d}^\epsilon-\hat{{\bf d}})-\frac{1}{2}|\nabla({\bf d}^\epsilon-\hat{{\bf d}})|^2{\bf I}{\Big)}{\Big\|}_{L^1(\dot{B}_{2,1}^{\frac{N}{2}-1})}\\[2mm]
&\quad\lesssim{\Big(}1+\epsilon\|b^\epsilon\|_{L^\infty(\dot{B}_{2,1}^{\frac{N}{2}})}{\Big)}
\|{\bf d}^\epsilon-\hat{{\bf d}}\|_{L^2(\dot{B}_{2,1}^{\frac{N}{2}+1})}^2
\lesssim C_0^{\epsilon\nu}.
\end{split}
\ee
Collecting all the estimates above, we conclude that
\be\label{5.7}
\|(b^\epsilon,\mathcal{Q}{\bf u}^\epsilon)\|_{\tilde{L}^2(\dot{B}_{p,1}^{\frac{N}{p}-\frac{1}{2}})}
\lesssim C_0^{\epsilon\nu}\epsilon^{\frac{1}{2}},~~~~~p\geq\frac{2(N-1)}{N-3}.
\ee
On the other hand, define $\mathcal{P}:=Id-\mathcal{Q}$, then the rest part $(\mathcal{P}{\bf u}^\epsilon, {\bf d}^\epsilon)$ of system \eqref{1.4} satisfies
\begin{equation}\label{5.71}
 \left\{\begin{array}{ll}
 \medskip\displaystyle\partial_t\mathcal{P}{\bf u}^\epsilon-\mu\Delta\mathcal{P}{\bf u}^\epsilon=\mathcal{P}{\Big(}
 -{\bf u}^\epsilon\cdot\nabla{\bf u}^\epsilon
-\frac{\epsilon b^\epsilon}{1+\epsilon b^\epsilon}\mathcal{A}{\bf u}^\epsilon
-\frac{\xi}{1+\epsilon b^\epsilon}{\rm div}(\nabla{\bf d}^\epsilon\odot\nabla{\bf d}^\epsilon){\Big)},\\[2mm]
\displaystyle\partial_t{\bf d}^\epsilon+{\bf u}^\epsilon\cdot\nabla{\bf d}^\epsilon=\theta(\Delta{\bf d}^\epsilon+|\nabla{\bf d}^\epsilon|^2{\bf d}^\epsilon).
 \end{array}
 \right.
 \end{equation}
 Letting ${\bf w}^\epsilon=\mathcal{P}{\bf u}^\epsilon-{\bf u}$ and $\bar{\bf d}^\epsilon={\bf d}^\epsilon-{\bf d}$, then it follows from \eqref{1.3} and \eqref{5.71} that $({\bf w}^\epsilon, \bar{{\bf d}}^\epsilon)$ solves
 \be\label{5.8}
 \left\{
 \begin{array}{ll}
 \medskip\D\partial_t{\bf w}^\epsilon-\mu\Delta{\bf w}^\epsilon=-\mathcal{P}({\bf u}^\epsilon\cdot\nabla{\bf u}^\epsilon-{\bf u}\cdot\nabla{\bf u})-\mathcal{P}{\Big(}\frac{\epsilon b^\epsilon}{1+\epsilon b^\epsilon}\mathcal{A}{\bf u}^\epsilon{\Big)}\\
 \quad\quad\quad\quad\quad\quad\quad\medskip\D -\mathcal{P}{\Big(}\frac{\xi}{1+\epsilon b^\epsilon}{\rm div}(\nabla{\bf d}^\epsilon\odot\nabla{\bf d}^\epsilon)-\xi{\rm div}(\nabla{\bf d}\odot\nabla{\bf d}){\Big)},\\[3mm]
 \D \partial_t\bar{{\bf d}}^\epsilon-\theta\Delta\bar{{\bf d}}^\epsilon=
 -{\bf u}^\epsilon\cdot\nabla\bar{{\bf d}}^\epsilon-{\bf w}^\epsilon\cdot\nabla{\bf d}-\mathcal{Q}{\bf u}^\epsilon\cdot\nabla{\bf d}+\theta|\nabla {\bf d}|^2
 \bar{\bf d}^\epsilon+\theta|\nabla{\bf d}^\epsilon+\nabla{\bf d}||\nabla\bar{\bf d}^\epsilon|{\bf d}^\epsilon.
 \end{array}
 \right.
 \ee
We infer from the estimates for heat equation (see \cite{bahouri2011fourier, danchin2002zero}) that
\be\label{5.9}
\begin{split}
&\|{\bf w}^\epsilon(t)\|_{\dot{B}_{p,1}^{\frac{N}{p}-\frac{3}{2}}}+\mu\|{\bf w}^\epsilon\|_{L^1(\dot{B}_{p,1}^{\frac{N}{p}+\frac{1}{2}})}\\[2mm]
&\quad\lesssim\|{\bf w}_0^\epsilon\|_{\dot{B}_{p,1}^{\frac{N}{p}-\frac{3}{2}}}+\|\mathcal{P}({\bf u}^\epsilon\cdot\nabla{\bf u}^\epsilon-{\bf u}\cdot\nabla{\bf u})\|_{L^1(\dot{B}_{p,1}^{\frac{N}{p}-\frac{3}{2}})}
+{\Big\|}\mathcal{P}{\Big(}\frac{\epsilon b^\epsilon}{1+\epsilon b^\epsilon}\mathcal{A}{\bf u}^\epsilon{\Big)}{\Big\|}_{L^1(\dot{B}_{p,1}^{\frac{N}{p}-\frac{3}{2}})}\\[2mm]
&\quad\quad+{\Big\|}\mathcal{P}{\Big(}\frac{\xi}{1+\epsilon b^\epsilon}{\rm div}(\nabla{\bf d}^\epsilon\odot\nabla{\bf d}^\epsilon)-\xi{\rm div}(\nabla{\bf d}\odot\nabla{\bf d}){\Big)}{\Big\|}_{L^1(\dot{B}_{p,1}^{\frac{N}{p}-\frac{3}{2}})}.
\end{split}
\ee
By virtue of Corollary \ref{co2.1} and interpolation, there holds
\be\label{5.10}
\begin{split}
{\Big\|}\mathcal{P}{\Big(}\frac{\epsilon b^\epsilon}{1+\epsilon b^\epsilon}\mathcal{A}{\bf u}^\epsilon{\Big)}{\Big\|}_{L^1(\dot{B}_{p,1}^{\frac{N}{p}-\frac{3}{2}})}&\lesssim
{\Big\|}\frac{\epsilon b^\epsilon}{1+\epsilon b^\epsilon}\mathcal{A}{\bf u}^\epsilon{\Big\|}_{L^1(\dot{B}_{2,1}^{\frac{N}{2}-\frac{3}{2}})}
\lesssim\|\epsilon b^\epsilon\|_{L^4(\dot{B}_{2,1}^{\frac{N}{2}})}\|\mathcal{A}{\bf u}^\epsilon\|_{L^{\frac{4}{3}}(\dot{B}_{2,1}^{\frac{N}{2}-\frac{3}{2}})}\\[2mm]
&\lesssim\epsilon^{\frac{1}{2}}
\|b^\epsilon\|_{L^4(\tilde{B}_{\epsilon\nu}^{\frac{N}{2},4})}\|{\bf u}^\epsilon\|_{L^{\frac{4}{3}}(\dot{B}_{2,1}^{\frac{N}{2}+\frac{1}{2}})}
\lesssim (C_0^{\epsilon\nu})^2\epsilon^{\frac{1}{2}},
\end{split}
\ee
where we have used that
$$
b\in L^1(\tilde{B}_{\epsilon\nu}^{\frac{N}{2},1})\cap L^\infty(\tilde{B}_{\epsilon\nu}^{\frac{N}{2},\infty})\Rightarrow b\in L^m(\tilde{B}_{\epsilon\nu}^{\frac{N}{2},m}), ~~~{\rm for}~~~ 1\leq m \leq \infty,
$$
$$
{\bf u}\in L^\infty(\dot{B}_{2,1}^{\frac{N}{2}-1})\cap L^1(\dot{B}_{2,1}^{\frac{N}{2}+1})\Rightarrow {\bf u}\in L^m(\dot{B}_{2,1}^{\frac{N}{2}-1+\frac{2}{m}}),~~~{\rm for}~~~ 1\leq m \leq \infty,
$$
and
$$
\|b^\epsilon\|_{\tilde{B}_{\epsilon\nu}^{\frac{N}{2},4}}
\approx \|b^\epsilon\|_{\dot{B}_{2,1}^{\frac{N}{2}-\frac{1}{2}}}+(\epsilon\nu)^{\frac{1}{2}}
\|b^\epsilon\|_{\dot{B}_{2,1}^{\frac{N}{2}}}\Rightarrow
\|b^\epsilon\|_{\dot{B}_{2,1}^{\frac{N}{2}}}\lesssim
\epsilon^{-\frac{1}{2}}\|b^\epsilon\|_{\tilde{B}_{\epsilon\nu}^{\frac{N}{2},4}}.
$$
Noting that
$$
{\bf u}^\epsilon\cdot\nabla{\bf u}^\epsilon-{\bf u}\cdot\nabla{\bf u}
={\bf u}^\epsilon\cdot\nabla\mathcal{Q}{\bf u}^\epsilon+{\bf u}^\epsilon\cdot\nabla
{\bf w}^\epsilon+\mathcal{Q}{\bf u}^\epsilon\cdot\nabla{\bf u}+{\bf w}^\epsilon\cdot\nabla{\bf u},
$$
which, together with Corollary \ref{co2.1}, \eqref{5.7}, interpolation and Young inequality yields, for any $\delta>0$
\be\label{5.11}
\begin{split}
&\|\mathcal{P}({\bf u}^\epsilon\cdot\nabla{\bf u}^\epsilon-{\bf u}\cdot\nabla{\bf u})\|_{L^1(\dot{B}_{p,1}^{\frac{N}{p}-\frac{3}{2}})}\\[2mm]
&\quad\leq C\|{\bf u}^\epsilon\|_{L^2(\dot{B}_{2,1}^{\frac{N}{2}})}
\|\nabla\mathcal{Q}{\bf u}^\epsilon\|_{L^2(\dot{B}_{p,1}^{\frac{N}{p}-\frac{3}{2}})}
+C\|\nabla{\bf u}\|_{L^2(\dot{B}_{2,1}^{\frac{N}{2}-1})}
\|\mathcal{Q}{\bf u}^\epsilon\|_{L^2(\dot{B}_{p,1}^{\frac{N}{p}-\frac{1}{2}})}\\[2mm]
&\quad\quad\D +C\int_0^t{\Big(}\|{\bf u}^\epsilon\|_{\dot{B}_{2,1}^{\frac{N}{2}}}\|\nabla{\bf w}^\epsilon\|_{\dot{B}_{p,1}^{\frac{N}{p}-\frac{3}{2}}}
+\|\nabla{\bf u}\|_{\dot{B}_{2,1}^{\frac{N}{2}-1}}\|{\bf w}^\epsilon\|_{\dot{B}_{p,1}^{\frac{N}{p}-\frac{1}{2}}}{\Big)}dt^\prime\\[2mm]
&\quad\leq C(C_0^{\epsilon\nu})^2\epsilon^{\frac{1}{2}}
+\delta\|{\bf w}^\epsilon\|_{L^1(\dot{B}_{p,1}^{\frac{N}{p}+\frac{1}{2}})}
+C_{\delta}\int_0^t{\Big(}\|{\bf u}^\epsilon\|_{\dot{B}_{2,1}^{\frac{N}{2}}}^2+\|{\bf u}\|_{\dot{B}_{2,1}^{\frac{N}{2}}}^2{\Big)}\|{\bf w}^\epsilon\|_{\dot{B}_{p,1}^{\frac{N}{p}-\frac{3}{2}}}dt^\prime.
\end{split}
\ee
Moreover,
\be\label{5.12}
\begin{split}
&{\Big\|}\mathcal{P}{\Big(}\frac{\xi}{1+\epsilon b^\epsilon}{\rm div}(\nabla{\bf d}^\epsilon\odot\nabla{\bf d}^\epsilon)-\xi{\rm div}(\nabla{\bf d}\odot\nabla{\bf d}){\Big)}{\Big\|}_{L^1(\dot{B}_{p,1}^{\frac{N}{p}-\frac{3}{2}})}\\[2mm]
&\quad\lesssim{\Big\|}\xi{\rm div}(\nabla{\bf d}^\epsilon\odot\nabla{\bf d}^\epsilon
-\nabla{\bf d}\odot\nabla{\bf d}){\Big\|}_{L^1(\dot{B}_{p,1}^{\frac{N}{p}-\frac{3}{2}})}\\[2mm]
&\quad\quad+{\Big\|}\frac{\epsilon b^\epsilon}{1+\epsilon b^\epsilon}\xi{\rm div(\nabla{\bf d}^\epsilon\odot\nabla{\bf d}^\epsilon}){\Big\|}_{L^1(\dot{B}_{p,1}^{\frac{N}{p}-\frac{3}{2}})}\\[2mm]
&\quad:=I_1+I_2.
\end{split}
\ee
For the estimate of $I_1$, we have
\be\label{5.13}
\begin{split}
&I_1\lesssim\|\nabla{\bf d}^\epsilon+\nabla{\bf d}\|_{L^2(\dot{B}_{2,1}^{\frac{N}{2}})}\|\nabla\bar{\bf d}^\epsilon\|_{L^2(\dot{B}_{p,1}^{\frac{N}{p}-\frac{1}{2}})}\\[2mm]
&\quad \lesssim{\Big(}\|{\bf d}^\epsilon\|_{L^2(\dot{B}_{2,1}^{\frac{N}{2}+1})}+\|{\bf d}\|_{L^2(\dot{B}_{2,1}^{\frac{N}{2}+1})}{\Big)}\|\bar{\bf d}^\epsilon\|_{L^2(\dot{B}_{p,1}^{\frac{N}{p}+\frac{1}{2}})}.
\end{split}
\ee
For $I_2$, it follows that
\be\label{5.14}
\begin{split}
&I_2\lesssim\epsilon\|b^\epsilon\|_{L^\infty(\dot{B}_{2,1}^{\frac{N}{2}-\frac{1}{2}})}{\Big\|}
{\rm div(\nabla({\bf d}^\epsilon-\hat{\bf d})\odot\nabla({\bf d}^\epsilon-\hat{\bf d}))}{\Big\|}_{L^1(\dot{B}_{p,1}^{\frac{N}{p}-1})}\\[2mm]
&\quad\lesssim\epsilon^{\frac{1}{2}}
\|b^\epsilon\|_{L^\infty(\dot{B}_{2,1}^{\frac{N}{2}-1})}^{\frac{1}{2}}
{\Big(}\epsilon\nu
\|b^\epsilon\|_{L^\infty(\dot{B}_{2,1}^{\frac{N}{2}})}{\Big)}^{\frac{1}{2}}
\|{\bf d}^\epsilon-\hat{\bf d}\|_{L^2(\dot{B}_{p,1}^{\frac{N}{p}+1})}^2\\[2mm]
&\quad\lesssim\epsilon^{\frac{1}{2}}
\|b^\epsilon\|_{L^\infty(\tilde{B}_{\epsilon\nu}^{\frac{N}{2},\infty})}
\|{\bf d}^\epsilon-\hat{\bf d}\|_{L^2(\dot{B}_{2,1}^{\frac{N}{2}+1})}^2
\lesssim (C_0^{\epsilon\nu})^3\epsilon^{\frac{1}{2}}.
\end{split}
\ee
Thus, substituting \eqref{5.10}-\eqref{5.14} into \eqref{5.9}, and choosing $\delta$ in \eqref{5.11} sufficiently small, it is not difficult to obtain
\be\label{5.15}
\begin{split}
&\|{\bf w}^\epsilon\|_{L^\infty(\dot{B}_{p,1}^{\frac{N}{p}-\frac{3}{2}})}+\frac{3}{4}\mu\|{\bf w}^\epsilon\|_{L^1(\dot{B}_{p,1}^{\frac{N}{p}+\frac{1}{2}})}\\[2mm]
&\quad\lesssim\|{\bf w}_0^\epsilon\|_{\dot{B}_{p,1}^{\frac{N}{p}-\frac{3}{2}}}
+C_0^{\epsilon\nu}\epsilon^{\frac{1}{2}}
+\int_0^t{\Big(}\|{\bf u}^\epsilon\|_{\dot{B}_{2,1}^{\frac{N}{2}}}^2+\|{\bf u}\|_{\dot{B}_{2,1}^{\frac{N}{2}}}^2{\Big)}\|{\bf w}^\epsilon\|_{\dot{B}_{p,1}^{\frac{N}{p}-\frac{3}{2}}}dt^\prime\\[2mm]
&\quad\quad+{\Big(}\|{\bf d}^\epsilon\|_{L^2(\dot{B}_{2,1}^{\frac{N}{2}+1})}+\|{\bf d}\|_{L^2(\dot{B}_{2,1}^{\frac{N}{2}+1})}{\Big)}\|\bar{\bf d}^\epsilon\|_{L^2(\dot{B}_{p,1}^{\frac{N}{p}+\frac{1}{2}})}.
\end{split}
\ee
In order to close the estimates of ${\bf w}^\epsilon$, we now aim to bound  the term $\|\bar{\bf d}^\epsilon\|_{L^2(\dot{B}_{p,1}^{\frac{N}{p}+\frac{1}{2}})}$ of \eqref{5.15}. For this,  we take advantage of the estimates of heat equation $\eqref{5.8}_2$ and obtain that
\be\label{5.16}
\begin{split}
&\|\bar{\bf d}^\epsilon\|_{L^\infty(\dot{B}_{p,1}^{\frac{N}{p}-\frac{1}{2}})}+
\theta\|\bar{\bf d}^\epsilon\|_{L^1(\dot{B}_{p,1}^{\frac{N}{p}+\frac{3}{2}})}\\[2mm]
&\quad\lesssim \|\bar{\bf d}_0^\epsilon\|_{\dot{B}_{p,1}^{\frac{N}{p}-\frac{1}{2}}}
+\|{\bf u}^\epsilon\cdot\nabla\bar{{\bf d}}^\epsilon\|_{L^1(\dot{B}_{p,1}^{\frac{N}{p}-\frac{1}{2}})}
+\|{\bf w}^\epsilon\cdot\nabla{\bf d}\|_{L^1(\dot{B}_{p,1}^{\frac{N}{p}-\frac{1}{2}})}+\|\mathcal{Q}{\bf u}^\epsilon\cdot\nabla{\bf d}\|_{L^1(\dot{B}_{p,1}^{\frac{N}{p}-\frac{1}{2}})}\\[2mm]
&\quad\quad+\||\nabla {\bf d}|^2
 \bar{\bf d}^\epsilon\|_{L^1(\dot{B}_{p,1}^{\frac{N}{p}-\frac{1}{2}})}
+\||\nabla{\bf d}^\epsilon+\nabla{\bf d}||\nabla\bar{\bf d}^\epsilon|{\bf d}^\epsilon\|_{L^1(\dot{B}_{p,1}^{\frac{N}{p}-\frac{1}{2}})}.
\end{split}
\ee
Next, we estimate the terms of the right hand of \eqref{5.16} one by one.
\be\label{5.17}
\begin{split}
\|{\bf u}^\epsilon\cdot\nabla\bar{{\bf d}}^\epsilon\|_{L^1(\dot{B}_{p,1}^{\frac{N}{p}-\frac{1}{2}})}
&\lesssim \|{\bf u}^\epsilon\|_{L^2(\dot{B}_{2,1}^{\frac{N}{2}})}
\|\nabla\bar{{\bf d}}^\epsilon\|_{L^2(\dot{B}_{p,1}^{\frac{N}{p}-\frac{1}{2}})}\\[2mm]
&\lesssim \|{\bf u}^\epsilon\|_{L^2(\dot{B}_{2,1}^{\frac{N}{2}})}
\|\bar{{\bf d}}^\epsilon\|_{L^\infty(\dot{B}_{p,1}^{\frac{N}{p}-\frac{1}{2}})}^{\frac{1}{2}}
\|\bar{{\bf d}}^\epsilon\|_{L^1(\dot{B}_{p,1}^{\frac{N}{p}+\frac{3}{2}})}^{\frac{1}{2}}.
\end{split}
\ee
\medskip
\be\label{5.18}
\begin{split}
\|{\bf w}^\epsilon\cdot\nabla{\bf d}\|_{L^1(\dot{B}_{p,1}^{\frac{N}{p}-\frac{1}{2}})}&\lesssim\|{\bf w}^\epsilon\|_{L^2(\dot{B}_{p,1}^{\frac{N}{p}-\frac{1}{2}})}
\|\nabla{\bf d}\|_{L^2(\dot{B}_{2,1}^{\frac{N}{2}})}\\[2mm]
&\lesssim\|{\bf w}^\epsilon\|_{L^\infty(\dot{B}_{p,1}^{\frac{N}{p}-\frac{3}{2}})}^{\frac{1}{2}}
\|{\bf w}^\epsilon\|_{L^1(\dot{B}_{p,1}^{\frac{N}{p}+\frac{1}{2}})}^{\frac{1}{2}}
\|{\bf d}\|_{L^2(\dot{B}_{2,1}^{\frac{N}{2}+1})}.
\end{split}
\ee
\medskip
\be\label{5.19}
\begin{split}
\|\mathcal{Q}{\bf u}^\epsilon\cdot\nabla{\bf d}\|_{L^1(\dot{B}_{p,1}^{\frac{N}{p}-\frac{1}{2}})}
&\lesssim\|\mathcal{Q}{\bf u}^\epsilon\|_{L^2(\dot{B}_{p,1}^{\frac{N}{p}-\frac{1}{2}})}
\|\nabla{\bf d}\|_{L^2(\dot{B}_{2,1}^{\frac{N}{2}})}
\lesssim\|\mathcal{Q}{\bf u}^\epsilon\|_{L^2(\dot{B}_{p,1}^{\frac{N}{p}-\frac{1}{2}})}
\|{\bf d}\|_{L^2(\dot{B}_{2,1}^{\frac{N}{2}+1})}.
\end{split}
\ee
\medskip
\be\label{5.20}
\begin{split}
\||\nabla {\bf d}|^2\bar{\bf d}^\epsilon\|_{L^1(\dot{B}_{p,1}^{\frac{N}{p}-\frac{1}{2}})}
&\lesssim\||\nabla {\bf d}|^2
 \|_{L^1(\dot{B}_{2,1}^{\frac{N}{2}})}
 \|\bar{\bf d}^\epsilon\|_{L^\infty(\dot{B}_{p,1}^{\frac{N}{p}-\frac{1}{2}})}
\lesssim\|{\bf d}
 \|_{L^2(\dot{B}_{2,1}^{\frac{N}{2}+1})}^2
 \|\bar{\bf d}^\epsilon\|_{L^\infty(\dot{B}_{p,1}^{\frac{N}{p}-\frac{1}{2}})}.
\end{split}
\ee
\be\label{5.21}
\begin{split}
&\||\nabla{\bf d}^\epsilon+\nabla{\bf d}||\nabla\bar{\bf d}^\epsilon|{\bf d}^\epsilon\|_{L^1(\dot{B}_{p,1}^{\frac{N}{p}-\frac{1}{2}})}\\[2mm]
&\quad\lesssim\||\nabla{\bf d}^\epsilon||\nabla\bar{\bf d}^\epsilon|{\bf d}^\epsilon\|_{L^1(\dot{B}_{p,1}^{\frac{N}{p}-\frac{1}{2}})}+
\||\nabla{\bf d}||\nabla\bar{\bf d}^\epsilon|{\bf d}^\epsilon\|_{L^1(\dot{B}_{p,1}^{\frac{N}{p}-\frac{1}{2}})}\\[2mm]
&\quad\lesssim\||\nabla{\bf d}^\epsilon|{\bf d}^\epsilon\|_{L^2(\dot{B}_{2,1}^{\frac{N}{2}})}
\|\nabla\bar{\bf d}^\epsilon\|_{L^2(\dot{B}_{p,1}^{\frac{N}{p}-\frac{1}{2}})}+
\||\nabla{\bf d}|{\bf d}^\epsilon\|_{L^2(\dot{B}_{2,1}^{\frac{N}{2}})}
\|\nabla\bar{\bf d}^\epsilon\|_{L^2(\dot{B}_{p,1}^{\frac{N}{p}-\frac{1}{2}})}\\[2mm]
&\quad\lesssim\|\nabla{\bf d}^\epsilon\|_{L^2(\dot{B}_{2,1}^{\frac{N}{2}})}
\|{\bf d}^\epsilon\|_{L^\infty(\dot{B}_{2,1}^{\frac{N}{2}})}
\|\bar{\bf d}^\epsilon\|_{L^2(\dot{B}_{p,1}^{\frac{N}{p}+\frac{1}{2}})}
+\|\nabla{\bf d}\|_{L^2(\dot{B}_{2,1}^{\frac{N}{2}})}
\|{\bf d}^\epsilon\|_{L^\infty(\dot{B}_{2,1}^{\frac{N}{2}})}
\|\bar{\bf d}^\epsilon\|_{L^2(\dot{B}_{p,1}^{\frac{N}{p}+\frac{1}{2}})}\\[2mm]
&\quad\lesssim{\Big(}\|{\bf d}^\epsilon\|_{L^2(\dot{B}_{2,1}^{\frac{N}{2}+1})}+
\|{\bf d}\|_{L^2(\dot{B}_{2,1}^{\frac{N}{2}+1})}{\Big)}
\|{\bf d}^\epsilon\|_{L^\infty(\dot{B}_{2,1}^{\frac{N}{2}})}
\|\bar{{\bf d}}^\epsilon\|_{L^\infty(\dot{B}_{p,1}^{\frac{N}{p}-\frac{1}{2}})}^{\frac{1}{2}}
\|\bar{{\bf d}}^\epsilon\|_{L^1(\dot{B}_{p,1}^{\frac{N}{p}+\frac{3}{2}})}^{\frac{1}{2}}.
\end{split}
\ee
It follows from the above estimates \eqref{5.16}-\eqref{5.21} that
\be\label{5.22}
\begin{split}
&\|\bar{\bf d}^\epsilon\|_{L^\infty(\dot{B}_{p,1}^{\frac{N}{p}-\frac{1}{2}})}+
\theta\|\bar{\bf d}^\epsilon\|_{L^1(\dot{B}_{p,1}^{\frac{N}{p}+\frac{3}{2}})}\\[2mm]
&\quad\lesssim \|\bar{\bf d}_0^\epsilon\|_{\dot{B}_{p,1}^{\frac{N}{p}-\frac{1}{2}}}+\|{\bf u}^\epsilon\|_{L^2(\dot{B}_{2,1}^{\frac{N}{2}})}
\|\bar{{\bf d}}^\epsilon\|_{L^\infty(\dot{B}_{p,1}^{\frac{N}{p}-\frac{1}{2}})}^{\frac{1}{2}}
\|\bar{{\bf d}}^\epsilon\|_{L^1(\dot{B}_{p,1}^{\frac{N}{p}+\frac{3}{2}})}^{\frac{1}{2}}\\[2mm]
&\quad\quad+\|{\bf w}^\epsilon\|_{L^\infty(\dot{B}_{p,1}^{\frac{N}{p}-\frac{3}{2}})}^{\frac{1}{2}}
\|{\bf w}^\epsilon\|_{L^1(\dot{B}_{p,1}^{\frac{N}{p}+\frac{1}{2}})}^{\frac{1}{2}}
\|{\bf d}\|_{L^2(\dot{B}_{2,1}^{\frac{N}{2}+1})}\\[2mm]
&\quad\quad+\|\mathcal{Q}{\bf u}^\epsilon\|_{L^2(\dot{B}_{p,1}^{\frac{N}{p}-\frac{1}{2}})}
\|{\bf d}\|_{L^2(\dot{B}_{2,1}^{\frac{N}{2}+1})}+\|{\bf d}
\|_{L^2(\dot{B}_{2,1}^{\frac{N}{2}+1})}^2
\|\bar{\bf d}^\epsilon\|_{L^\infty(\dot{B}_{p,1}^{\frac{N}{p}-\frac{1}{2}})}\\[2mm]
&\quad\quad+{\Big(}\|{\bf d}^\epsilon\|_{L^2(\dot{B}_{2,1}^{\frac{N}{2}+1})}+
\|{\bf d}\|_{L^2(\dot{B}_{2,1}^{\frac{N}{2}+1})}{\Big)}
\|{\bf d}^\epsilon\|_{L^\infty(\dot{B}_{2,1}^{\frac{N}{2}})}
\|\bar{{\bf d}}^\epsilon\|_{L^\infty(\dot{B}_{p,1}^{\frac{N}{p}-\frac{1}{2}})}^{\frac{1}{2}}
\|\bar{{\bf d}}^\epsilon\|_{L^1(\dot{B}_{p,1}^{\frac{N}{p}+\frac{3}{2}})}^{\frac{1}{2}}.
\end{split}
\ee
Now combining \eqref{5.15} with \eqref{5.22} and using Young inequality, we get
\be\label{5.23}
\begin{split}
&\|{\bf w}^\epsilon\|_{L^\infty(\dot{B}_{p,1}^{\frac{N}{p}-\frac{3}{2}})}+\frac{\mu}{2}\|{\bf w}^\epsilon\|_{L^1(\dot{B}_{p,1}^{\frac{N}{p}+\frac{1}{2}})}
+\|\bar{\bf d}^\epsilon\|_{L^\infty(\dot{B}_{p,1}^{\frac{N}{p}-\frac{1}{2}})}+
\frac{\theta}{2}\|\bar{\bf d}^\epsilon\|_{L^1(\dot{B}_{p,1}^{\frac{N}{p}+\frac{3}{2}})}\\[2mm]
&\quad\lesssim{\Big(}\|{\bf w}_0^\epsilon\|_{\dot{B}_{p,1}^{\frac{N}{p}-\frac{3}{2}}}
+\|\bar{\bf d}_0^\epsilon\|_{\dot{B}_{p,1}^{\frac{N}{p}-\frac{1}{2}}}{\Big)}
+C_0^{\epsilon\nu}\epsilon^{\frac{1}{2}}\\[2mm]
&\quad\quad+\int_0^t{\Big(}\|{\bf u}^\epsilon(t^\prime)\|_{\dot{B}_{2,1}^{\frac{N}{2}}}^2+\|{\bf u}(t^\prime)\|_{\dot{B}_{2,1}^{\frac{N}{2}}}^2+\|{\bf d}(t^\prime)\|_{\dot{B}_{2,1}^{\frac{N}{2}+1}}^2{\Big)}\|{\bf w}^\epsilon(t^\prime)\|_{\dot{B}_{p,1}^{\frac{N}{p}-\frac{3}{2}}}dt^\prime\\[2mm]
&\quad\quad+\int_0^t{\Big(}\|{\bf d}^\epsilon(t^\prime)\|_{\dot{B}_{2,1}^{\frac{N}{2}+1}}^2+\|{\bf d}(t^\prime)\|_{\dot{B}_{2,1}^{\frac{N}{2}+1}}^2+\|{\bf u}^\epsilon(t^\prime)\|_{\dot{B}_{2,1}^{\frac{N}{2}}}^2{\Big)}\|\bar{\bf d}^\epsilon(t^\prime)\|_{\dot{B}_{p,1}^{\frac{N}{p}-\frac{1}{2}}}dt^\prime.
\end{split}
\ee
Thus, Gronwall's inequality guarantees that
\be\label{5.24}
\begin{split}
&\|{\bf w}^\epsilon\|_{L^\infty(\dot{B}_{p,1}^{\frac{N}{p}-\frac{3}{2}})}+\frac{\mu}{2}\|{\bf w}^\epsilon\|_{L^1(\dot{B}_{p,1}^{\frac{N}{p}+\frac{1}{2}})}
+\|\bar{\bf d}^\epsilon\|_{L^\infty(\dot{B}_{p,1}^{\frac{N}{p}-\frac{1}{2}})}+
\frac{\theta}{2}\|\bar{\bf d}^\epsilon\|_{L^1(\dot{B}_{p,1}^{\frac{N}{p}+\frac{3}{2}})}\\[2mm]
&\quad\lesssim{\Big(}\|{\bf w}_0^\epsilon\|_{\dot{B}_{p,1}^{\frac{N}{p}-\frac{3}{2}}}
+\|\bar{\bf d}_0^\epsilon\|_{\dot{B}_{p,1}^{\frac{N}{p}-\frac{1}{2}}}{\Big)}
+C_0^{\epsilon\nu}\epsilon^{\frac{1}{2}}.
\end{split}
\ee

{\bf Case 2}: $N=3$. Applying Proposition \ref{pr5.1} to \eqref{5.5} with $\bar{p}=2,~\bar{r}=\infty,~s=\frac{1}{2}$ and $r=\frac{2p}{p-2}$, similar to \eqref{5.7}, we obtain
\be\label{5.25}
\|(b^\epsilon,\mathcal{Q}{\bf u}^\epsilon)\|_{{L}^{\frac{2p}{p-2}}(\dot{B}_{p,1}^{\frac{2}{p}-\frac{1}{2}})}
\lesssim \|(b^\epsilon,\mathcal{Q}{\bf u}^\epsilon)\|_{\tilde{L}^{\frac{2p}{p-2}}(\dot{B}_{p,1}^{\frac{2}{p}-\frac{1}{2}})}
\lesssim C_0^{\epsilon\nu}\epsilon^{\frac{1}{2}-\frac{1}{p}},~~~~~p\geq 2.
\ee
Use the following interpolation for $2\leq q<+\infty$,
$$
L^2{\Big(}\mathbb{R}^+; \dot{B}_{\frac{q+2}{2}, 1}^{{(14-q)}/{(2q+4)}}{\Big)}
={\Big[}L^1(\mathbb{R}^+; \dot{B}_{2, 1}^{\frac{5}{2}});~{{L}^{\frac{2q}{q-2}}(\mathbb{R}^+; \dot{B}_{q,1}^{\frac{2}{q}-\frac{1}{2}})}{\Big]}_{\frac{2}{q+2}}.
$$
Make the change of parameter $p=\frac{q+2}{2}$. Due to \eqref{5.25}, there holds
\be\label{5.26}
\|\mathcal{Q}{\bf u}^\epsilon\|_{L^2(\dot{B}_{p,1}^{\frac{4}{p}-\frac{1}{2}})}\lesssim
C_0^{\epsilon\nu}\epsilon^{\frac{1}{2}-\frac{1}{p}},~~~~2\leq p<+\infty.
\ee

In the following, we want to  prove that
\be\label{5.27}
\begin{split}
&\|{\bf w}^\epsilon\|_{L^\infty(\dot{B}_{p,1}^{\frac{4}{p}-\frac{3}{2}})}+\|{\bf w}^\epsilon\|_{L^1(\dot{B}_{p,1}^{\frac{4}{p}+\frac{1}{2}})}
+\|\bar{\bf d}^\epsilon\|_{L^\infty(\dot{B}_{p,1}^{\frac{4}{p}-\frac{1}{2}})}+
\|\bar{\bf d}^\epsilon\|_{L^1(\dot{B}_{p,1}^{\frac{4}{p}+\frac{3}{2}})}\\[2mm]
&\quad\lesssim {\Big(}\|{\bf w}_0^\epsilon\|_{\dot{B}_{p,1}^{\frac{4}{p}-\frac{3}{2}}}
+\|\bar{\bf d}_0^\epsilon\|_{\dot{B}_{p,1}^{\frac{4}{p}-\frac{1}{2}}}{\Big)}
+C_0^{\epsilon\nu}\epsilon^{\frac{1}{2}-\frac{1}{p}}.
\end{split}
\ee
To this end, similar to the estimates of \eqref{5.10}-\eqref{5.14}, we have
\be\label{5.28}
\begin{split}
&{\Big\|}\mathcal{P}{\Big(}\frac{\epsilon b^\epsilon}{1+\epsilon b^\epsilon}\mathcal{A}{\bf u}^\epsilon{\Big)}{\Big\|}_{L^1(\dot{B}_{p,1}^{\frac{4}{p}-\frac{3}{2}})}
\lesssim
\epsilon\|b^\epsilon\|_{L^\infty(\dot{B}_{2,1}^{1+\frac{1}{p}})}\|\mathcal{A}{\bf u}^\epsilon\|_{L^1(\dot{B}_{p,1}^{\frac{3}{p}-1})}\\[2mm]
&\quad\lesssim \epsilon(\epsilon\nu)^{-\frac{1}{2}-\frac{1}{p}}\| b^\epsilon\|_{L^\infty(\dot{B}_{2,1}^{\frac{1}{2}})}^{\frac{1}{2}-\frac{1}{p}}
(\epsilon\nu\| b^\epsilon\|_{L^\infty(\dot{B}_{2,1}^{\frac{3}{2}})})^{\frac{1}{2}+\frac{1}{p}}\|{\bf u}^\epsilon\|_{L^1(\dot{B}_{2,1}^{\frac{5}{2}})}\\[2mm]
&\quad\lesssim\epsilon^{\frac{1}{2}-\frac{1}{p}}
\|b^\epsilon\|_{L^\infty(\tilde{B}_{\epsilon\nu}^{\frac{3}{2},\infty})}\|{\bf u}^\epsilon\|_{L^1(\dot{B}_{2,1}^{\frac{5}{2}})}
\lesssim (C_0^{\epsilon\nu})^2\epsilon^{\frac{1}{2}-\frac{1}{p}},
\end{split}
\ee
\be\label{5.29}
\begin{split}
&\|\mathcal{P}({\bf u}^\epsilon\cdot\nabla{\bf u}^\epsilon-{\bf u}\cdot\nabla{\bf u})\|_{L^1(\dot{B}_{p,1}^{\frac{4}{p}-\frac{3}{2}})}\\[2mm]
&\quad\leq C\|{\bf u}^\epsilon\|_{L^2(\dot{B}_{2,1}^{\frac{N}{2}})}
\|\nabla\mathcal{Q}{\bf u}^\epsilon\|_{L^2(\dot{B}_{p,1}^{\frac{4}{p}-\frac{3}{2}})}
+C\|\nabla{\bf u}\|_{L^2(\dot{B}_{2,1}^{\frac{N}{2}-1})}
\|\mathcal{Q}{\bf u}^\epsilon\|_{L^2(\dot{B}_{p,1}^{\frac{4}{p}-\frac{1}{2}})}\\[2mm]
&\quad\quad\D +C\int_0^t{\Big(}\|{\bf u}^\epsilon\|_{\dot{B}_{2,1}^{\frac{3}{2}}}\|\nabla{\bf w}^\epsilon\|_{\dot{B}_{p,1}^{\frac{4}{p}-\frac{3}{2}}}
+\|\nabla{\bf u}\|_{\dot{B}_{2,1}^{\frac{1}{2}}}\|{\bf w}^\epsilon\|_{\dot{B}_{p,1}^{\frac{4}{p}-\frac{1}{2}}}{\Big)}dt^\prime\\[2mm]
&\quad\leq C(C_0^{\epsilon\nu})^2\epsilon^{\frac{1}{2}-\frac{1}{p}}
+\delta\|{\bf w}^\epsilon\|_{L^1(\dot{B}_{p,1}^{\frac{4}{p}+\frac{1}{2}})}
+C_{\delta}\int_0^t{\Big(}\|{\bf u}^\epsilon\|_{\dot{B}_{2,1}^{\frac{3}{2}}}^2+\|{\bf u}\|_{\dot{B}_{2,1}^{\frac{3}{2}}}^2{\Big)}\|{\bf w}^\epsilon\|_{\dot{B}_{p,1}^{\frac{4}{p}-\frac{3}{2}}}dt^\prime.
\end{split}
\ee
Moreover,
\be\label{5.30}
\begin{split}
&{\Big\|}\mathcal{P}{\Big(}\frac{\xi}{1+\epsilon b^\epsilon}{\rm div}(\nabla{\bf d}^\epsilon\odot\nabla{\bf d}^\epsilon)-\xi{\rm div}(\nabla{\bf d}\odot\nabla{\bf d}){\Big)}{\Big\|}_{L^1(\dot{B}_{p,1}^{\frac{4}{p}-\frac{3}{2}})}\\[2mm]
&\quad\lesssim{\Big\|}\xi{\rm div}(\nabla{\bf d}^\epsilon\odot\nabla{\bf d}^\epsilon
-\nabla{\bf d}\odot\nabla{\bf d}){\Big\|}_{L^1(\dot{B}_{p,1}^{\frac{4}{p}-\frac{3}{2}})}\\[2mm]
&\quad\quad+{\Big\|}\frac{\epsilon b^\epsilon}{1+\epsilon b^\epsilon}\xi{\rm div(\nabla{\bf d}^\epsilon\odot\nabla{\bf d}^\epsilon)}{\Big\|}_{L^1(\dot{B}_{p,1}^{\frac{4}{p}-\frac{3}{2}})}:=H_1+H_2.
\end{split}
\ee
For the estimate of $H_1$, we have
\be\label{5.31}
\begin{split}
&H_1\lesssim\|\nabla{\bf d}^\epsilon+\nabla{\bf d}\|_{L^2(\dot{B}_{2,1}^{\frac{3}{2}})}\|\nabla\bar{\bf d}^\epsilon\|_{L^2(\dot{B}_{p,1}^{\frac{4}{p}-\frac{1}{2}})}\\[2mm]
&\quad \lesssim{\Big(}\|{\bf d}^\epsilon\|_{L^2(\dot{B}_{2,1}^{\frac{5}{2}})}+\|{\bf d}\|_{L^2(\dot{B}_{2,1}^{\frac{5}{2}})}{\Big)}\|\bar{\bf d}^\epsilon\|_{L^2(\dot{B}_{p,1}^{\frac{4}{p}+\frac{1}{2}})}.
\end{split}
\ee
For $H_2$, it follows that
\be\label{5.32}
\begin{split}
&H_2\lesssim\epsilon\|b^\epsilon\|_{L^\infty(\dot{B}_{2,1}^{1+\frac{1}{p}})}{\Big\|}
{\rm div(\nabla({\bf d}^\epsilon-\hat{\bf d})\odot\nabla({\bf d}^\epsilon-\hat{\bf d}))}{\Big\|}_{L^1(\dot{B}_{p,1}^{\frac{3}{p}-1})}\\[2mm]
&\quad\lesssim \epsilon(\epsilon\nu)^{-\frac{1}{2}-\frac{1}{p}}\| b^\epsilon\|_{L^\infty(\dot{B}_{2,1}^{\frac{1}{2}})}^{\frac{1}{2}-\frac{1}{p}}
(\epsilon\nu\| b^\epsilon\|_{L^\infty(\dot{B}_{2,1}^{\frac{3}{2}})})^{\frac{1}{2}+\frac{1}{p}}
\|{\bf d}^\epsilon-\hat{\bf d}\|_{L^2(\dot{B}_{p,1}^{\frac{3}{p}+1})}^2\\[2mm]
&\quad\lesssim\epsilon^{\frac{1}{2}-\frac{1}{p}}
\|b^\epsilon\|_{L^\infty(\tilde{B}_{\epsilon\nu}^{\frac{3}{2},\infty})}
\|{\bf d}^\epsilon-\hat{\bf d}\|_{L^2(\dot{B}_{2,1}^{\frac{5}{2}})}^2
\quad\lesssim (C_0^{\epsilon\nu})^3\epsilon^{\frac{1}{2}-\frac{1}{p}}.
\end{split}
\ee
Therefore, in view of the estimates of heat equation, similar to \eqref{5.15}, we have
\be\label{5.33}
\begin{split}
&\|{\bf w}^\epsilon\|_{L^\infty(\dot{B}_{p,1}^{\frac{4}{p}-\frac{3}{2}})}+\frac{3}{4}\mu\|{\bf w}^\epsilon\|_{L^1(\dot{B}_{p,1}^{\frac{4}{p}+\frac{1}{2}})}\\[2mm]
&\quad\lesssim\|{\bf w}_0^\epsilon\|_{\dot{B}_{p,1}^{\frac{4}{p}-\frac{3}{2}}}
+C_0^{\epsilon\nu}\epsilon^{\frac{1}{2}-\frac{1}{p}}
+\int_0^t{\Big(}\|{\bf u}^\epsilon\|_{\dot{B}_{2,1}^{\frac{3}{2}}}^2+\|{\bf u}\|_{\dot{B}_{2,1}^{\frac{3}{2}}}^2{\Big)}\|{\bf w}^\epsilon\|_{\dot{B}_{p,1}^{\frac{4}{p}-\frac{3}{2}}}dt^\prime\\[2mm]
&\quad\quad+{\Big(}\|{\bf d}^\epsilon\|_{L^2(\dot{B}_{2,1}^{\frac{5}{2}})}+\|{\bf d}\|_{L^2(\dot{B}_{2,1}^{\frac{5}{2}})}{\Big)}\|\bar{\bf d}^\epsilon\|_{L^2(\dot{B}_{p,1}^{\frac{4}{p}+\frac{1}{2}})}.
\end{split}
\ee
In order to close the estimates of ${\bf w}^\epsilon$, we must bound  the term $\|\bar{\bf d}^\epsilon\|_{L^2(\dot{B}_{p,1}^{\frac{4}{p}+\frac{1}{2}})}$ of \eqref{5.33}. For this purpose,  similar to \eqref{5.16}, it follows that
\be\label{5.34}
\begin{split}
&\|\bar{\bf d}^\epsilon\|_{L^\infty(\dot{B}_{p,1}^{\frac{4}{p}-\frac{1}{2}})}+
\theta\|\bar{\bf d}^\epsilon\|_{L^1(\dot{B}_{p,1}^{\frac{4}{p}+\frac{3}{2}})}\\[2mm]
&\quad\lesssim \|\bar{\bf d}_0^\epsilon\|_{\dot{B}_{p,1}^{\frac{4}{p}-\frac{1}{2}}}
+\|{\bf u}^\epsilon\cdot\nabla\bar{{\bf d}}^\epsilon\|_{L^1(\dot{B}_{p,1}^{\frac{4}{p}-\frac{1}{2}})}
+\|{\bf w}^\epsilon\cdot\nabla{\bf d}\|_{L^1(\dot{B}_{p,1}^{\frac{4}{p}-\frac{1}{2}})}+\|\mathcal{Q}{\bf u}^\epsilon\cdot\nabla{\bf d}\|_{L^1(\dot{B}_{p,1}^{\frac{4}{p}-\frac{1}{2}})}\\[2mm]
&\quad\quad+\||\nabla {\bf d}|^2
 \bar{\bf d}^\epsilon\|_{L^1(\dot{B}_{p,1}^{\frac{4}{p}-\frac{1}{2}})}
+\||\nabla{\bf d}^\epsilon+\nabla{\bf d}||\nabla\bar{\bf d}^\epsilon|{\bf d}^\epsilon\|_{L^1(\dot{B}_{p,1}^{\frac{4}{p}-\frac{1}{2}})}.
\end{split}
\ee

Next, we estimate the terms of the right hand of \eqref{5.34} as follows.
\be\label{5.35}
\begin{split}
\|{\bf u}^\epsilon\cdot\nabla\bar{{\bf d}}^\epsilon\|_{L^1(\dot{B}_{p,1}^{\frac{4}{p}-\frac{1}{2}})}
&\lesssim \|{\bf u}^\epsilon\|_{L^2(\dot{B}_{2,1}^{\frac{3}{2}})}
\|\nabla\bar{{\bf d}}^\epsilon\|_{L^2(\dot{B}_{p,1}^{\frac{4}{p}-\frac{1}{2}})}\\[2mm]
&\lesssim \|{\bf u}^\epsilon\|_{L^2(\dot{B}_{2,1}^{\frac{3}{2}})}
\|\bar{{\bf d}}^\epsilon\|_{L^\infty(\dot{B}_{p,1}^{\frac{4}{p}-\frac{1}{2}})}^{\frac{1}{2}}
\|\bar{{\bf d}}^\epsilon\|_{L^1(\dot{B}_{p,1}^{\frac{4}{p}+\frac{3}{2}})}^{\frac{1}{2}},
\end{split}
\ee
\be\label{5.36}
\begin{split}
\|{\bf w}^\epsilon\cdot\nabla{\bf d}\|_{L^1(\dot{B}_{p,1}^{\frac{4}{p}-\frac{1}{2}})}&\lesssim\|{\bf w}^\epsilon\|_{L^2(\dot{B}_{p,1}^{\frac{4}{p}-\frac{1}{2}})}
\|\nabla{\bf d}\|_{L^2(\dot{B}_{2,1}^{\frac{3}{2}})}\\[2mm]
&\lesssim\|{\bf w}^\epsilon\|_{L^\infty(\dot{B}_{p,1}^{\frac{4}{p}-\frac{3}{2}})}^{\frac{1}{2}}
\|{\bf w}^\epsilon\|_{L^1(\dot{B}_{p,1}^{\frac{4}{p}+\frac{1}{2}})}^{\frac{1}{2}}
\|{\bf d}\|_{L^2(\dot{B}_{2,1}^{\frac{5}{2}})},
\end{split}
\ee
\be\label{5.37}
\begin{split}
\|\mathcal{Q}{\bf u}^\epsilon\cdot\nabla{\bf d}\|_{L^1(\dot{B}_{p,1}^{\frac{4}{p}-\frac{1}{2}})}
&\lesssim\|\mathcal{Q}{\bf u}^\epsilon\|_{L^2(\dot{B}_{p,1}^{\frac{4}{p}-\frac{1}{2}})}
\|\nabla{\bf d}\|_{L^2(\dot{B}_{2,1}^{\frac{3}{2}})}
\lesssim\|\mathcal{Q}{\bf u}^\epsilon\|_{L^2(\dot{B}_{p,1}^{\frac{4}{p}-\frac{1}{2}})}
\|{\bf d}\|_{L^2(\dot{B}_{2,1}^{\frac{5}{2}})},
\end{split}
\ee
\be\label{5.38}
\begin{split}
\||\nabla {\bf d}|^2\bar{\bf d}^\epsilon\|_{L^1(\dot{B}_{p,1}^{\frac{4}{p}-\frac{1}{2}})}
&\lesssim\||\nabla {\bf d}|^2
 \|_{L^1(\dot{B}_{2,1}^{\frac{3}{2}})}
 \|\bar{\bf d}^\epsilon\|_{L^\infty(\dot{B}_{p,1}^{\frac{4}{p}-\frac{1}{2}})}
\lesssim\|{\bf d}
 \|_{L^2(\dot{B}_{2,1}^{\frac{5}{2}})}^2
 \|\bar{\bf d}^\epsilon\|_{L^\infty(\dot{B}_{p,1}^{\frac{4}{p}-\frac{1}{2}})},
\end{split}
\ee
\be\label{5.39}
\begin{split}
&\||\nabla{\bf d}^\epsilon+\nabla{\bf d}||\nabla\bar{\bf d}^\epsilon|{\bf d}^\epsilon\|_{L^1(\dot{B}_{p,1}^{\frac{4}{p}-\frac{1}{2}})}\\[2mm]
&\quad\lesssim\||\nabla{\bf d}^\epsilon||\nabla\bar{\bf d}^\epsilon|{\bf d}^\epsilon\|_{L^1(\dot{B}_{p,1}^{\frac{4}{p}-\frac{1}{2}})}+
\||\nabla{\bf d}||\nabla\bar{\bf d}^\epsilon|{\bf d}^\epsilon\|_{L^1(\dot{B}_{p,1}^{\frac{4}{p}-\frac{1}{2}})}\\[2mm]
&\quad\lesssim\||\nabla{\bf d}^\epsilon|{\bf d}^\epsilon\|_{L^2(\dot{B}_{2,1}^{\frac{3}{2}})}
\|\nabla\bar{\bf d}^\epsilon\|_{L^2(\dot{B}_{p,1}^{\frac{4}{p}-\frac{1}{2}})}+
\||\nabla{\bf d}|{\bf d}^\epsilon\|_{L^2(\dot{B}_{2,1}^{\frac{3}{2}})}
\|\nabla\bar{\bf d}^\epsilon\|_{L^2(\dot{B}_{p,1}^{\frac{4}{p}-\frac{1}{2}})}\\[2mm]
&\quad\lesssim\|\nabla{\bf d}^\epsilon\|_{L^2(\dot{B}_{2,1}^{\frac{3}{2}})}
\|{\bf d}^\epsilon\|_{L^\infty(\dot{B}_{2,1}^{\frac{3}{2}})}
\|\bar{\bf d}^\epsilon\|_{L^2(\dot{B}_{p,1}^{\frac{4}{p}+\frac{1}{2}})}
+\|\nabla{\bf d}\|_{L^2(\dot{B}_{2,1}^{\frac{3}{2}})}
\|{\bf d}^\epsilon\|_{L^\infty(\dot{B}_{2,1}^{\frac{3}{2}})}
\|\bar{\bf d}^\epsilon\|_{L^2(\dot{B}_{p,1}^{\frac{4}{p}+\frac{1}{2}})}\\[2mm]
&\quad\lesssim{\Big(}\|{\bf d}^\epsilon\|_{L^2(\dot{B}_{2,1}^{\frac{5}{2}})}+
\|{\bf d}\|_{L^2(\dot{B}_{2,1}^{\frac{5}{2}})}{\Big)}
\|{\bf d}^\epsilon\|_{L^\infty(\dot{B}_{2,1}^{\frac{3}{2}})}
\|\bar{{\bf d}}^\epsilon\|_{L^\infty(\dot{B}_{p,1}^{\frac{4}{p}-\frac{1}{2}})}^{\frac{1}{2}}
\|\bar{{\bf d}}^\epsilon\|_{L^1(\dot{B}_{p,1}^{\frac{4}{p}+\frac{3}{2}})}^{\frac{1}{2}}.
\end{split}
\ee
It follows from the above estimates \eqref{5.34}-\eqref{5.39} that
\be\label{5.40}
\begin{split}
&\|\bar{\bf d}^\epsilon\|_{L^\infty(\dot{B}_{p,1}^{\frac{4}{p}-\frac{1}{2}})}+
\theta\|\bar{\bf d}^\epsilon\|_{L^1(\dot{B}_{p,1}^{\frac{4}{p}+\frac{3}{2}})}\\[2mm]
&\quad\lesssim \|\bar{\bf d}_0^\epsilon\|_{\dot{B}_{p,1}^{\frac{4}{p}-\frac{1}{2}}}+\|{\bf u}^\epsilon\|_{L^2(\dot{B}_{2,1}^{\frac{3}{2}})}
\|\bar{{\bf d}}^\epsilon\|_{L^\infty(\dot{B}_{p,1}^{\frac{4}{p}-\frac{1}{2}})}^{\frac{1}{2}}
\|\bar{{\bf d}}^\epsilon\|_{L^1(\dot{B}_{p,1}^{\frac{4}{p}+\frac{3}{2}})}^{\frac{1}{2}}\\[2mm]
&\quad\quad+\|{\bf w}^\epsilon\|_{L^\infty(\dot{B}_{p,1}^{\frac{4}{p}-\frac{3}{2}})}^{\frac{1}{2}}
\|{\bf w}^\epsilon\|_{L^1(\dot{B}_{p,1}^{\frac{4}{p}+\frac{1}{2}})}^{\frac{1}{2}}
\|{\bf d}\|_{L^2(\dot{B}_{2,1}^{\frac{5}{2}})}\\[2mm]
&\quad\quad+\|\mathcal{Q}{\bf u}^\epsilon\|_{L^2(\dot{B}_{p,1}^{\frac{4}{p}-\frac{1}{2}})}
\|{\bf d}\|_{L^2(\dot{B}_{2,1}^{\frac{5}{2}})}+\|{\bf d}
\|_{L^2(\dot{B}_{2,1}^{\frac{5}{2}})}^2
\|\bar{\bf d}^\epsilon\|_{L^\infty(\dot{B}_{p,1}^{\frac{4}{p}-\frac{1}{2}})}\\[2mm]
&\quad\quad+{\Big(}\|{\bf d}^\epsilon\|_{L^2(\dot{B}_{2,1}^{\frac{5}{2}})}+
\|{\bf d}\|_{L^2(\dot{B}_{2,1}^{\frac{5}{2}})}{\Big)}
\|{\bf d}^\epsilon\|_{L^\infty(\dot{B}_{2,1}^{\frac{3}{2}})}
\|\bar{{\bf d}}^\epsilon\|_{L^\infty(\dot{B}_{p,1}^{\frac{4}{p}-\frac{1}{2}})}^{\frac{1}{2}}
\|\bar{{\bf d}}^\epsilon\|_{L^1(\dot{B}_{p,1}^{\frac{4}{p}+\frac{3}{2}})}^{\frac{1}{2}}.
\end{split}
\ee
Now combining \eqref{5.33} with \eqref{5.40} and using Young inequality, we get
\be\label{5.41}
\begin{split}
&\|{\bf w}^\epsilon\|_{L^\infty(\dot{B}_{p,1}^{\frac{4}{p}-\frac{3}{2}})}+\frac{\mu}{2}\|{\bf w}^\epsilon\|_{L^1(\dot{B}_{p,1}^{\frac{4}{p}+\frac{1}{2}})}
+\|\bar{\bf d}^\epsilon\|_{L^\infty(\dot{B}_{p,1}^{\frac{4}{p}-\frac{1}{2}})}+
\frac{\theta}{2}\|\bar{\bf d}^\epsilon\|_{L^1(\dot{B}_{p,1}^{\frac{4}{p}+\frac{3}{2}})}\\[2mm]
&\quad\lesssim{\Big(}\|{\bf w}_0^\epsilon\|_{\dot{B}_{p,1}^{\frac{4}{p}-\frac{3}{2}}}
+\|\bar{\bf d}_0^\epsilon\|_{\dot{B}_{p,1}^{\frac{4}{p}-\frac{1}{2}}}{\Big)}
+C_0^{\epsilon\nu}\epsilon^{\frac{1}{2}-\frac{1}{p}}\\[2mm]
&\quad\quad+\int_0^t{\Big(}\|{\bf u}^\epsilon(t^\prime)\|_{\dot{B}_{2,1}^{\frac{3}{2}}}^2+\|{\bf u}(t^\prime)\|_{\dot{B}_{2,1}^{\frac{3}{2}}}^2+\|{\bf d}(t^\prime)\|_{\dot{B}_{2,1}^{\frac{5}{2}}}^2{\Big)}\|{\bf w}^\epsilon(t^\prime)\|_{\dot{B}_{p,1}^{\frac{4}{p}-\frac{3}{2}}}dt^\prime\\[2mm]
&\quad\quad+\int_0^t{\Big(}\|{\bf d}^\epsilon(t^\prime)\|_{\dot{B}_{2,1}^{\frac{5}{2}}}^2+\|{\bf d}(t^\prime)\|_{\dot{B}_{2,1}^{\frac{5}{2}}}^2+\|{\bf u}^\epsilon(t^\prime)\|_{\dot{B}_{2,1}^{\frac{3}{2}}}^2{\Big)}\|\bar{{\bf d}}^\epsilon(t^\prime)\|_{\dot{B}_{p,1}^{\frac{4}{p}-\frac{1}{2}}}dt^\prime.
\end{split}
\ee
Gronwall's inequality then yields that
\be\label{5.42}
\begin{split}
&\|{\bf w}^\epsilon\|_{L^\infty(\dot{B}_{p,1}^{\frac{4}{p}-\frac{3}{2}})}+\frac{\mu}{2}\|{\bf w}^\epsilon\|_{L^1(\dot{B}_{p,1}^{\frac{4}{p}+\frac{1}{2}})}
+\|\bar{\bf d}^\epsilon\|_{L^\infty(\dot{B}_{p,1}^{\frac{4}{p}-\frac{1}{2}})}+
\frac{\theta}{2}\|\bar{\bf d}^\epsilon\|_{L^1(\dot{B}_{p,1}^{\frac{4}{p}+\frac{3}{2}})}\\[2mm]
&\quad\lesssim{\Big(}\|{\bf w}_0^\epsilon\|_{\dot{B}_{p,1}^{\frac{4}{p}-\frac{3}{2}}}
+\|\bar{\bf d}_0^\epsilon\|_{\dot{B}_{p,1}^{\frac{4}{p}-\frac{1}{2}}}{\Big)}
+C_0^{\epsilon\nu}\epsilon^{\frac{1}{2}-\frac{1}{p}}.
\end{split}
\ee

{\bf Case 3}: $N=2$.  Applying Proposition \ref{pr5.1} to \eqref{5.5} with $\bar{p}=2,~\bar{r}=\infty,~s=0$ and $r=\frac{4p}{p-2}$ yields
\be\label{5.43}
\|(b^\epsilon,\mathcal{Q}{\bf u}^\epsilon)\|_{{L}^{\frac{4p}{p-2}}(\dot{B}_{p,1}^{\frac{3}{2p}-\frac{3}{4}})}
\lesssim \|(b^\epsilon,\mathcal{Q}{\bf u}^\epsilon)\|_{\tilde{L}^{\frac{4p}{p-2}}(\dot{B}_{p,1}^{\frac{3}{2p}-\frac{3}{4}})}
\lesssim C_0^{\epsilon\nu}\epsilon^{\frac{1}{4}-\frac{1}{2p}},~~~~~p\geq 2.
\ee
Use the following interpolation for $2\leq q<+\infty$,
$$
L^2{\Big(}\mathbb{R}^+; \dot{B}_{\frac{6q+4}{q+6}, 1}^{(14+q)/(6q+4)}{\Big)}
={\Big[}L^1(\mathbb{R}^+; \dot{B}_{2, 1}^{2});~{{L}^{\frac{4q}{q-2}}(\mathbb{R}^+; \dot{B}_{q,1}^{\frac{3}{2q}-\frac{3}{4}})}{\Big]}_{\frac{q+2}{3q+2}}.
$$
Make the change of parameter $p=\frac{6q+4}{q+6}$. Thanks  to estimate \eqref{5.43}, we conclude that
\be\label{5.44}
\|\mathcal{Q}{\bf u}^\epsilon\|_{L^2(\dot{B}_{p,1}^{\frac{5}{2p}-\frac{1}{4}})}\lesssim
C_0^{\epsilon\nu}\epsilon^{\frac{1}{4}-\frac{1}{2p}},~~~~2\leq p\leq 6.
\ee
Next, we are going  to prove the convergence of  ${\bf w}^\epsilon$ and $\bar{\bf d}^\epsilon$. For this purpose, it follows from Corollary \ref{co2.1} that
\be\label{5.46}
\begin{split}
&{\Big\|}\mathcal{P}{\Big(}\frac{\epsilon b^\epsilon}{1+\epsilon b^\epsilon}\mathcal{A}{\bf u}^\epsilon{\Big)}{\Big\|}_{L^1(\dot{B}_{p,1}^{\frac{5}{2p}-\frac{5}{4}})}
\lesssim
\epsilon\|b^\epsilon\|_{L^\infty(\dot{B}_{2,1}^{\frac{3}{4}+\frac{1}{2p}})}\|\mathcal{A}{\bf u}^\epsilon\|_{L^1(\dot{B}_{p,1}^{\frac{2}{p}-1})}\\[2mm]
&\quad\lesssim \epsilon(\epsilon\nu)^{-\frac{3}{4}-\frac{1}{2p}}\| b^\epsilon\|_{L^\infty(\dot{B}_{2,1}^{0})}^{\frac{1}{4}-\frac{1}{2p}}
(\epsilon\nu\| b^\epsilon\|_{L^\infty(\dot{B}_{2,1}^{1})})^{\frac{3}{4}+\frac{1}{2p}}\|{\bf u}^\epsilon\|_{L^1(\dot{B}_{2,1}^{2})}\\[2mm]
&\quad\lesssim\epsilon^{\frac{1}{4}-\frac{1}{2p}}
\|b^\epsilon\|_{L^\infty(\tilde{B}_{\epsilon\nu}^{1,\infty})}\|{\bf u}^\epsilon\|_{L^1(\dot{B}_{2,1}^{2})}
\lesssim (C_0^{\epsilon\nu})^2\epsilon^{\frac{1}{4}-\frac{1}{2p}},
\end{split}
\ee
\be\label{5.47}
\begin{split}
&\|\mathcal{P}({\bf u}^\epsilon\cdot\nabla{\bf u}^\epsilon-{\bf u}\cdot\nabla{\bf u})\|_{L^1(\dot{B}_{p,1}^{\frac{5}{2p}-\frac{5}{4}})}\\[2mm]
&\quad\leq C\|{\bf u}^\epsilon\|_{L^2(\dot{B}_{2,1}^{1})}
\|\nabla\mathcal{Q}{\bf u}^\epsilon\|_{L^2(\dot{B}_{p,1}^{\frac{5}{2p}-\frac{5}{4}})}
+C\|\nabla{\bf u}\|_{L^2(\dot{B}_{2,1}^{0})}
\|\mathcal{Q}{\bf u}^\epsilon\|_{L^2(\dot{B}_{p,1}^{\frac{5}{2p}-\frac{5}{4}})}\\[2mm]
&\quad\quad\D +C\int_0^t{\Big(}\|{\bf u}^\epsilon\|_{\dot{B}_{2,1}^{1}}\|\nabla{\bf w}^\epsilon\|_{\dot{B}_{p,1}^{\frac{5}{2p}-\frac{5}{4}}}
+\|\nabla{\bf u}\|_{\dot{B}_{2,1}^{0}}\|{\bf w}^\epsilon\|_{\dot{B}_{p,1}^{\frac{5}{2p}-\frac{1}{4}}}{\Big)}dt^\prime\\[2mm]
&\quad\leq C(C_0^{\epsilon\nu})^2\epsilon^{\frac{1}{4}-\frac{1}{2p}}
+\delta\|{\bf w}^\epsilon\|_{L^1(\dot{B}_{p,1}^{\frac{5}{2p}+\frac{3}{4}})}
+C_{\delta}\int_0^t{\Big(}\|{\bf u}^\epsilon\|_{\dot{B}_{2,1}^{1}}^2+\|{\bf u}\|_{\dot{B}_{2,1}^{1}}^2{\Big)}\|{\bf w}^\epsilon\|_{\dot{B}_{p,1}^{\frac{5}{2p}-\frac{5}{4}}}dt^\prime,
\end{split}
\ee

\be\label{5.48}
\begin{split}
&{\Big\|}\mathcal{P}{\Big(}\frac{\xi}{1+\epsilon b^\epsilon}{\rm div}(\nabla{\bf d}^\epsilon\odot\nabla{\bf d}^\epsilon)-\xi{\rm div}(\nabla{\bf d}\odot\nabla{\bf d}){\Big)}{\Big\|}_{L^1(\dot{B}_{p,1}^{\frac{5}{2p}-\frac{5}{4}})}\\[2mm]
&\quad\lesssim{\Big\|}\xi{\rm div}(\nabla{\bf d}^\epsilon\odot\nabla{\bf d}^\epsilon
-\nabla{\bf d}\odot\nabla{\bf d}){\Big\|}_{L^1(\dot{B}_{p,1}^{\frac{5}{2p}-\frac{5}{4}})}\\[2mm]
&\quad\quad+{\Big\|}\frac{\epsilon b^\epsilon}{1+\epsilon b^\epsilon}\xi{\rm div(\nabla{\bf d}^\epsilon\odot\nabla{\bf d}^\epsilon)}{\Big\|}_{L^1(\dot{B}_{p,1}^{\frac{5}{2p}-\frac{5}{4}})}:=K_1+K_2.
\end{split}
\ee
Attention is now focused on bounding $K_1$ and $K_2$. For the estimate of $K_1$, we have
\be\label{5.49}
\begin{split}
&K_1\lesssim\|\nabla{\bf d}^\epsilon+\nabla{\bf d}\|_{L^2(\dot{B}_{2,1}^{1})}\|\nabla\bar{\bf d}^\epsilon\|_{L^2(\dot{B}_{p,1}^{\frac{5}{2p}-\frac{1}{4}})}\\[2mm]
&\quad \lesssim{\Big(}\|{\bf d}^\epsilon\|_{L^2(\dot{B}_{2,1}^{2})}+\|{\bf d}\|_{L^2(\dot{B}_{2,1}^{2})}{\Big)}\|\bar{\bf d}^\epsilon\|_{L^2(\dot{B}_{p,1}^{\frac{5}{2p}+\frac{3}{4}})}.
\end{split}
\ee
For $K_2$, it follows that
\be\label{5.50}
\begin{split}
&K_2\lesssim\epsilon\|b^\epsilon\|_{L^\infty(\dot{B}_{2,1}^{\frac{3}{4}+\frac{1}{2p}})}{\Big\|}
{\rm div(\nabla({\bf d}^\epsilon-\hat{\bf d})\odot\nabla({\bf d}^\epsilon-\hat{\bf d}))}{\Big\|}_{L^1(\dot{B}_{p,1}^{\frac{2}{p}-1})}\\[2mm]
&\quad\lesssim \epsilon(\epsilon\nu)^{-\frac{3}{4}-\frac{1}{2p}}\| b^\epsilon\|_{L^\infty(\dot{B}_{2,1}^{0})}^{\frac{1}{4}-\frac{1}{2p}}
(\epsilon\nu\| b^\epsilon\|_{L^\infty(\dot{B}_{2,1}^{1})})^{\frac{3}{4}+\frac{1}{2p}}
\|{\bf d}^\epsilon-\hat{\bf d}\|_{L^2(\dot{B}_{p,1}^{\frac{2}{p}+1})}^2\\[2mm]
&\quad\lesssim\epsilon^{\frac{1}{4}-\frac{1}{2p}}
\|b^\epsilon\|_{L^\infty(\tilde{B}_{\epsilon\nu}^{1,\infty})}
\|{\bf d}^\epsilon-\hat{\bf d}\|_{L^2(\dot{B}_{2,1}^{2})}^2
\lesssim (C_0^{\epsilon\nu})^3\epsilon^{\frac{1}{4}-\frac{1}{2p}}.
\end{split}
\ee
Therefore, in view of the estimates of heat equation, similar to \eqref{5.15}, we have
\be\label{5.51}
\begin{split}
&\|{\bf w}^\epsilon\|_{L^\infty(\dot{B}_{p,1}^{\frac{5}{2p}-\frac{5}{4}})}+\frac{3}{4}\mu\|{\bf w}^\epsilon\|_{L^1(\dot{B}_{p,1}^{\frac{5}{2p}+\frac{3}{4}})}\\[2mm]
&\quad\lesssim\|{\bf w}_0^\epsilon\|_{\dot{B}_{p,1}^{\frac{5}{2p}-\frac{5}{4}}}
+C_0^{\epsilon\nu}\epsilon^{\frac{1}{4}-\frac{1}{2p}}
+\int_0^t{\Big(}\|{\bf u}^\epsilon\|_{\dot{B}_{2,1}^{1}}^2+\|{\bf u}\|_{\dot{B}_{2,1}^{1}}^2{\Big)}\|{\bf w}^\epsilon\|_{\dot{B}_{p,1}^{\frac{5}{2p}-\frac{5}{4}}}dt^\prime\\[2mm]
&\quad\quad+{\Big(}\|{\bf d}^\epsilon\|_{L^2(\dot{B}_{2,1}^{2})}+\|{\bf d}\|_{L^2(\dot{B}_{2,1}^{2})}{\Big)}\|\bar{\bf d}^\epsilon\|_{L^2(\dot{B}_{p,1}^{\frac{5}{2p}+\frac{3}{4}})}.
\end{split}
\ee
  In order to bound $\|\bar{\bf d}^\epsilon\|_{L^2(\dot{B}_{p,1}^{\frac{5}{2p}+\frac{3}{4}})}$,  similar to \eqref{5.16}, we have
\be\label{5.52}
\begin{split}
&\|\bar{\bf d}^\epsilon\|_{L^\infty(\dot{B}_{p,1}^{\frac{5}{2p}-\frac{1}{4}})}+
\theta\|\bar{\bf d}^\epsilon\|_{L^1(\dot{B}_{p,1}^{\frac{5}{2p}+\frac{7}{4}})}\\[2mm]
&\quad\lesssim \|\bar{\bf d}_0^\epsilon\|_{\dot{B}_{p,1}^{\frac{5}{2p}-\frac{1}{4}}}
+\|{\bf u}^\epsilon\cdot\nabla\bar{{\bf d}}^\epsilon\|_{L^1(\dot{B}_{p,1}^{\frac{5}{2p}-\frac{1}{4}})}
+\|{\bf w}^\epsilon\cdot\nabla{\bf d}\|_{L^1(\dot{B}_{p,1}^{\frac{5}{2p}-\frac{1}{4}})}+\|\mathcal{Q}{\bf u}^\epsilon\cdot\nabla{\bf d}\|_{L^1(\dot{B}_{p,1}^{\frac{5}{2p}-\frac{1}{4}})}\\[2mm]
&\quad\quad+\||\nabla {\bf d}|^2
 \bar{\bf d}^\epsilon\|_{L^1(\dot{B}_{p,1}^{\frac{5}{2p}-\frac{1}{4}})}
+\||\nabla{\bf d}^\epsilon+\nabla{\bf d}||\nabla\bar{\bf d}^\epsilon|{\bf d}^\epsilon\|_{L^1(\dot{B}_{p,1}^{\frac{5}{2p}-\frac{1}{4}})}.
\end{split}
\ee
Next, we estimate the terms of the right hand of \eqref{5.52}
by Corollary \ref{co2.1} and Lemma \ref{le2.2} as follows.
\be\label{5.53}
\begin{split}
\|{\bf u}^\epsilon\cdot\nabla\bar{{\bf d}}^\epsilon\|_{L^1(\dot{B}_{p,1}^{\frac{5}{2p}-\frac{1}{4}})}
&\lesssim \|{\bf u}^\epsilon\|_{L^2(\dot{B}_{2,1}^{1})}
\|\nabla\bar{{\bf d}}^\epsilon\|_{L^2(\dot{B}_{p,1}^{\frac{5}{2p}-\frac{1}{4}})}\\[2mm]
&\lesssim \|{\bf u}^\epsilon\|_{L^2(\dot{B}_{2,1}^{1})}
\|\bar{{\bf d}}^\epsilon\|_{L^\infty(\dot{B}_{p,1}^{\frac{5}{2p}-\frac{1}{4}})}^{\frac{1}{2}}
\|\bar{{\bf d}}^\epsilon\|_{L^1(\dot{B}_{p,1}^{\frac{5}{2p}+\frac{7}{4}})}^{\frac{1}{2}},
\end{split}
\ee
\be\label{5.54}
\begin{split}
\|{\bf w}^\epsilon\cdot\nabla{\bf d}\|_{L^1(\dot{B}_{p,1}^{\frac{5}{2p}-\frac{1}{4}})}&\lesssim\|{\bf w}^\epsilon\|_{L^2(\dot{B}_{p,1}^{\frac{5}{2p}-\frac{1}{4}})}
\|\nabla{\bf d}\|_{L^2(\dot{B}_{2,1}^{1})}\\[2mm]
&\lesssim\|{\bf w}^\epsilon\|_{L^\infty(\dot{B}_{p,1}^{\frac{5}{2p}-\frac{5}{4}})}^{\frac{1}{2}}
\|{\bf w}^\epsilon\|_{L^1(\dot{B}_{p,1}^{\frac{5}{2p}+\frac{3}{4}})}^{\frac{1}{2}}
\|{\bf d}\|_{L^2(\dot{B}_{2,1}^{2})},
\end{split}
\ee
\be\label{5.55}
\begin{split}
\|\mathcal{Q}{\bf u}^\epsilon\cdot\nabla{\bf d}\|_{L^1(\dot{B}_{p,1}^{\frac{5}{2p}-\frac{1}{4}})}
&\lesssim\|\mathcal{Q}{\bf u}^\epsilon\|_{L^2(\dot{B}_{p,1}^{\frac{5}{2p}-\frac{1}{4}})}
\|\nabla{\bf d}\|_{L^2(\dot{B}_{2,1}^{1})}
\lesssim C_0^{\epsilon\nu}\epsilon^{\frac{1}{4}-\frac{1}{2p}}
\|{\bf d}\|_{L^2(\dot{B}_{2,1}^{2})},
\end{split}
\ee
\be\label{5.56}
\begin{split}
\||\nabla {\bf d}|^2\bar{\bf d}^\epsilon\|_{L^1(\dot{B}_{p,1}^{\frac{5}{2p}-\frac{1}{4}})}
&\lesssim\||\nabla {\bf d}|^2
 \|_{L^1(\dot{B}_{2,1}^{1})}
 \|\bar{\bf d}^\epsilon\|_{L^\infty(\dot{B}_{p,1}^{\frac{5}{2p}-\frac{1}{4}})}
\lesssim\|{\bf d}
 \|_{L^2(\dot{B}_{2,1}^{2})}^2
 \|\bar{\bf d}^\epsilon\|_{L^\infty(\dot{B}_{p,1}^{\frac{5}{2p}-\frac{1}{4}})},
\end{split}
\ee
\be\label{5.57}
\begin{split}
&\||\nabla{\bf d}^\epsilon+\nabla{\bf d}||\nabla\bar{\bf d}^\epsilon|{\bf d}^\epsilon\|_{L^1(\dot{B}_{p,1}^{\frac{5}{2p}-\frac{1}{4}})}\\[2mm]
&\quad\lesssim\||\nabla{\bf d}^\epsilon||\nabla\bar{\bf d}^\epsilon|{\bf d}^\epsilon\|_{L^1(\dot{B}_{p,1}^{\frac{5}{2p}-\frac{1}{4}})}+
\||\nabla{\bf d}||\nabla\bar{\bf d}^\epsilon|{\bf d}^\epsilon\|_{L^1(\dot{B}_{p,1}^{\frac{5}{2p}-\frac{1}{4}})}\\[2mm]
&\quad\lesssim\||\nabla{\bf d}^\epsilon|{\bf  d}^\epsilon\|_{L^2(\dot{B}_{2,1}^{1})}
\|\nabla\bar{\bf d}^\epsilon\|_{L^2(\dot{B}_{p,1}^{\frac{5}{2p}-\frac{1}{4}})}+
\||\nabla{\bf d}|{\bf d}^\epsilon\|_{L^2(\dot{B}_{2,1}^{1})}
\|\nabla\bar{\bf d}^\epsilon\|_{L^2(\dot{B}_{p,1}^{\frac{5}{2p}-\frac{1}{4}})}\\[2mm]
&\quad\lesssim\|\nabla{\bf d}^\epsilon\|_{L^2(\dot{B}_{2,1}^{1})}
\|{\bf d}^\epsilon\|_{L^\infty(\dot{B}_{2,1}^{1})}
\|\bar{\bf d}^\epsilon\|_{L^2(\dot{B}_{p,1}^{\frac{5}{2p}+\frac{3}{4}})}+\|\nabla{\bf d}\|_{L^2(\dot{B}_{2,1}^{1})}
\|{\bf d}^\epsilon\|_{L^\infty(\dot{B}_{2,1}^{1})}
\|\bar{\bf d}^\epsilon\|_{L^2(\dot{B}_{p,1}^{\frac{5}{2p}+\frac{3}{4}})}\\[2mm]
&\quad\lesssim{\Big(}\|{\bf d}^\epsilon\|_{L^2(\dot{B}_{2,1}^{2})}+
\|{\bf d}\|_{L^2(\dot{B}_{2,1}^{2})}{\Big)}
\|{\bf d}^\epsilon\|_{L^\infty(\dot{B}_{2,1}^{1})}
\|\bar{{\bf d}}^\epsilon\|_{L^\infty(\dot{B}_{p,1}^{\frac{5}{2p}-\frac{1}{4}})}^{\frac{1}{2}}
\|\bar{{\bf d}}^\epsilon\|_{L^1(\dot{B}_{p,1}^{\frac{5}{2p}+\frac{7}{4}})}^{\frac{1}{2}}.
\end{split}
\ee
It follows from the above estimates \eqref{5.52}-\eqref{5.57} that
\be\label{5.58}
\begin{split}
&\|\bar{\bf d}^\epsilon\|_{L^\infty(\dot{B}_{p,1}^{\frac{5}{2p}-\frac{1}{4}})}+
\theta\|\bar{\bf d}^\epsilon\|_{L^1(\dot{B}_{p,1}^{\frac{5}{2p}+\frac{7}{4}})}\\[2mm]
&\quad\lesssim \|\bar{\bf d}_0^\epsilon\|_{\dot{B}_{p,1}^{\frac{5}{2p}-\frac{1}{4}}}+\|{\bf u}^\epsilon\|_{L^2(\dot{B}_{2,1}^{1})}
\|\bar{{\bf d}}^\epsilon\|_{L^\infty(\dot{B}_{p,1}^{\frac{5}{2p}-\frac{1}{4}})}^{\frac{1}{2}}
\|\bar{{\bf d}}^\epsilon\|_{L^1(\dot{B}_{p,1}^{\frac{5}{2p}+\frac{7}{4}})}^{\frac{1}{2}}\\[2mm]
&\quad\quad+\|{\bf w}^\epsilon\|_{L^\infty(\dot{B}_{p,1}^{\frac{5}{2p}-\frac{5}{4}})}^{\frac{1}{2}}
\|{\bf w}^\epsilon\|_{L^1(\dot{B}_{p,1}^{\frac{5}{2p}+\frac{3}{4}})}^{\frac{1}{2}}
\|{\bf d}\|_{L^2(\dot{B}_{2,1}^{2})}\\[2mm]
&\quad\quad+C_0^{\epsilon\nu}\epsilon^{\frac{1}{4}-\frac{1}{2p}}
\|{\bf d}\|_{L^2(\dot{B}_{2,1}^{2})}+\|{\bf d}
 \|_{L^2(\dot{B}_{2,1}^{2})}^2
 \|\bar{\bf d}^\epsilon\|_{L^\infty(\dot{B}_{p,1}^{\frac{5}{2p}-\frac{1}{4}})}\\[2mm]
&\quad\quad+{\Big(}\|{\bf d}^\epsilon\|_{L^2(\dot{B}_{2,1}^{2})}+
\|{\bf d}\|_{L^2(\dot{B}_{2,1}^{2})}{\Big)}
\|{\bf d}^\epsilon\|_{L^\infty(\dot{B}_{2,1}^{1})}
\|\bar{{\bf d}}^\epsilon\|_{L^\infty(\dot{B}_{p,1}^{\frac{5}{2p}-\frac{1}{4}})}^{\frac{1}{2}}
\|\bar{{\bf d}}^\epsilon\|_{L^1(\dot{B}_{p,1}^{\frac{5}{2p}+\frac{7}{4}})}^{\frac{1}{2}}.
\end{split}
\ee
Now combining \eqref{5.51} with \eqref{5.58} and using Young inequality, we get
\be\nonumber
\begin{split}
&\|{\bf w}^\epsilon\|_{L^\infty(\dot{B}_{p,1}^{\frac{5}{2p}-\frac{5}{4}})}+\frac{\mu}{2}\|{\bf w}^\epsilon\|_{L^1(\dot{B}_{p,1}^{\frac{5}{2p}+\frac{3}{4}})}
+\|\bar{\bf d}^\epsilon\|_{L^\infty(\dot{B}_{p,1}^{\frac{5}{2p}-\frac{1}{4}})}+
\frac{\theta}{2}\|\bar{\bf d}^\epsilon\|_{L^1(\dot{B}_{p,1}^{\frac{5}{2p}+\frac{7}{4}})}\\[2mm]
&\quad\lesssim{\Big(}\|{\bf w}_0^\epsilon\|_{\dot{B}_{p,1}^{\frac{5}{2p}-\frac{5}{4}}}
+\|\bar{\bf d}_0^\epsilon\|_{\dot{B}_{p,1}^{\frac{5}{2p}-\frac{1}{4}}}{\Big)}
+C_0^{\epsilon\nu}\epsilon^{\frac{1}{4}-\frac{1}{2p}}\\[2mm]
&\quad\quad+\int_0^t{\Big(}\|{\bf u}^\epsilon(t^\prime)\|_{\dot{B}_{2,1}^{1}}^2+\|{\bf u}(t^\prime)\|_{\dot{B}_{2,1}^{1}}^2+\|{\bf d}(t^\prime)\|_{\dot{B}_{2,1}^{2}}^2{\Big)}\|{\bf w}^\epsilon(t^\prime)\|_{\dot{B}_{p,1}^{\frac{5}{2p}-\frac{5}{4}}}dt^\prime\\[2mm]
&\quad\quad+\int_0^t{\Big(}\|{\bf d}^\epsilon(t^\prime)\|_{\dot{B}_{2,1}^{2}}^2+\|{\bf d}(t^\prime)\|_{\dot{B}_{2,1}^{2}}^2+\|{\bf u}^\epsilon(t^\prime)\|_{\dot{B}_{2,1}^{1}}^2{\Big)}\|\bar{{\bf d}}^\epsilon(t^\prime)\|_{\dot{B}_{p,1}^{\frac{5}{2p}-\frac{1}{4}}}dt^\prime,
\end{split}
\ee
which together with Gronwall's lemma yields that
\be\nonumber
\begin{split}
&\|{\bf w}^\epsilon\|_{L^\infty(\dot{B}_{p,1}^{\frac{5}{2p}-\frac{5}{4}})}+\frac{\mu}{2}\|{\bf w}^\epsilon\|_{L^1(\dot{B}_{p,1}^{\frac{5}{2p}+\frac{3}{4}})}
+\|\bar{\bf d}^\epsilon\|_{L^\infty(\dot{B}_{p,1}^{\frac{5}{2p}-\frac{1}{4}})}+
\frac{\theta}{2}\|\bar{\bf d}^\epsilon\|_{L^1(\dot{B}_{p,1}^{\frac{5}{2p}+\frac{7}{4}})}\\[2mm]
&\quad\lesssim{\Big(}\|{\bf w}_0^\epsilon\|_{\dot{B}_{p,1}^{\frac{5}{2p}-\frac{5}{4}}}
+\|\bar{\bf d}_0^\epsilon\|_{\dot{B}_{p,1}^{\frac{5}{2p}-\frac{1}{4}}}{\Big)}
+C_0^{\epsilon\nu}\epsilon^{\frac{1}{4}-\frac{1}{2p}}.
\end{split}
\ee
Thus the proof of Theorem $\ref{th1.2}$ is completed.\hspace{8cm}$\Box$

\end{document}